\numberwithin{equation}{section}
\newtheorem{thm}{Theorem}[section]
\newtheorem{cor}{Corollary}[section]
\newtheorem{pro}{Proposition}[section]
\newtheorem{lemma}{Lemma}[section]
\begin{document}
	
	\markboth{T. Anitha, R. Rajkumar and Andrei Gagarin}{}
	\title{The complement of proper power graphs of finite groups}
	
	\author{T. Anitha\footnote{e-mail: {\tt tanitha.maths@gmail.com}},\ \ \
		R. Rajkumar\footnote{e-mail: {\tt rrajmaths@yahoo.co.in}}\\
		{\footnotesize Department of Mathematics, The Gandhigram Rural Institute -- Deemed to be University,}\\ \footnotesize{Gandhigram -- 624 302, Tamil Nadu, India.}\\[3mm]
Andrei Gagarin\footnote{e-mail: {\tt gagarina@cardiff.ac.uk}}\\
	{\footnotesize School of Mathematics,
		Cardiff University,}\\ \footnotesize{Cardiff, CF24 4AG,
		Wales, UK}\\[3mm]
}
	\date{}
	\maketitle

\begin{abstract}
For a finite group $G$, the proper power graph $\mathscr{P}^*(G)$ of $G$ is the graph whose vertices are non-trivial elements of $G$ and two vertices $u$ and $v$ are adjacent if and only if $u \neq v$ and $u^m=v$ or $v^m=u$ for some positive integer $m$. In this paper, we consider the complement of $\mathscr{P}^*(G)$, denoted
by ${\overline{\mathscr{P}^*(G)}}$. We classify all finite groups whose complement of proper power graphs is complete, bipartite, a path, a cycle, a star, claw-free, triangle-free, disconnected, planar, outer-planar, toroidal, or projective. Among the other results, we also determine the diameter and girth of the complement of proper power graphs of finite groups.
\paragraph{Keywords:} Complement of power graph, finite groups, diameter, girth, bipartite graph, planar graph, toroidal graph,  projective-planar graph.
\paragraph{2010 Mathematics Subject Classification:} 05C25,  05C10.
\end{abstract}

	\section{Introduction}
The investigation of properties of a given algebraic structure can be made by associating it with a suitable graph, and then by analyzing the graph properties using methods of graph theory. This approach have been used in a great amount of literature, for example, see \cite{Abdollahi},  \cite{behoodi}, \cite{eleo}. Moreover, there are several recent papers dealing with embeddability of graphs associated with algebraic structures on topological surfaces. For instance, see \cite{afkhami}, \cite{hung}, \cite{raj}, \cite{raj2}, \cite{raj5 }. Kelarev and Quinn \cite{Kelarev} introduced and studied the directed power graph of a semigroup. The \textit{directed power graph} of a semigroup $S$ is a digraph having the vertex set $S$, and, for $u,v\in S$, there is an arc from $u$ to $v$ if and only if $u\neq v$ and $v=u^m$ for some positive integer $m$. Later, Chakrabarty et al. \cite{Chakrabarty} defined the \textit{undirected power graph} $\mathscr{P}(G)$ of a group $G$ as an undirected graph whose vertex set is $G$, and two vertices $u$ and $v$ are adjacent if and only if $u\neq v$ and $u^m=v$ or $v^m=u$ for some positive integer $m$. Recently, several interesting results have been obtained for these graphs. For instance, see \cite{Min}, \cite{Sriparna}. Mirzargar et al. \cite{Mirzargar} investigated planarity of the undirected power graph of finite groups, and  Xuanlong Ma and Kaishun Wang \cite{Xuanlong} classified all finite groups whose undirected power graphs can be embedded on the torus.

Further, in \cite{Mogha}, Moghaddamfar et at. considered the graph $\mathscr{P}^*(G)$, which is obtained by removing the identity element from the undirected power graph $\mathscr{P}(G)$  of a given group $G$. This graph is called the \emph{undirected proper power graph of $G$}. They have studied several properties of these graphs, including the classification of finite groups whose undirected proper power graphs are one of strongly regular, bipartite, planar, or Eulerian. Later, in \cite{Doostabadi}, Doostabedi and Farroki have investigated various kinds of planarity, toroidality, and projective-planarity of these graphs. An interested reader may refer to the survey \cite{survey} for further results and open problems related to the power graph of groups and semigroups. In this paper, we consider only the undirected graphs, and, for simplicity, use the term `power graph' to refer to the undirected power graph.

In this paper, we study the properties of complement of the proper power graph of a group. For a given group $G$, the \textit{complement of the proper power graph of $G$}, denoted by ${\overline{\mathscr{P}^*(G)}}$, is a graph whose vertex set is the set of all nontrivial elements of $G$, and two vertices $u$ and $v$ are adjacent if and only if $u \neq v$, and $u^m\neq v$ and $v^n\neq u$ for any positive integers $m$, $n$; in other words $u$ and $v$ are adjacent if and only if $u \neq v$, $u\notin \left\langle v\right\rangle $ and $v\notin \left\langle u\right\rangle $.

The rest of the paper is organized as follows.
In Section 2, we provide some preliminaries and notations.
In Section 3, we classify all finite groups whose complement of proper power graph is complete, bipartite, $C_3$-free, $K_{1,3}$-free, disconnected, or having isolated vertices. Moreover, in this section, we determine the girth and diameter of the complement of proper power graphs of finite groups. In Section 4, we classify all finite groups whose complement of proper power graphs is planar, toroidal, or projective-planar. As a consequence, we classify the finite groups whose complement of proper power graphs is a path, a star, a cycle, outer-planar, or having no subgraphs $K_{1,4}$ or $K_{2,3}$.


\section{Preliminaries and notations} \label{sec:pre} In this section, we remind some concepts, notation, and results in graph theory and group theory. We follow the terminology and
notation of  \cite{harary, White} for graphs and \cite{Scott} for groups.
A graph $G$ is said to be \emph {complete} if there is an edge between every pair of its distinct vertices. $G$ is said to be \emph {$k$-partite} if the vertex set of $G$ can be partitioned to $k$ subsets, called \emph{parts} of $G$, such that no two vertices in the same subset of the partition are adjacent. A \emph {complete $k$-partite} graph, denoted by $K_{n_1, n_2, \ldots, n_k}$, is a $k$-partite graph having its parts sizes $n_1, n_2, \ldots, n_k $ such that  every vertex in each part is adjacent to all the vertices in the other parts of $K_{n_1, n_2, \ldots, n_k}$. For simplicity, we denote the complete $k$-partite  graph $K_{n, n, \cdots, n}$ by $K(k,n)$. The graph $K_{1,n}$ is called a \emph{star}. $P_n$ and $C_n$ respectively denote the \emph  {path} and \emph {cycle} on $n$ vertices. We denote  the degree of  a vertex $v$ in $G$ by $deg_G(v)$. $G$ is said to be \emph{$H$-free}, if it has no induced subgraph isomorphic to $H$.

$G$ is said to be \emph {connected} if there exists a path between any two distinct vertices in the graph; otherwise $G$ is said to be \emph {disconnected}. The distance between two vertices $u$ and $v$ of a graph, denoted by \emph{$d(u,v)$}, is the length of a shortest path between $u$ and $v$ in the graph, if such a path exists, and $\infty$ otherwise. The \emph{diameter} of a connected graph $G$ is the maximum distance between any two vertices in the graph, and is denoted by $diam(G)$.  The number of edges in a path or a cycle, is called its \emph{length}.  The \emph{girth} of $G$ is the minimum of the lengths of all cycles in $G$, and is denoted by \emph{gr$(G)$}. If $G$ is acyclic, that is, if $G$ has no cycles, then we write gr$(G)=\infty$.  The \textit{complement} $\overline{G}$ of $G$ is a
graph, which has the vertices of $G$ as its vertex set, and two vertices in $\overline{G}$ are adjacent if and only if they are not adjacent in $G$.
Given two simple graphs, $G_1 =(V_1, E_1)$ and $G_2 = (V_2, E_2)$,  their \textit{union}, denoted by $G_1 \cup G_2$, is a graph with
the vertex set $V_1 \cup V_2$ and the edge set $E_1 \cup E_2$. Their \textit{join}, denoted by $G_1 + G_2$, is a graph having  $G_1 \cup G_2$ together with all the edges
joining points of $V_1$ to points of $V_2$.

A graph is said to be \textit{embeddable} on a topological surface if it can be drawn on the surface in such a way that no two edges cross.  The \emph{orientable genus} or \emph{genus} of a graph $G$, denoted by $\gamma(G)$, is the smallest non-negative integer $n$
such that  $G$ can be embedded on the sphere with $n$ handles. $G$ is said to be \emph{planar} or \emph{toroidal}, respectively, when $\gamma(G)$ is either $0$ or $1$. A planar graph $G$ is said to be \emph{outer-planar} if it can be drawn in the plane with all its vertices lying on the same face.

A \emph{crosscap} is a circle (on a surface) such that all its pairs of opposite points are identified, and the interior of this circle  is removed.  The \emph{nonorientable} genus of $G$, denoted by $\overline{\gamma}(G)$, is the smallest integer $k$ such that $G$ can be embedded on the sphere with $k$ crosscaps. $G$ is said to be \emph{projective} or \emph{projective-planar} if $\overline{\gamma}(G)=1$.   Clearly, if $G'$ is a subgraph of $G$, then $\gamma(G')\leq\gamma (G)$ and $\overline{\gamma}(G')\leq\overline{\gamma} (G)$.

Let $G$ be a group. The order of an element $x$ in $G$ is denoted by $o(x)$. For a positive integer $n$, $\varphi(n)$ denotes the Euler's totient function of $n$. For any integer $n\geq 3$, the dihedral group of order $2n$ is given by $D_{2n}=\left\langle a,b| a^n = b^2 = e, ab = ba^{-1} \right\rangle  $. For any integer $n \geq 2$, the quarternion group of order $4n$ is given by $Q_{4n}=\left\langle a,b| a^{2n}=b^4=1,b^2=a^n, ab=ba^{-1}  \right\rangle $.  For any $\alpha\geq 3$ and a prime $p$, the modular group of order $p^{\alpha}$ is given by $M_{p^{\alpha}}=\langle a,b| a^{p^{\alpha-1}}=b^p=1,~bab^{-1}=a^{p^{\alpha-2}}+1 \rangle $. Throughout this paper, $p$, $q$ denotes distinct prime numbers.

The following results are used in the subsequent sections.
\begin{thm}(\cite[Theorem 6.6]{White})
	A graph $G$ is planar if and only if $G$
	contains no subgraphs homeomorphic to $K_5$ or $K_{3,3}$.
\end{thm}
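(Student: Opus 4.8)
This is Kuratowski's theorem, so the plan is to follow the classical two‑part argument: an easy necessity direction, then sufficiency by induction on the number of edges after reducing to the $3$‑connected case.

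\textbf{Necessity.} First I would check that $K_5$ and $K_{3,3}$ are themselves non-planar. By Euler's formula a planar graph on $n\geq 3$ vertices has at most $3n-6$ edges, and a bipartite planar graph on $n\geq 3$ vertices at most $2n-4$ edges; since $K_5$ has $10>3\cdot 5-6$ edges and $K_{3,3}$ has $9>2\cdot 6-4$ edges, neither is planar. Planarity passes to subgraphs, and inserting or suppressing a vertex of degree $2$ affects neither planarity nor non-planarity, so any graph containing a subgraph homeomorphic to $K_5$ or $K_{3,3}$ is non-planar. This gives the ``only if'' direction.

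\textbf{Reduction to the $3$-connected case.} For the converse, suppose the conclusion fails and choose a non-planar graph $G$ with no subgraph homeomorphic to $K_5$ or $K_{3,3}$ and with $|E(G)|$ minimum. Since a graph is planar exactly when each of its blocks is, and a homeomorph of $K_5$ or $K_{3,3}$ is $2$-connected and hence contained in one block, $G$ is $2$-connected. If $\{u,v\}$ were a $2$-cut, write $G=G_1\cup G_2$ with $G_1\cap G_2=\{u,v\}$ and set $H_i=G_i+uv$. Each $H_i$ has fewer edges than $G$ and still contains no homeomorph of $K_5$ or $K_{3,3}$, because such a subgraph using the edge $uv$ would produce one in $G$ after rerouting $uv$ along a $u$--$v$ path in the other part. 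By minimality each $H_i$ is planar; gluing planar embeddings of $H_1$ and $H_2$ along the edge $uv$ embeds $G$ in the plane, a contradiction. Hence $G$ is $3$-connected, and, being non-planar, it has at least $5$ vertices.

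\textbf{The $3$-connected case and the main obstacle.} Here I would invoke Tutte's lemma that a $3$-connected graph on at least $5$ vertices has an edge $e=xy$ whose contraction $G/e$ is again $3$-connected; call the new vertex $v_e$. Then $G/e$ has fewer edges than $G$ and still contains no homeomorph of $K_5$ or $K_{3,3}$ -- a subdivision in $G/e$ meeting $v_e$ lifts to one in $G$ after a short case check on how the branches at $v_e$ split between $x$ and $y$ (the extremal case producing a $K_{3,3}$-subdivision rather than a $K_5$-subdivision) -- so by minimality $G/e$ is planar. Fix a planar embedding of $G/e$. Since $G/e$ is $3$-connected, $G/e-v_e=G-\{x,y\}$ is $2$-connected, so in the induced embedding the face that contained $v_e$ is bounded by a cycle $C$ carrying all of $N_{G/e}(v_e)=X\cup Y$, where $X=N_G(x)\setminus\{y\}$ and $Y=N_G(y)\setminus\{x\}$. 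If some arc of $C$ between two consecutive vertices of $X$ contains all of $Y$, then $x$ and $y$, together with the edge $xy$ and all edges issuing from $x$ and from $y$, can be drawn inside that face without crossings, yielding a planar embedding of $G$ -- contradiction. Otherwise the vertices of $X$ and $Y$ interleave around $C$, and combining suitable arcs of $C$ with the edges at $x$, the edges at $y$, and the edge $xy$ exhibits inside $G$ a subdivision of $K_{3,3}$ (built from two common neighbours of $x$ and $y$ on $C$ and interleaving vertices of $X$ and of $Y$) or of $K_5$ (built from three common neighbours of $x$ and $y$ on $C$) -- again a contradiction. This closes the induction. The routine parts are the necessity direction and the block/$2$-cut reductions; the delicate parts -- which I would isolate as separate lemmas (Tutte's contractible-edge lemma, the lifting claim, and the final configuration analysis) -- are showing that contraction creates no new Kuratowski subdivision and that an embedding of $G/e$ which refuses to extend always exposes a subdivision of $K_5$ or $K_{3,3}$.
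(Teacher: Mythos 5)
This statement is Kuratowski's theorem, which the paper does not prove at all: it is quoted as a known preliminary result from White's book, so there is no in-paper argument to compare yours against. Your outline is the standard classical proof (essentially Thomassen's): necessity via the Euler-formula edge bounds and invariance of planarity under subdivision, reduction of a minimal counterexample to the $3$-connected case through blocks and $2$-cuts, and then Tutte's contractible-edge lemma together with the analysis of how the neighbours of the two contracted ends sit on the face cycle $C$ of the embedding of $G/e$. The structure is correct and you have isolated the genuinely delicate steps in the right places (the lifting of a Kuratowski subdivision from $G/e$ to $G$, including the case where a $K_5$-subdivision in $G/e$ lifts only to a $K_{3,3}$-subdivision in $G$, and the final interleaving argument on $C$). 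Two small points you would need to make explicit in a full write-up: in the $2$-cut reduction, rerouting the edge $uv$ uses the fact that each side $G_i$ contains a $u$--$v$ path (which follows from $2$-connectedness), and the gluing of the two planar embeddings of $H_1$ and $H_2$ requires first choosing embeddings in which the edge $uv$ lies on the outer face. With those details supplied, the argument is complete; it is simply a self-contained proof of a result the paper imports as background.
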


\begin{thm}{(\cite[Theorems 6.37, 6.38, 11.19, 11.23]{White})}\label{gamma(K_n)}
	\begin{enumerate}[{\normalfont (1)}]
		\item $\gamma(K_n)=\left \lceil \frac {(n-3)(n-4)}{12} \right \rceil $ , $n\geq 3$
		\item $\gamma(K_{m,n})=\left \lceil\frac{(m-2)(n-2)}{4} \right \rceil$ , $n,m\geq 2$
		\item $\overline{\gamma}(K_n)=\left \lceil \frac {(n-3)(n-4)}{6} \right \rceil$ , $n\geq 3$, $n\neq 7$; $\gamma(K_n)=3$ if $n=7$.
		\item $\overline{\gamma}(K_{m,n})=\left \lceil\frac{(m-2)(n-2)}{2} \right \rceil$ , $n,m\geq 2$.
	\end{enumerate}
	As a consequence,  $\gamma(K_n)>1$ for $n\geq 8$, $\overline{\gamma}(K_n)>1$ for $n\geq 7$,
	$\gamma(K_{m,n})>1$ if either $m\geq 4,~n\geq 5$ or $m\geq 3,~n\geq 7$, and $\overline{\gamma}(K_{m,n})>1$  if either $m\geq 3$, $n\geq 5$ or $m=n=4$.
\end{thm}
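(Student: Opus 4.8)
The plan is to prove the four formulas by the standard two-sided argument: an Euler-formula lower bound valid for \emph{any} embedding, matched by an explicit construction that attains it.

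\textbf{Lower bounds.} Suppose a connected graph with $V$ vertices and $E$ edges is embedded on the orientable surface of genus $g$, so that Euler's formula gives $V-E+F=2-2g$, where $F$ is the number of faces. Every face is bounded by a closed walk of length at least the girth $\ell$, and every edge lies on the boundary of at most two faces, so $\ell F\le 2E$, i.e. $F\le 2E/\ell$. Substituting, $2-2g=V-E+F\le V-E+2E/\ell$, hence $g\ge 1-V/2+E(1/2-1/\ell)$, and since $g$ is an integer we round up. For $K_n$ we have $V=n$, $E=\binom{n}{2}$, $\ell=3$, which yields $\gamma(K_n)\ge\lceil(n-3)(n-4)/12\rceil$; for $K_{m,n}$ we have $V=m+n$, $E=mn$, $\ell=4$ (the graph is bipartite), which yields $\gamma(K_{m,n})\ge\lceil(m-2)(n-2)/4\rceil$. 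The nonorientable analogues replace Euler's formula by $V-E+F=2-k$ for an embedding with $k$ crosscaps and give, in the same way, $\overline{\gamma}(K_n)\ge\lceil(n-3)(n-4)/6\rceil$ and $\overline{\gamma}(K_{m,n})\ge\lceil(m-2)(n-2)/2\rceil$. For $K_7$ the generic bound only gives $\overline{\gamma}(K_7)\ge 2$, so one must check separately that $K_7$ admits no embedding on the Klein bottle (the forced triangular rotation structure does not exist), whence $\overline{\gamma}(K_7)=3$.

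\textbf{Upper bounds.} These require exhibiting embeddings that meet the counting bounds, i.e. triangular embeddings of $K_n$ and quadrilateral embeddings of $K_{m,n}$ on the indicated surfaces whenever the arithmetic allows, with small local modifications in the residue classes where a perfectly triangular (resp. quadrilateral) embedding is impossible. This is precisely the content of the Ringel--Youngs solution of the Heawood map-colouring problem for $K_n$ and of Ringel's theorem for $K_{m,n}$; the constructions proceed case-by-case according to $n\bmod 12$ (respectively $n\bmod 6$ in the nonorientable case), using rotation systems obtained from current graphs. I would simply cite these results, as the paper does: reproving them is the genuinely hard part and far exceeds what is needed here.

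\textbf{The consequences.} These follow from monotonicity of $\gamma$ and $\overline{\gamma}$ under taking subgraphs (already noted in the text) together with direct evaluation of the formulas: $\gamma(K_8)=\lceil 20/12\rceil=2$, so $\gamma(K_n)>1$ for all $n\ge 8$; $\overline{\gamma}(K_7)=3$ and $\overline{\gamma}(K_8)=\lceil 20/6\rceil=4$, so $\overline{\gamma}(K_n)>1$ for $n\ge 7$; $\gamma(K_{4,5})=\lceil 6/4\rceil=2$ and $\gamma(K_{3,7})=\lceil 5/4\rceil=2$, giving $\gamma(K_{m,n})>1$ when $m\ge 4,\,n\ge 5$ or $m\ge 3,\,n\ge 7$; and $\overline{\gamma}(K_{3,5})=\lceil 3/2\rceil=2$, $\overline{\gamma}(K_{4,4})=2$, giving $\overline{\gamma}(K_{m,n})>1$ when $m\ge 3,\,n\ge 5$ or $m=n=4$. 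The main obstacle in the whole statement is the upper-bound half of parts (1)--(4); everything else is Euler-characteristic bookkeeping and arithmetic.
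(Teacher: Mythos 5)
Your proposal is correct and matches the paper's treatment: the paper states this result purely as a citation to White (the hard upper-bound constructions being the Ringel--Youngs and Ringel theorems), and your Euler-formula lower bounds, the separate handling of the exceptional case $\overline{\gamma}(K_7)=3$ (the paper's ``$\gamma(K_n)=3$ if $n=7$'' is a typo for the nonorientable genus, which you read correctly), and the arithmetic plus subgraph-monotonicity for the stated consequences are all accurate. The only point worth making explicit is that the Euler-formula lower bound presupposes a 2-cell embedding, which is justified because minimum-genus (orientable and nonorientable) embeddings of connected graphs are cellular.
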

\begin{thm}\label{pre}
	\begin{enumerate}[{\normalfont (i)}]
		\item ({\cite[p.129]{millar}})
		The number of non-cyclic subgroup of order $p^{\alpha}$ in any non-cyclic group of order $p^m$ is of the form $1+kp$ whenever $1<\alpha<m$ and $p>2$.
		\item ({\cite[Proposition 1.3]{Scott}})
		If $G$ is a $p$-group of order $p^n$, and it has a unique subgroup of order $p^m,~1<m\leq n$, then $G$ is cyclic or $m=1$ and $p=2$, $G\cong Q_{2^\alpha}$.
		\item ({\cite[Theorem IV, p.129]{Burnside}})
		If $G$ is a $p$-group of order $p^n$, then the number of subgroups of order $p^s,~1\leq s\leq n$ is congruent to 1 (mod~$p$).
	\end{enumerate}
	
\end{thm}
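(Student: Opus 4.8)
All three items are classical facts about finite $p$-groups, due in essence to Frobenius, Burnside, Miller and Kulakoff; I would establish them in the order (iii), (ii), (i), since (i) uses (iii) whereas (ii) is self-contained. Throughout, $s_k$ denotes the number of subgroups of order $p^k$ in the $p$-group under consideration.

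\noindent\emph{Part (iii).} Induct on $n$. For $k=1$: distinct subgroups of order $p$ intersect trivially, so $G$ has exactly $s_1(p-1)$ elements of order $p$, and Frobenius' theorem (the number of solutions of $x^{d}=e$ in $G$ is divisible by $\gcd(d,|G|)$) applied with $d=p$ gives $1+s_1(p-1)\equiv 0\pmod p$, i.e.\ $s_1\equiv 1\pmod p$. For $k\ge 2$, choose $N\le Z(G)$ with $|N|=p$ (the centre of a non-trivial $p$-group is non-trivial). The subgroups of order $p^{k}$ containing $N$ correspond bijectively, via $G\to G/N$, to the subgroups of order $p^{k-1}$ of $G/N$, of which there are $\equiv 1\pmod p$ by induction. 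The subgroups $H$ of order $p^{k}$ with $N\not\le H$ I would partition according to $HN$ (which has order $p^{k+1}$ and contains $N$, because $H\cap N=1$): inside a fixed such $K$, the index-$p$ subgroups of $K$ that avoid $N$ correspond to the hyperplanes of the elementary abelian quotient $K/\Phi(K)$ that avoid the image of $N$, and there are a multiple of $p$ of those; summing over the blocks shows this contribution is $\equiv 0\pmod p$, so $s_k\equiv 1\pmod p$. (Alternatively, the conjugation action of $G$ on the set of subgroups of order $p^{k}$ reduces this to counting the normal ones.)

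\noindent\emph{Part (ii).} The effective range of the hypothesis is $1<m<n$ (for $m=n$ the only subgroup of that order is $G$ itself, so the hypothesis is automatic and the conclusion fails, e.g.\ for $Q_8$). Let $H$ be the unique subgroup of order $p^m$; being the unique subgroup of its order it is characteristic, hence normal, in $G$. Since $m<n$, pick $K$ with $H\le K$ and $|K|=p^{m+1}$; each maximal subgroup of $K$ has order $p^m$ and is a subgroup of $G$, hence equals $H$. Thus $K$ has a unique maximal subgroup, so $K$ is cyclic; in particular $H$, being a subgroup of $K$, is cyclic of order $p^m$. Then the unique subgroup of order $p$ of $H$ is characteristic in $H$, hence in $G$; and since every order-$p$ subgroup of $G$ lies in some subgroup of order $p^m$, i.e.\ in $H$, it must equal that one. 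Hence $G$ has a unique subgroup of order $p$, so the classical theorem on such groups applies: $G$ is cyclic, or $p=2$ and $G\cong Q_{2^{\alpha}}$. The quaternion case is impossible once $m\ge 2$, because $Q_{2^{\alpha}}$ has more than one subgroup of each order $p^{j}$ with $2\le j\le\alpha-1$; therefore $G$ is cyclic. (The residual case $m=1$ is precisely the cited classical theorem, which I would prove by induction on $|G|$: reduce to the fact that $Z(G)$ is cyclic, note that every proper subgroup is cyclic or generalized quaternion by induction, and finish via the classification of $p$-groups having a cyclic maximal subgroup.)

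\noindent\emph{Part (i).} By (iii), $s_\alpha\equiv 1\pmod p$. Write $s_\alpha=c_\alpha+t_\alpha$, where $c_\alpha$ and $t_\alpha$ count the cyclic and the non-cyclic subgroups of order $p^\alpha$ respectively; it suffices to show $c_\alpha\equiv 0\pmod p$. Since each cyclic subgroup of order $p^\alpha$ has exactly $\varphi(p^\alpha)=p^{\alpha-1}(p-1)$ generators, $c_\alpha=N_\alpha/\bigl(p^{\alpha-1}(p-1)\bigr)$, where $N_\alpha$ is the number of elements of order exactly $p^\alpha$ in $G$; so the claim amounts to $p^{\alpha}\mid N_\alpha$. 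Writing $N_\alpha=f(\alpha)-f(\alpha-1)$ with $f(j)=\#\{x\in G:x^{p^j}=e\}$, Frobenius' theorem yields only $p^{j}\mid f(j)$, which is one factor of $p$ short; the missing factor is supplied by a refinement of the Frobenius congruence valid for non-cyclic $p$-groups with $p$ odd (the prototype being Kulakoff's $s_1\equiv 1+p\pmod{p^2}$), which forces the number of cyclic subgroups of order $p^\alpha$ to be divisible by $p$. This is exactly where $p>2$ is needed: for $p=2$ the generalized quaternion groups are counterexamples, e.g.\ $Q_8$ has no non-cyclic subgroup of order $4$. I would either cite this refinement (Miller, Kulakoff) or reproduce its counting argument. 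The only genuinely hard input in the whole statement will be this odd-$p$ refinement needed for (i); everything else is routine bookkeeping in the subgroup lattice of finite $p$-groups.
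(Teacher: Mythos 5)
The paper does not prove this theorem at all: all three parts are quoted as classical facts with pointers to Miller--Blichfeldt--Dickson, Scott, and Burnside, so there is no argument of the authors' to measure yours against. Judged on its own terms, your treatment of (iii) and (ii) is correct and essentially self-contained. In (iii) the partition of the order-$p^{k}$ subgroups avoiding a central $N$ of order $p$ into blocks indexed by $K=HN$ does work: within a block the relevant subgroups are the hyperplanes of $K/\Phi(K)$ missing the image of $N$, and a non-empty block forces $d=\dim K/\Phi(K)\ge 2$ (if $d=1$ then $K$ is cyclic and $N\le\Phi(K)$, so the block is empty), whence each block contributes $p^{d-1}\equiv 0\pmod p$. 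In (ii), your observation that the printed statement fails for $m=n$ (the hypothesis is then vacuous, and $Q_8$ violates the conclusion) is a genuine and worthwhile correction to the paper; since the authors only ever invoke the result with $m<n$, or in the contrapositive form ``non-cyclic and non-quaternion implies at least three subgroups of order $p$'' in tandem with (iii), no downstream argument is damaged, but the restriction $m<n$ should be recorded. Your reduction of the $1<m<n$ case to the unique-subgroup-of-order-$p$ theorem, and the elimination of the generalized quaternion alternative for $m\ge 2$, are both sound.

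The one place where you do not actually have a proof is (i). The reduction is right: by (iii) it suffices to show that the number of \emph{cyclic} subgroups of order $p^{\alpha}$ is divisible by $p$ when $G$ is non-cyclic, $p>2$ and $1<\alpha<m$, and you correctly diagnose that Frobenius's divisibility is one factor of $p$ short of this. But the missing ingredient (Miller's theorem, equivalently Kulakoff's congruence $s_{\alpha}\equiv 1+p\pmod{p^{2}}$) is then cited rather than proved, so part (i) remains a pointer to the literature rather than an argument. That is precisely the status the statement has in the paper itself, so nothing is lost relative to the source; just be aware that this step is the entire content of (i), and the standard proofs of it (induction on $|G|$ through a normal subgroup isomorphic to $\mathbb Z_{p}\times\mathbb Z_{p}$) are short but not the ``routine bookkeeping'' your closing sentence suggests. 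It is also worth noting that the paper only ever uses (i) to extract a non-cyclic subgroup of order $p^{\alpha}$ (indeed a copy of $\mathbb Z_{p}\times\mathbb Z_{p}$) from a non-cyclic $p$-group with $p$ odd, and that weaker existence statement already follows from your (ii) without the Kulakoff-type refinement.
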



\section{Some results on the complement of proper power graphs of groups}
\begin{thm}\label{complete}
	Let $G$ be a finite group. Then $\overline{\mathscr{P}^*(G)}$ is complete if and only if $G\cong \mathbb Z_{2}^m$, $m\geq 1$.
\end{thm}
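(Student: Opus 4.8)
The plan is to translate completeness of $\overline{\mathscr{P}^*(G)}$ into the group-theoretic condition recorded in the Introduction: two distinct nontrivial elements $u,v$ are adjacent in $\overline{\mathscr{P}^*(G)}$ if and only if $u\notin\langle v\rangle$ and $v\notin\langle u\rangle$. Hence $\overline{\mathscr{P}^*(G)}$ is complete precisely when $G$ contains no two distinct nontrivial elements $u,v$ with $v\in\langle u\rangle$; equivalently, every nontrivial cyclic subgroup $\langle x\rangle$ of $G$ has order $2$, i.e. every nontrivial element of $G$ has order $2$.

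For the ``if'' direction, suppose $G\cong\mathbb{Z}_2^m$ with $m\geq 1$. Every nontrivial $x\in G$ has $o(x)=2$, so $\langle x\rangle=\{e,x\}$. Then for any two distinct nontrivial elements $u\neq v$ we have $u\notin\langle v\rangle$ and $v\notin\langle u\rangle$, so $u$ and $v$ are adjacent in $\overline{\mathscr{P}^*(G)}$. Thus $\overline{\mathscr{P}^*(G)}$ is complete (when $m=1$ it has a single vertex, which is vacuously complete).

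For the ``only if'' direction, assume $\overline{\mathscr{P}^*(G)}$ is complete and suppose, for contradiction, that $G$ has an element $x$ with $n:=o(x)\geq 3$. Then $x^2\neq e$ (since $n\nmid 2$) and $x^2\neq x$, so $x$ and $x^2$ are distinct nontrivial vertices; moreover $x^2\in\langle x\rangle$, so $x$ and $x^2$ are adjacent in $\mathscr{P}^*(G)$ and hence non-adjacent in $\overline{\mathscr{P}^*(G)}$, contradicting completeness. Therefore every nontrivial element of $G$ has order $2$. Consequently, for all $a,b\in G$ we get $(ab)^2=e$, whence $ab=(ab)^{-1}=b^{-1}a^{-1}=ba$, so $G$ is abelian; being a finite abelian group of exponent $2$, it is a vector space over the field of two elements, i.e. $G\cong\mathbb{Z}_2^m$ for some $m\geq 1$.

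The argument is elementary and I do not anticipate any genuine obstacle; the only point worth stating explicitly (and which I would include in the write-up for completeness) is the standard fact that a finite group all of whose nontrivial elements have order $2$ is elementary abelian, which is exactly the last step above.
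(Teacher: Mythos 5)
Your proposal is correct and follows essentially the same route as the paper: show completeness forces every nontrivial element to have order $2$ (your explicit witness $x^2$ plays the role of the paper's element $y\in\langle x\rangle$), then invoke the standard fact that a group of exponent $2$ is abelian, hence $G\cong\mathbb{Z}_2^m$, with the converse being immediate. No gaps; nothing further needed.
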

\begin{proof}
	Assume that $\overline{\mathscr{P}^*(G)}$ is complete. Then every element of $G$ is of order 2. For, if $G$ contains an element $x$ of order $p~(\neq 2)$, then there exist a non-trivial element $y$ in $\left\langle x \right\rangle $. Then $x$ is not adjacent to $y$ in $\overline{\mathscr{P}^*(G)}$, which is a contradiction to the hypothesis. Therefore, $G$ is a 2-group with exponent 2. Since any group with exponent 2 must be abelian, so $G\cong \mathbb Z_{2}^m,~m\geq 1$. Conversely, if $G\cong \mathbb Z_{2}^m,~m\geq 1$, then every element of $G$ is of order 2, so it follows that $\overline{\mathscr{P}^*(G)}$ is complete.
\end{proof}
\begin{thm}
	Let $G$ be a finite group. Then $\overline{\mathscr{P}^*(G)}$ is $K_{1,3}$-free if and only if $G$ is isomorphic to one of the following:
	\begin{enumerate}[{\normalfont (i)}]
		\item $\mathbb Z_{p^n}$, $\mathbb Z_6$, $S_3$, $\mathbb Z_2^n$, $Q_8$, where $n\geq 1$;
		\item 3-group with exponent 3;
		\item non-nilpotent group of order $2^{n}.3$ or $2.3^m$, where $n,m>1$ with all non-trivial elements are of order $2$ or $3$.
		
	\end{enumerate}
\end{thm}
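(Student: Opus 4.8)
The plan is to first restate $K_{1,3}$-freeness in power-graph language. A claw in $\overline{\mathscr{P}^*(G)}$ with centre $x$ and leaves $a,b,c$ is precisely a triangle $\{a,b,c\}$ of $\mathscr{P}^*(G)$ together with a vertex $x$ satisfying $x\notin\langle a\rangle\cup\langle b\rangle\cup\langle c\rangle$ and $a,b,c\notin\langle x\rangle$. Two elementary observations drive everything. (1) If $\{a,b,c\}$ is a triangle of $\mathscr{P}^*(G)$ and $c$ has the largest order among them, then pairwise comparability forces $a,b\in\langle c\rangle$; more generally, any independent set of $\overline{\mathscr{P}^*(G)}$ is a set of pairwise comparable elements and hence lies in a single cyclic subgroup. (2) If $o(c)\geq 4$, then $\langle c\rangle$ contains the triangle $\{c,c^{-1},c^{k}\}$ for any non-trivial power $c^{k}\notin\{c,c^{-1}\}$, since a generator is comparable with every element of the cyclic group it generates.

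For sufficiency the listed groups split into two types. If every non-trivial element of $G$ has order $2$ or $3$ (this covers $\mathbb{Z}_2^n$, the $3$-groups of exponent $3$, $S_3$, and the non-nilpotent groups of order $2^n\cdot 3$ or $2\cdot 3^m$ of (iii)), then every cyclic subgroup has at most two non-trivial elements, so by (1) the independence number of $\overline{\mathscr{P}^*(G)}$ is at most $2$ and there is no induced $K_{1,3}$ at all. For $\mathbb{Z}_{p^n}$ the subgroup lattice is a chain, so $\mathscr{P}^*(\mathbb{Z}_{p^n})$ is complete and $\overline{\mathscr{P}^*(\mathbb{Z}_{p^n})}$ is edgeless. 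For $\mathbb{Z}_6$ every triangle of $\mathscr{P}^*(\mathbb{Z}_6)$ contains a generator, and a generator is comparable with all elements, so no vertex can serve as a claw centre (indeed $\overline{\mathscr{P}^*(\mathbb{Z}_6)}\cong P_3\cup 2K_1$). For $Q_8$ the unique involution $a^2$ lies in every non-trivial cyclic subgroup and is comparable with everything, while a triangle avoiding it would consist of three pairwise comparable elements of order $4$, forcing them into one $\mathbb{Z}_4$, which is impossible; hence $\overline{\mathscr{P}^*(Q_8)}$, which is $K_{2,2,2}\cup K_1$, is $K_{1,3}$-free.

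For necessity, assume $\overline{\mathscr{P}^*(G)}$ is $K_{1,3}$-free and let $n$ be the maximum element order in $G$; I treat $n\geq 5$, $n=4$, and $n\leq 3$ separately. Suppose $n\geq 5$ and fix $a$ with $o(a)=n$. If $G\neq\langle a\rangle$, choose $x\notin\langle a\rangle$; maximality of $n$ forces $a\notin\langle x\rangle$, so $\langle a\rangle\cap\langle x\rangle$ is proper in $\langle a\rangle$, and since the largest proper divisor of $n\geq 5$ is at most $n/2$ we get $|\langle a\rangle\setminus\langle x\rangle|\geq 3$; choosing $c\in\langle a\rangle\setminus\langle x\rangle$ with $c\neq a,a^{-1}$, the triangle $\{a,a^{-1},c\}$ lies entirely in $\langle a\rangle\setminus\langle x\rangle$, so $\{x;a,a^{-1},c\}$ is a claw — a contradiction. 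Hence $G=\langle a\rangle\cong\mathbb{Z}_n$, and the same idea handles the cyclic case: if some prime $q\mid n$ has the property that the largest divisor $n'$ of $n$ coprime to $q$ satisfies $n'\geq 4$, then taking $x$ of order $q$ and a triangle inside the subgroup of order $n'$ (all of whose elements have order coprime to $q$, hence are incomparable with $x$) produces a claw; a short count of the prime factors of $n$ shows this fails for every $q$ only when $n$ is a prime power or $n=6$. So the case $n\geq 5$ yields exactly $\mathbb{Z}_{p^n}$ and $\mathbb{Z}_6$. If $n=4$ then $G$ has exponent $4$; if $G$ is non-cyclic I claim $\overline{\mathscr{P}^*(G)}$ has a claw whenever $G$ contains a subgroup $\langle g\rangle\cong\mathbb{Z}_4$ and an element $x$ with $\langle x\rangle\cap\langle g\rangle=\{e\}$ (namely $\{x;g,g^2,g^3\}$), and in particular whenever $G$ has an involution different from $g^2$; so $G$ has a unique involution, and, the odd part of $G$ being ruled out by exponent $4$, $G$ is a $2$-group with a unique subgroup of order $2$, whence by Theorem~\ref{pre}(ii) and non-cyclicity $G\cong Q_{2^\alpha}$, and exponent $4$ forces $G\cong Q_8$. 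Finally, if $n\leq 3$ then every non-trivial element has order $2$ or $3$, so $|G|=2^a3^b$; the cases $a=0$ and $b=0$ give a $3$-group of exponent $3$ and $\mathbb{Z}_2^a$, and if $a,b\geq 1$ then $G$ cannot be nilpotent, since a direct product of a non-trivial $2$-group and a non-trivial $3$-group contains an element of order $6$.

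The main obstacle is the last claim needed to finish the case $a,b\geq 1$: a finite group in which every element has order $2$ or $3$ and whose order is divisible by $6$ must have order $2\cdot 3^m$ or $2^n\cdot 3$, i.e.\ one of its Sylow subgroups has order $2$ or $3$. The complement graph gives no leverage here — its independence number is at most $2$, so it is automatically $K_{1,3}$-free — so this step is purely group-theoretic. I would obtain solvability from Burnside's $p^aq^b$-theorem, induct on $|G|$ by passing to $G/N$ for a minimal normal (elementary abelian) subgroup $N$ (noting that $G/N$ again has all non-trivial elements of order $2$ or $3$), and in the base case, where an elementary abelian Sylow subgroup $P$ is normal, observe that the absence of elements of order $6$ forces the complementary Sylow subgroup to act fixed-point-freely on $P$; since a group of fixed-point-free automorphisms of an elementary abelian group has cyclic Sylow subgroups for odd primes (and no elementary abelian $2$-subgroup of order $\geq 4$ can act fixed-point-freely, as the three involutions cannot all invert $P$), the complementary Sylow subgroup collapses to order $2$ or $3$. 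Packaging this reduction cleanly is where the bulk of the work lies.
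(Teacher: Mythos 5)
Your graph-theoretic core is correct and takes a genuinely different route from the paper. You organize the whole classification around the maximum element order, driven by the observation that the three leaves of a claw in $\overline{\mathscr{P}^*(G)}$ are pairwise comparable and hence lie in a single cyclic subgroup; this immediately reduces everything to ``some element has order $\geq 4$'' (where you exhibit explicit claws, forcing $G$ cyclic of prime-power order or order $6$, or $G\cong Q_8$ via the unique-involution/generalized-quaternion theorem) versus ``all elements have order $2$ or $3$'' (where the independence number is at most $2$ and claw-freeness is automatic). The paper instead cases on the number of distinct prime divisors of $|G|$ and does ad hoc subgroup counting in each branch; your invariant is cleaner and your sufficiency arguments (e.g.\ for $Q_8$ and $\mathbb Z_6$) are uniform rather than read off from pictures. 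The two proofs converge at the same bottleneck: classifying $\{2,3\}$-groups in which every non-trivial element has prime order.

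That bottleneck is the one soft spot in your write-up, and it is worth being precise about it. The paper does not prove this fact; it cites the Cheng--Deaconescu--Lang--Shi corrigendum to ``Classification of finite groups with all elements of prime order'' (reference \cite{prime elt}), and you could simply do the same. Your sketched induction does not quite close as stated: if $N$ is a minimal normal $2$-subgroup that is \emph{properly} contained in a Sylow $2$-subgroup and $G/N$ has order $2\cdot 3^c$, then $|G|=2^{d+1}\cdot 3^c$ with $d\geq 1$, which is not of the required form, and your stated base case (a normal elementary abelian \emph{Sylow} subgroup) does not cover this configuration. The fix is the fixed-point-free argument you already have, applied one level up: a full Sylow $3$-subgroup $R$ of $G$ intersects $N$ trivially and every non-trivial element of $R$ acts without fixed points on $N$ (a fixed point yields an element of order $6$), so $R$ is cyclic of exponent $3$, hence $|R|=3$; dually for a minimal normal $3$-subgroup. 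Equivalently, one can skip the induction entirely by applying your fixed-point-free argument to the Fitting subgroup $F(G)$, which must be a $p$-group here and satisfies $C_G(F(G))\leq F(G)$. With either repair, or with the citation the paper uses, your proof is complete.
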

\begin{proof} Let $|G|$ has $k$ distinct prime divisors. \\
	\noindent\textbf{Case 1.} If $k\geq 3$, then $G$ contains at least one subgroup of order $p\geq 5$, let it be  $H$. Since $H$ is a subgroup of prime order, so the non-trivial  elements in $H$ are not adjacent to each other in $\overline{\mathscr{P}^*(G)}$. The elements in $G$ of order $q$ $(q\neq p)$, are not a power of any of the elements of $H$, and vise versa. So $\overline{\mathscr{P}^*(G)}$ contains $K_{1,3}$ as an induced subgraph.\\
	\noindent\textbf{Case 2.} If $k=2$, then $|G|={p}^{n}{q}^{m}$ where $n, m\geq 1$. If at least one of $p$ or $q\geq 5$, then $\overline{\mathscr{P}^*(G)}$ contains $K_{1,3}$ as an induced subgraph, as by the argument used in Case 1. So we now assume that both $p$, $q<5$; without loss of generality, we can take $p=2$, $q=3$.
	
	If 	$m=n=1$, then $G\cong \mathbb  Z_6$ or $S_3$. It is easy to see that $\overline{\mathscr{P}^*(\mathbb  Z_6)}\cong K_{1,2}\cup \overline{K_2}$, which is $K_{1,3}$-free.  $\overline{\mathscr{P}^*(S_3)}$ is shown in Figure~\ref{S3}, which is
	\begin{figure}[ht]
		\begin{center}
			\includegraphics[scale=1]{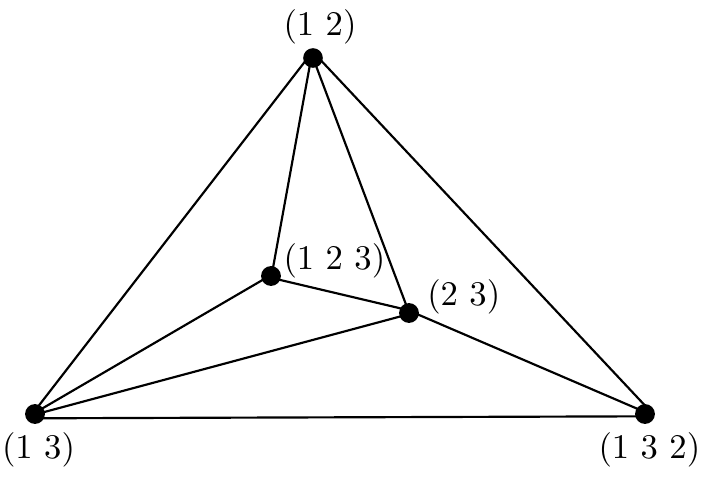}\caption{ $\overline{\mathscr{P}^*(S_3)}.$} \label{S3}
		\end{center}
	\end{figure}
	$K_{1,3}$-free.
	
	Now we assume that either $n$ or $m>1$. Suppose $G$ contains an element $x$ whose order is not a prime, then $o(x)=2^l~(l>1)$, $3^s~(s>1)$ or $2^l3^s$  $(0<l\leq n$, $0<s\leq m)$.
	
	\noindent\textbf{Subcase 2a.} If $o(x)=2^l$, $l>1$,
	then any three non-trivial elements of $\langle x \rangle$ are power of each other. These three elements together with the element of order 3 forms $K_{1,3}$ as an induced subgraph of $\overline{\mathscr{P}^*(G)}$.
	
	\noindent\textbf{Subcase 2b.} If $o(x)=3^s,s>1$, then as in Subcase 2a, we can show that $\overline{\mathscr{P}^*(G)}$ contains $K_{1,3}$ as an induced subgraph.
	
	\noindent\textbf{Subcase 2c.} If $o(x)=2^l3^s$, where $0<l\leq n$, $0<s\leq m$, then $\left\langle x\right\rangle $ contains an element of order 6, let it be $y$. Let $X_1$ and $X_2$ be subsets of $\left\langle y \right\rangle$, where $X_1$ contains two elements of order 6, and two elements of order 3; and $X_2$ contains one element of order 2, and two elements of order 6.	
	Since either $n$ or $m>1$, so $G$ contains a subgroup of order $p^s$, where $p=2$ or 3 and $s>1$, let it be $H$. Suppose that $H$ is cyclic. Then $G$ contains an element of order $p^s$, where $p=2$ or $3$ and $s>1$. By Subcases 2a and 2b, $\overline{\mathscr{P}^*(G)}$ contains $K_{1,3}$ as a subgraph.	
	If $H$ is non-cyclic, then $H$ contains more than two cyclic subgroups of order $p$. Hence $G$ contains an element of order $p$, which is not in $\left\langle y\right\rangle $, let it be $z$. Then the elements in $X_1$ and $z$ induces $K_{1,3}$ as a subgraph of $\overline{\mathscr{P}^*(G)}$. Thus,  it remains to consider the case when all the non-trivial elements of $G$  are of order either 2 or 3. If we assume that $G$ is such  a group, then by \cite{prime elt}, $G$ must be non-nilpotent of order either $2^n.3$ or $2.3^m$, $n,m>1$. Moreover, the degree of each vertex in $\overline{\mathscr{P}^*(G)}$ is either $|G|-2$ or $|G|-3$, and so $\overline{\mathscr{P}^*(G)}$ is  $K_{1,3}$-free.\\
	\noindent\textbf{Case 3.} If $k=1$, then $|G|=p^n$, $n\geq 1$.
	
	\noindent\textbf{Subcase 3a.} If G is cyclic, then obviously
	\begin{align}\label{p^n} \overline{\mathscr{P}^*(\mathbb Z_{p^n})}\cong \overline {K}_{p^{n}-1},
	\end{align}
	which is $K_{1,3}$-free.
	
	\noindent\textbf{Subcase 3b.} Assume that $G$ is non-cyclic.
	
	\noindent\textbf{Subcase 3b(i).} Let $p=2$. If $n=2$, then $G\cong \mathbb Z_2\times \mathbb Z_2$, and so $\overline{\mathscr{P}^*(G)}\cong K_3$, which is $K_{1,3}$-free. Now we assume that $n>2$.  If $G \cong \mathbb Z_2^n$, then by Theorem~\ref{complete}, $\overline{\mathscr{P}^*(G)} \cong K_{2^n-1}$, which is $K_{1,3}$-free.		If $G\cong Q_{8}$, then $\overline{\mathscr{P}^*(G)}$ is as shown in Figure~\ref{Q8}, which is $K_{1,3}$-free.
	\begin{figure}[ht]
		\begin{center}
			\includegraphics[scale=1]{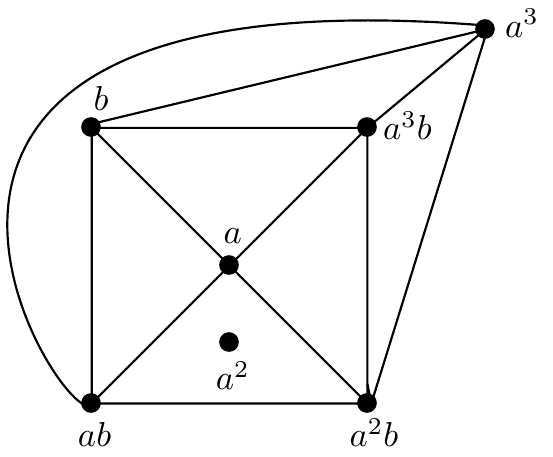}\caption{$\overline{\mathscr{P}^*(Q_8)}.$} \label{Q8}
		\end{center}
	\end{figure}
	If $G\cong Q_{2^n},$ $n\geq 4$, then $G$ contains a cyclic subgroup of order 8, let it be $H$. Here $H$ contains a unique subgroup of order 4. But $G$ contains at least two  cyclic subgroup of order 4, so as by the argument used in Case 1, $\overline{\mathscr{P}^*(G)}$ contains $K_{1,3}$ as an induced subgraph.
	
	Next, we assume that $G \ncong \mathbb Z_2^n$ and	 $Q_{2^n}$. Then $G$ contains an element of order $2^2$, let it be $x$ and so $\left\langle x \right\rangle $ contains a unique element of order 2. By Theorem~\ref{pre} (ii) and (iii), $G$  contains at least three elements of order 2. Therefore,  an element of order 2, which is not in $\left\langle x \right\rangle $ together with the non-trivial elements in $\left\langle x \right\rangle $ forms $K_{1,3}$ as an induced subgraph of $\overline{\mathscr{P}^*(G)}$.		
	
	\noindent\textbf{Subcase 3b(ii).} Let $p\neq 2$. Then by Theorem~\ref{pre}(i), $G$ has a subgroup $H \cong \mathbb Z_p\times \mathbb Z_p$. Then $H$ contains $p+1$ subgroups of order $p$. Also any two of these subgroups have trivial intersection. Hence each non-trivial element in any of these cyclic subgroups is not a power of any non-trivial element in another cyclic subgroups of $H$. Hence,
	\begin{align}\label{zpp}
	\overline{\mathscr{P}^*(\mathbb Z_{p}\times \mathbb Z_{p})}\cong K(p+1,p-1).
	\end{align}
	
	If $p\geq 5$, then by~(\ref{zpp}), $\overline{\mathscr{P}^*(G)}$ contains $K_{1,3}$ as an induced subgraph. Now assume that $p=3$. If $n=2$, then by~(\ref{zpp}), $\overline{\mathscr{P}^*(G)}\cong K(4,2)$. Therefore $\overline{\mathscr{P}^*(G)}$ is $K_{1,3}$-free. Suppose that $n>2$. If $G$ contains at least one element of order $3^2$, let it be $x$. Then $\left\langle x \right\rangle $ contains a unique subgroup of order 3. By Theorem~\ref{pre}(ii) and(iii), G contains at least four subgroups of order 3. Then as in the argument used in Case 1, the element $y\notin \left\langle x \right\rangle $ of order 3 together with the non-trivial elements in $\left\langle x \right\rangle $ forms $K_{1,3}$ as an induced subgraph of $\overline{\mathscr{P}^*(G)}$. If all the elements in $G$ are of order 3, that is, $G$ is a 3-group with exponent 3. Then degree of each vertex of $\overline{\mathscr{P}^*(G)}$ is $3^n-3$, so $\overline{\mathscr{P}^*(G)}$  is $K_{1,3}$-free.
	
	The proof follows by combining together all the above cases.
\end{proof}

\begin{thm}\label{c_3}
	Let $G$ be a finite group.  Then the following are equivalent:
	\begin{enumerate}[{\normalfont (1)}]
		\item $\overline{\mathscr{P}^*(G)}$ is isomorphic to either $\mathbb Z_{p^n}$ or $\mathbb Z_{pq^m}$, $n,m\geq 1$;
		\item $\overline{\mathscr{P}^*(G)}$ is $C_3$-free;
		\item $\overline{\mathscr{P}^*(G)}$ is bipartite.
	\end{enumerate}
\end{thm}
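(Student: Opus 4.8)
The plan is to establish the cycle of implications $(1)\Rightarrow(3)\Rightarrow(2)\Rightarrow(1)$ (interpreting condition $(1)$ as: $G$ is isomorphic to $\mathbb Z_{p^n}$ or to $\mathbb Z_{pq^m}$). The step $(3)\Rightarrow(2)$ is immediate, since a bipartite graph has no odd cycle, in particular no $C_3$. For $(1)\Rightarrow(3)$: if $G\cong\mathbb Z_{p^n}$, then by~(\ref{p^n}) the graph $\overline{\mathscr{P}^*(G)}\cong\overline K_{p^n-1}$ is edgeless, hence bipartite; if $G\cong\mathbb Z_{pq^m}$, I would exhibit a bipartition directly. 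In a cyclic group, $u$ and $v$ are adjacent in $\overline{\mathscr{P}^*(G)}$ precisely when $o(u)\nmid o(v)$ and $o(v)\nmid o(u)$. Let $A$ be the set of nontrivial elements whose order is divisible by $p$ (the orders $p,pq,\dots,pq^m$) and $B$ the set of elements of order a power of $q$ (the orders $q,q^2,\dots,q^m$). The orders occurring in $A$ form a chain under divisibility, and so do those occurring in $B$; hence every edge of $\overline{\mathscr{P}^*(G)}$ joins $A$ to $B$, and $(A,B)$ is a bipartition.

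For $(2)\Rightarrow(1)$, assume $\overline{\mathscr{P}^*(G)}$ is $C_3$-free. Two elementary remarks drive the argument. First, if $u,v\in G$ satisfy $o(u)\nmid o(v)$ and $o(v)\nmid o(u)$, then $u$ and $v$ are adjacent in $\overline{\mathscr{P}^*(G)}$ (since $u\in\langle v\rangle$ forces $o(u)\mid o(v)$), and if $H_1\ne H_2$ are cyclic subgroups of the same order then a generator of $H_1$ and a generator of $H_2$ are adjacent. Consequently: three elements of pairwise distinct prime orders form a $C_3$, so $|G|$ has at most two prime divisors; three distinct cyclic subgroups of one common order $d>1$ yield a $C_3$; and a cyclic subgroup of order divisible by $p^2q^2$ yields a $C_3$ (take elements of orders $p^2$, $q^2$, $pq$). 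Second, for every subgroup $H\le G$ the graph $\overline{\mathscr{P}^*(H)}$ is an induced subgraph of $\overline{\mathscr{P}^*(G)}$, because the cyclic subgroup generated by an element does not depend on the ambient group; so a $C_3$ inside any subgroup is a $C_3$ of $\overline{\mathscr{P}^*(G)}$.

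Now I would split on the number of prime divisors of $|G|$. If $|G|=p^n$ and $G$ is non-cyclic, then either $G$ has a unique subgroup of order $p$, in which case $G\cong Q_{2^\alpha}$ by Theorem~\ref{pre}(ii) and $\overline{\mathscr{P}^*(G)}$ contains a copy of $\overline{\mathscr{P}^*(Q_8)}$, hence the $C_3$ on the images of $i,j,k$; or, by Theorem~\ref{pre}(iii), $G$ has at least three subgroups of order $p$ and the first remark applies. Either way $\overline{\mathscr{P}^*(G)}$ has a $C_3$, so $G\cong\mathbb Z_{p^n}$. If $|G|=p^nq^m$ with $n,m\ge1$, then first, applying the $p$-group case inside a Sylow subgroup shows every Sylow subgroup of $G$ is cyclic. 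Next, if $G$ had two subgroups of order $p$ (or two of order $q$), then these two, together with any element of order $q$ (resp.\ $p$), form a $C_3$; so $G$ has a unique subgroup of order $p$ and a unique subgroup of order $q$. I then claim $G$ is nilpotent: if not, some Sylow subgroup, say the cyclic Sylow $p$-subgroup $P\cong\mathbb Z_{p^a}$, is not normal; letting $W_k\le P$ be the unique subgroup of order $p^k$, and $k_0$ the least index for which $W_{k_0}$ is not normal in $G$ (which is $\le a$, since $W_a=P$ is not normal), a conjugate $W_{k_0}^{g}\ne W_{k_0}$ is again cyclic of order $p^{k_0}>1$, so generators of $W_{k_0}$ and $W_{k_0}^{g}$ together with an element of order $q$ form a $C_3$ — a contradiction. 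Hence $G$ is the direct product of its cyclic Sylow subgroups, so $G\cong\mathbb Z_{p^nq^m}$; and if both $n\ge2$ and $m\ge2$, an element of order $p^2q^2$ produces a $C_3$, so $\min(n,m)=1$ and $G\cong\mathbb Z_{pq^m}$.

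The step I expect to be the main obstacle is this last case: non-nilpotent groups of order $p^nq^m$ all of whose Sylow subgroups are cyclic. Such a group need not be cyclic (for instance $S_3$), and even when a Sylow $p$-subgroup is non-normal all of its subgroups of order $p$ may coincide, so three subgroups of order $p$ are not immediately available; the ``climb to the least non-normal $W_{k_0}$'', which exploits that every $p$-subgroup of $G$ is cyclic because the Sylow subgroups are, is the device that forces nilpotency and completes the classification.
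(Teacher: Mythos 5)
Your proof is correct, and while the outer skeleton matches the paper's (case analysis on the number of prime divisors of $|G|$, the same bipartition for $(1)\Rightarrow(3)$, the same use of Theorem~\ref{pre}(ii),(iii) to split non-cyclic $p$-groups into the generalized-quaternion case and the ``at least three subgroups of order $p$'' case), you handle the hardest case --- non-cyclic $G$ of order $p^nq^m$ --- by a genuinely different and cleaner route. The paper first disposes of $n=m=1$ via the explicit structure $\overline{\mathscr{P}^*(\mathbb Z_q\rtimes\mathbb Z_p)}\cong K(q,p-1)+\overline K_{q-1}$, then for $n>1$ or $m>1$ picks an element $z\notin H\cup K$ (with $H,K$ Sylow subgroups) and chases through the possible orders of $z$ to build a triangle by hand. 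You instead argue structurally: all Sylow subgroups must be cyclic (by the $k=1$ case applied inside a Sylow subgroup, using that $\overline{\mathscr{P}^*(H)}$ is an induced subgraph of $\overline{\mathscr{P}^*(G)}$), and a non-normal cyclic Sylow subgroup yields two distinct conjugate cyclic subgroups of equal order whose generators, together with an element of coprime prime order, form a $C_3$; hence $G$ is nilpotent with cyclic Sylow subgroups, i.e.\ cyclic, and the divisibility remark on orders $p^2,q^2,pq$ forces $\min(n,m)=1$. (Your ``least non-normal $W_{k_0}$'' refinement is actually unnecessary --- the non-normal Sylow subgroup $P$ itself already supplies the conjugate pair --- and your Step~2 on uniqueness of the subgroups of order $p$ and $q$ is subsumed by Step~3, but neither redundancy is an error.) What your approach buys is robustness: it avoids the paper's element-by-element verification (where one must separately check adjacency of $z$ to elements of order $p^n$, $q^m$, $pq$, etc.) and replaces it with two reusable observations --- adjacency follows from mutual non-divisibility of orders, and generators of distinct equal-order cyclic subgroups are adjacent --- at the cost of invoking slightly more group theory (Sylow conjugacy and the fact that nilpotent groups with cyclic Sylow subgroups are cyclic).
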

\begin{proof} First we prove that  $ (1)\Leftrightarrow$ (2): \\
	Let $|G|$ has $k$ distinct prime divisors. Now we divide the proof into the following cases. \\
	\noindent\textbf{Case 1.} If $k=1$, then $|G|=p^n$. Suppose $G$ is cyclic, then by (\ref{p^n}), $\overline{\mathscr{P}^*(G)}$ is totally disconnected. Now, we assume that $G$ is non-cyclic. If $p>2$, then by Theorem~\ref{pre}(i), $G$ contains a subgroup isomorphic to $\mathbb Z_p\times \mathbb Z_p$. Then by~(\ref{zpp}), $\overline{\mathscr{P}^*(G)}$ contains $C_3$ as a subgraph. Now, let $p=2$. If $G\ncong Q_n$, then by Theorem~\ref{pre}(ii) and (iii), $G$ contains at least 3 elements of order 2, and so $\overline{\mathscr{P}^*(G)}$ contains $C_3$ as a subgraph. If $G\cong Q_n$, then $G$ contains at least three cyclic subgroups of order 4, and so $\overline{\mathscr{P}^*(G)}$ contains $C_3$ as a subgraph. \\
	\noindent\textbf{Case 2.} If $k=2$, then $|G|=p^nq^m$. Suppose $G$ is cyclic, then the elements of order $p^n$, $q^m$ and $p^rq^s$, where $0<r<n$, $0<s<m$ are not powers of one another. So they form $C_3$ as a subgraph of $\overline{\mathscr{P}^*(G)}$. Suppose that either $n<2$ or $m<2$. Without loss of generality, we assume that $n=1$. Then every element of order $p$ is adjacent to the elements of order $q^s$, $0<s\leq m$; the elements of order $pq^s$ , $0<s\leq m$, are adjacent to the elements of order $q^t$, $t>s$. So $\overline{\mathscr{P}^*(G)}$ does not contains $C_3$ as a subgraph. Suppose $G$ is a non-cyclic abelian, then $G$ contains a  subgroup isomorphic to either $\mathbb Z_p\times \mathbb Z_p$ or $\mathbb Z_q\times \mathbb Z_q$, and so by~(\ref{zpp}), $\overline{\mathscr{P}^*(G)}$ contains $C_3$ as  a subgraph. Suppose that $G$ is non-abelian. If $n=m=1$ and $q>p$, then $G\cong \mathbb Z_q\rtimes \mathbb Z_p$ and it contains $q$ Sylow $p$-subgroups, and a unique  Sylow $q$-subgroup. So
	\begin{align}\label{zqzp}
	\overline{\mathscr{P}^*(\mathbb Z_q\rtimes \mathbb Z_p)}\cong K(q,p-1)+\overline{K}_{q-1},
	\end{align}
	which contains $C_3$ as a subgraph. If either $n>1$ or $m>1$, then $G$ contains a subgroups of order $p^n$ and $q^m$, let them be $H$ and $K $ respectively. If either $H$ or $K$ is non-cyclic, then by Case 1, $\overline{\mathscr{P}^*(G)}$ contains $C_3$. If $H$ and $K$ are cyclic, then $G$ contains elements of order $p^r$, $0<r\leq n$ and $q^s$, $0<s\leq m$. Let $z$ be an element in $G$, which is not in $H$ and $K$. If the order of $z$ is $p^r$, where $r\leq n$, then $z$ together with an element of order $p^n$ and the element of order $q$ forms $C_3$ in $\overline{\mathscr{P}^*(G)}$. Similarly, if the order of $z$ is $q^s$, where $s\leq m$, then $\overline{\mathscr{P}^*(G)}$ contains $C_3$. If order of $z$ is $p^rq^s$, then $G$ contains an element of order $pq$. This element together with an element of order $p^n$ and $q^m$ forms $C_3$ in $\overline{\mathscr{P}^*(G)}$. \\
	\noindent\textbf{Case 3.}  If $k\geq 3$, then $G$ contains at least three elements of distinct prime orders, and  so they forms $C_3$ in $\overline{\mathscr{P}^*(G)}$.
	
	Next, we show that (1) $\Rightarrow$ (3): If $G\cong \mathbb Z_{p^n}$, then by (\ref{zpp}), $\overline{\mathscr{P}^*(G)}$ is bipartite.
	If $G\cong \mathbb Z_{pq^m}$, $m\geq 1$, then $\overline{\mathscr{P}^*(G)}$ is bipartite with bipartition $X$ and $Y$, where $X$ contains the elements of order $q^s$, $0<s\leq m$, and $Y$ contains the elements of order $p$ and the elements of order $pq^s$, $0<s\leq m$. \\
	Proof of (3) $\Rightarrow$ (2) is obvious.
	
	Combining all the above cases, we get the result.
\end{proof}
\begin{thm}\label{connected}
	Let $G$ be a finite group. Then $\overline{\mathscr{P}^*(G)}$ is disconnected if and only if $G\cong \mathbb Z_n$ or $Q_{2^\alpha}$. In this case, the number of  components of $\overline{\mathscr{P}^*(\mathbb Z_n)}$ is $n-1$, if $n=p^{\alpha}$; $\varphi{(n)}+1$, otherwise. The number of components of $\overline{\mathscr{P}^*(Q_{2^\alpha})}$ is 2.
\end{thm}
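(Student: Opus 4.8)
The plan is to establish the two implications separately and read off the component counts along the way. For the ``if'' direction I would treat the cyclic and quaternion cases apart. If $G\cong\mathbb Z_{p^{\alpha}}$ then $(\ref{p^n})$ already gives $\overline{\mathscr{P}^*(G)}\cong\overline{K}_{p^{\alpha}-1}$, an empty graph, so it has $|G|-1=n-1$ components. If $G\cong\mathbb Z_n$ with $n$ not a prime power, then each of the $\varphi(n)$ generators $g$ of $G$ is isolated in $\overline{\mathscr{P}^*(G)}$ (as $\langle g\rangle=G$, every other element is a power of $g$), so it remains to show the non-generators form a single component; writing $n=p^{a}m$ with $p\nmid m$ and $m>1$, I would take elements $u$ and $v$ of orders $p^{a}$ and $m$: these are non-generators, they are adjacent to one another, and a short divisibility check shows that every non-generator (an element whose order $d>1$ properly divides $n$) is adjacent to $u$ or to $v$. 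This gives $\varphi(n)+1$ components. For $G\cong Q_{2^{\alpha}}$ the crux is that the unique involution $z=b^{2}$ lies in every non-trivial cyclic subgroup of $G$ (each such subgroup has even order and $G$ has a unique subgroup of order $2$), so $z$ is isolated; meanwhile every element of $\langle a\rangle\setminus\{1,z\}$ is adjacent in $\overline{\mathscr{P}^*(G)}$ to each of the $2^{\alpha-1}$ elements of order $4$ outside $\langle a\rangle$ (such an element meets $\langle a\rangle$ only in $\{1,z\}$), so these $2^{\alpha}-2$ vertices form one component, giving exactly $2$ components.

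For the ``only if'' direction I would prove the contrapositive through two reductions. First, suppose some prime $p$ has two distinct subgroups of order $p$, with generators $u_1$ and $u_2$. Since a cyclic subgroup contains at most one subgroup of order $p$, for each non-identity $x$ at least one of $\langle u_1\rangle,\langle u_2\rangle$ is not contained in $\langle x\rangle$; one checks this forces $x$ to be adjacent to the corresponding $u_i$, so $\{u_1,u_2\}$ is a dominating edge and $\overline{\mathscr{P}^*(G)}$ is connected. Hence I may assume every prime divisor of $|G|$ yields a unique subgroup of that order, so by Theorem~\ref{pre}(ii) every Sylow subgroup of $G$ is cyclic or isomorphic to some $Q_{2^{\beta}}$. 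Second, suppose moreover that $|G|$ has distinct prime divisors $p_1,\dots,p_k$ with $k\ge 2$ and that $G$ is non-cyclic. The prime-order elements of $G$ fill the $k$ cyclic subgroups $P_i$ of orders $p_i$ and induce a connected complete multipartite subgraph $S$, since elements of distinct prime orders are always adjacent. I would then show every vertex $x$ is joined to $S$: if $o(x)$ misses some $p_j$ then $x$ is adjacent to a generator of $P_j$; and if $o(x)$ is divisible by every $p_i$, then, since $\langle x\rangle\neq G$ and $G$ is generated by its Sylow subgroups, there is an element $y\notin\langle x\rangle$ whose order misses some $p_j$, and this $y$ is adjacent to $x$ and, by the previous case, joined to $S$. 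Hence $\overline{\mathscr{P}^*(G)}$ is connected. It follows that a disconnected $\overline{\mathscr{P}^*(G)}$ forces $G$ cyclic, i.e.\ $G\cong\mathbb Z_n$, or a $p$-group, and in the latter case Theorem~\ref{pre}(ii) gives $G\cong\mathbb Z_{p^n}$ or $G\cong Q_{2^{\alpha}}$; the component counts are those found above.

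The divisibility case-checks (for the dominating edge, for the hubs $u,v$, and for the $Q_{2^{\alpha}}$ bipartite structure) are routine. The step I expect to require the most care is the final part of the second reduction --- producing, when $o(x)$ is divisible by every prime dividing $|G|$, a neighbour of $x$ that still reaches $S$. The clean way to see it is: if every element outside $\langle x\rangle$ had order divisible by all the $p_i$, then no element of prime-power order could lie outside $\langle x\rangle$, so every Sylow subgroup --- hence all of $G$ --- would lie in $\langle x\rangle$, contradicting $\langle x\rangle\neq G$. This is exactly where non-cyclicity of $G$ enters, and where the argument is easiest to get wrong.
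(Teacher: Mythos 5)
Your proof is correct, and while the forward direction (isolated generators of $\mathbb Z_n$, the isolated central involution of $Q_{2^\alpha}$, and explicit hubs joining the remaining vertices) matches the paper's argument almost step for step, your converse is organized differently. The paper splits immediately on the number $k$ of prime divisors of $|G|$: for $k\ge 2$ it takes the set of all elements of prime-power order as a connected core and attaches every other vertex to it by a case analysis on whether the relevant Sylow subgroup is cyclic, non-cyclic non-quaternion, or quaternion; for $k=1$ it uses the existence of at least three subgroups of order $p$. Your first reduction --- that two distinct subgroups of order $p$ yield a dominating edge $\{u_1,u_2\}$, since a cyclic group $\langle x\rangle$ can contain at most one of them --- subsumes the paper's $k=1$ case and its non-cyclic-Sylow subcases in one stroke, so you never need the Sylow-by-Sylow adjacency analysis. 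What remains for you is the case where every prime-order subgroup is unique, where your hub $S$ (prime-order elements only, rather than all prime-power-order elements) is smaller, at the cost of the extra two-step argument producing a prime-power-order element $y\notin\langle x\rangle$ when $o(x)$ is divisible by every prime; that step is correctly justified by your observation that otherwise all Sylow subgroups would lie in $\langle x\rangle$, forcing $G=\langle x\rangle$. Both routes are sound; yours trades the paper's longer case analysis for a cleaner dominating-edge lemma plus one slightly more delicate existence argument.
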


\begin{proof}
	First we assume that $G\cong \mathbb Z_n$. Then all the elements of $G$ are powers of the generators of $G$. So the generators of $G$ are isolated vertices in $\overline{\mathscr{P}^*(G)}$. If $n=p^{\alpha}$, then by (\ref{p^n}), number of components of $\overline{\mathscr{P}^*(G)}$ is $n-1$. Assume that $n=p_1^{n_1}p_2^{n_2}\ldots p_k^{n_k}$, where $p_i$'s are distinct primes and $n_i \geq 1$ for all $i$ and $k>1$. Let $x$ and $y$ be non-generators  of $G$ such that they are non-adjacent in $\overline{\mathscr{P}^*(G)}$. Then $\left\langle x \right\rangle \subseteq \left\langle y \right\rangle $ or $\left\langle y \right\rangle \subseteq \left\langle x \right\rangle $. Without loss of generality, we assume that $\left\langle y \right\rangle \subseteq \left\langle x \right\rangle $. Since $x$ is a non-generators  of $G$, so $p_i^{n_i}\nmid o(x)$ for some $i$. Hence $x$ adjacent to the element of order $p_i^{n_i}$ in $\overline{\mathscr{P}^*(G)}$, say $z$. Then $y$ is also adjacent to $z$, so $x-z-y$ is a $x-y$ path in $\overline{\mathscr{P}^*(G)}$. Thus, $\overline{\mathscr{P}^*(G)}$ has $\varphi{(n)}+1$ components.
	
	If $G\cong Q_{2^\alpha}$, then $G$ contains one cyclic subgroup of order $2^{\alpha-1}$, say $H$ and the remaining elements are of order 4. Since the element of order 2, say $x$ is a power of all the elements of $G$. So this element is an isolated vertex in $\overline{\mathscr{P}^*(G)}$. Hence all the elements in $G$ of order $4$, which is not in $H$ are adjacent to all the non-trivial elements of $H$, except $x$. So the number of components of $\overline{\mathscr{P}^*(G)}$ is 2.
	
	Now we assume that $G\ncong \mathbb Z_n$ and $Q_{2^\alpha}$. We have to show that $\overline{\mathscr{P}^*(G)}$ is connected. Let $|G|=p_1^{n_1}p_2^{n_2}\cdots p_k^{n_k}$, where $p_i$'s are distinct primes. We need to consider the following two cases.\\
	\noindent \textbf{Case 1.} Let $k \geq 2$.  For each $i=1,2,\ldots , k$, Let $X_{i}=\left\lbrace x\in G~|~o(x)=p_i^{m_i}, 0<m_i\leq n_i \right\rbrace $. Then each element $x \in X_i$ is adjacent to all the elements in $X_j,$ $i\neq j$. So the subgraph induced by the elements of $\displaystyle \bigcup_{i=1}^{k}X_i$ is connected. Now let $x\in G$ with $x\notin \displaystyle \bigcup_{i=1}^{k}X_i$. Since $G$ is non-cyclic, so $p_i^{n_i}\nmid o(x)$ for some $i$. Let $H$ be Sylow $p_i$-subgroup of $G$.\\
	\noindent\textbf{Subcase 1a.} If $H$ is cyclic, then $G$ contains an element of order $p_i^{n_i}$, say $z$. Then $z$ is not a power of $x$. So $z\in \bigcup_{i=1}^{k}X_i$, and is adjacent to $x$ in $\overline{\mathscr{P}^*(G)}$. \\
	\noindent\textbf{Subcase 1b.} Let $H$ be non-cyclic. If $H$ is non-quarternion, then by Theorem~\ref{pre} (ii) and (iii), $G$ contains more than two cyclic subgroups of order $p$.  So $x$ is adjacent to the elements of order $p$, which are not in $\left\langle x \right\rangle $. If $H$ is quarternion, then $H$ contains more than two cyclic subgroup of order 4, so $x$ is adjacent to the elements of order $4$, which are not in $\left\langle x \right\rangle $. In both the cases there exist $z\in \displaystyle \bigcup_{i=1}^{k}X_i$, which adjacent to $x$ in $\overline{\mathscr{P}^*(G)}$. \\
	\noindent \textbf{Case 2.} Let $k=1$.  Since $G$ is non-cyclic and non-quaternion, so by Theorem~\ref{pre}(ii) and (iii), $G$ contains more than two cyclic subgroups of order $p_1$, let them be $H_i:= \langle z_i \rangle$, $i=1,2,\ldots,r$, for some $r\geq 3$. Then each non-trivial element in $H_i$ is not a power of any non-trivial element in $H_j,$ $i\neq j$. Hence the subgraph induced by an elements in $\displaystyle \bigcup_{i=1}^{k}H_i$ is connected. Now, let $x\in G$, with $x\notin \displaystyle \bigcup_{i=1}^{k}H_i$. Then $\left\langle x \right\rangle $ contains exactly one $z_i$, so any $z_j$, $j\neq i$ is not a power of $x$; since $o(x)>o(z_j)$, so $x$ is also not a power of $z_j$. Thus $x$ is adjacent to $z_j\in \displaystyle \bigcup_{i=1}^{k}H_i$.
	
	From the above arguments, it follows that $\overline{\mathscr{P}^*(G)}$ is connected. This completes the proof.
\end{proof}

From the proof of the previous theorem, we deduce the following result. Note that this result follows directly from \cite[Lemma 8]{Mogha}. Here we obtain this as a consequence of the previous theorem.
\begin{cor}\label{isolated}
	Let $G$ be a finite group. Then $\overline{\mathscr{P}^*(G)}$ contains isolated vertices if and only if $G\cong \mathbb Z_n$ or $Q_{2^m}$. Moreover, the number of isolated vertices in $\overline{\mathscr{P}^*(\mathbb Z_n)}$ is $n-1$, if $n=p^{\alpha}$; $\varphi{(n)}$, otherwise. The number of isolated vertices in $\overline{\mathscr{P}^*(Q_{2^m})}$ is 1.
\end{cor}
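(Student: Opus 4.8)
The plan is to deduce this directly from the proof of Theorem~\ref{connected}, using the elementary fact that a nontrivial element $x$ of $G$ is an isolated vertex of $\overline{\mathscr{P}^*(G)}$ if and only if for every nontrivial $y\neq x$ we have $x\in\langle y\rangle$ or $y\in\langle x\rangle$; equivalently, $\langle x\rangle$ is comparable, under inclusion, to every cyclic subgroup of $G$. So the whole corollary amounts to locating, inside the case analysis already carried out for Theorem~\ref{connected}, exactly which vertices have this property and counting them.

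\emph{Forward direction.} Suppose $\overline{\mathscr{P}^*(G)}$ has an isolated vertex. If $|G|=2$ then $G\cong\mathbb Z_2$, which is of the form $\mathbb Z_n$. Otherwise $\overline{\mathscr{P}^*(G)}$ has at least two vertices, and the presence of an isolated vertex makes it disconnected, so Theorem~\ref{connected} forces $G\cong\mathbb Z_n$ or $G\cong Q_{2^m}$.

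\emph{Reverse direction and counting.} If $G\cong\mathbb Z_n$, then every element of $G$ is a power of any fixed generator, so each of the $\varphi(n)$ generators is non-adjacent to all other vertices and hence isolated. When $n=p^{\alpha}$, identity~(\ref{p^n}) shows that $\overline{\mathscr{P}^*(\mathbb Z_{p^{\alpha}})}$ is edgeless, so all $n-1$ vertices are isolated. When $n$ has at least two distinct prime divisors, the argument inside the proof of Theorem~\ref{connected} shows that a non-generator $x$, for which $p_i^{n_i}\nmid o(x)$ for some $i$, is adjacent to an element of order $p_i^{n_i}$; hence the isolated vertices are exactly the $\varphi(n)$ generators. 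If $G\cong Q_{2^m}$, write $H$ for its cyclic subgroup of order $2^{m-1}$: the unique involution $x$ of $G$ lies in every nontrivial cyclic subgroup, so it is a power of every element of $G$ and is therefore isolated, while for any other nontrivial $y$ the computations in the proof of Theorem~\ref{connected} produce an element non-comparable with $y$ (an element of order $4$ outside $H$ when $y\in H$, or, symmetrically, an element of $H$ of order $\geq 4$ when $y\notin H$), so $y$ is not isolated. Hence $\overline{\mathscr{P}^*(Q_{2^m})}$ has exactly one isolated vertex.

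The only points requiring a little care are the degenerate cases $G\cong\mathbb Z_2$ and $G\cong\mathbb Z_{p^{\alpha}}$, where the graph is edgeless so the count "number of generators" is superseded by "number of vertices", and the verification in $Q_{2^m}$ that no element besides the involution is a power of all the rest. Neither is a genuine obstacle, since every required (non-)adjacency was already recorded while proving Theorem~\ref{connected}; the corollary is essentially a matter of re-reading that proof and tallying the isolated vertices.
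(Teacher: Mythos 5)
Your proposal is correct and takes essentially the same route as the paper: the paper states this corollary precisely as a by-product of the proof of Theorem~\ref{connected}, reading off that the generators of $\mathbb Z_n$ (all $n-1$ vertices when $n=p^{\alpha}$, by (\ref{p^n})) and the unique involution of $Q_{2^m}$ are the isolated vertices, while connectedness in all other cases rules out isolated vertices. Your extra attention to the degenerate case $G\cong\mathbb Z_2$ and to why no other element of $Q_{2^m}$ is isolated only makes explicit what the paper leaves implicit.
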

%

\begin{thm}
	Let $G$ be a finite group. Then diam$(\overline{\mathscr{P}^*(G)})$ is $\infty$, if $~G\cong \mathbb Z_n$  or $Q_n$; 1, if $~G\cong \mathbb Z_2^m,~m\geq 1$; 2,  otherwise.
\end{thm}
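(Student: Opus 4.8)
The plan is to reduce the statement to its three regimes and settle the first two with results already established. First, $diam(\overline{\mathscr{P}^*(G)})=\infty$ exactly when $\overline{\mathscr{P}^*(G)}$ is disconnected, which by Theorem~\ref{connected} happens precisely for $G\cong\mathbb Z_n$ and $G\cong Q_{2^\alpha}$; this gives the first case. Next, $diam(\overline{\mathscr{P}^*(G)})=1$ means $\overline{\mathscr{P}^*(G)}$ is complete, which by Theorem~\ref{complete} is equivalent to $G\cong\mathbb Z_2^m$; this gives the second case. For every remaining group $G$, Theorem~\ref{connected} shows $\overline{\mathscr{P}^*(G)}$ is connected, so its diameter is finite, and Theorem~\ref{complete} shows it is not complete, so its diameter is at least $2$. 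Thus it remains to prove that any two non-adjacent vertices have a common neighbour.

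Let $u\ne v$ be non-adjacent in $\overline{\mathscr{P}^*(G)}$, say $\langle u\rangle\subseteq\langle v\rangle$; then $u$ is a non-trivial element of the cyclic subgroup $\langle v\rangle$, and $\langle v\rangle\ne G$ because $G$ is non-cyclic. I would first record the following reduction: if there is a vertex $w$ of prime order with $w\notin\langle v\rangle$, then $\langle w\rangle\cap\langle v\rangle$ is a proper subgroup of $\langle w\rangle$, hence trivial, so neither $u$ nor $v$ lies in $\langle w\rangle$, neither equals $w$, and therefore $w$ is adjacent to both $u$ and $v$. Producing such a $w$ then runs parallel to the connectedness proof in Theorem~\ref{connected}, split on the number $k$ of prime divisors of $|G|$: for $k\ge 2$ one finds an element of an appropriate prime-power order outside $\langle v\rangle$ from the Sylow structure, while for $k=1$ (a $p$-group that is neither cyclic nor generalized quaternion) Theorem~\ref{pre}(ii),(iii) gives at least $1+p\ge 3$ subgroups of order $p$, and since the cyclic group $\langle v\rangle$ contains at most one of them, some subgroup of order $p$ --- hence an element $w$ of order $p$ --- lies outside $\langle v\rangle$. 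Where the relevant Sylow subgroup is generalized quaternion one instead uses an element of order $4$ outside $\langle v\rangle$, taking care that its cyclic group meets $\langle v\rangle$ trivially (or at least does not contain $u$ or $v$).

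The step I expect to be the main obstacle is exactly this last caveat. It can happen that the cyclic subgroup $\langle v\rangle$ already contains every element of prime order of $G$ --- for instance when the Sylow $2$-subgroup of $G$ is generalized quaternion, so $G$ has a single involution, and $o(v)$ is divisible by every prime dividing $|G|$ --- and then no prime-order vertex lies outside $\langle v\rangle$, so the common neighbour of $u$ and $v$ must be found among elements of composite order whose cyclic groups are forced to contain the unique involution. Handling this situation needs a careful reading of the lattice of cyclic subgroups of $G$: one should reduce to the case where the Sylow $2$-subgroup is generalized quaternion, identify the unique involution, and then show that, for every such $G$ which is neither cyclic nor a $2$-group, one can always move to a vertex outside $\langle v\rangle$ whose cyclic subgroup avoids $u$ (and $v$). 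I would postpone this delicate case to the end, after the routine cases above are cleared away.
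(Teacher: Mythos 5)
Your reduction of the $\infty$ and $1$ cases to Theorems~\ref{connected} and~\ref{complete} is exactly what the paper does, and your key observation --- that a prime-order element $w\notin\langle v\rangle$ satisfies $\langle w\rangle\cap\langle v\rangle=1$ and is therefore a common neighbour of any two non-adjacent $u,v$ with $\langle u\rangle\subseteq\langle v\rangle$ --- is a clean packaging of the paper's Case 1/Case 2 argument. So the routine part of your plan is sound.

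The gap is the case you postponed, and it cannot be closed, because the statement fails there. Take $G=Q_8\times\mathbb Z_3$, which is neither cyclic, nor generalized quaternion, nor an elementary abelian $2$-group. Its unique involution is $z=(-1,0)$; every element of even order generates a cyclic subgroup containing an involution, necessarily $z$, so the only neighbours of $z$ in $\overline{\mathscr{P}^*(G)}$ are the two elements of order $3$. Now let $v=(i,1)$, an element of order $12$: the cyclic group $\langle v\rangle$ contains $z$ and both elements of order $3$. Hence $z$ and $v$ are non-adjacent and have no common neighbour, so $d(z,v)=3$ and $\mathrm{diam}(\overline{\mathscr{P}^*(Q_8\times\mathbb Z_3)})\neq 2$. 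This is precisely the configuration you flagged, where all prime-order elements of $G$ lie inside the single cyclic subgroup $\langle v\rangle$. The paper's own proof passes over it with the unjustified sentence ``Then $y$ is also adjacent to $z$'': when the relevant Sylow subgroup is quaternion, the constructed witness is an element of order $4$ whose cyclic subgroup contains the unique involution, so it is not adjacent to $y$ when $y$ is that involution. Your instinct that this subcase is the real obstacle was therefore correct; what is missing from your proposal is the recognition that it is a genuine counterexample rather than a harder subcase, so the theorem as stated needs an additional exclusion (roughly, of non-cyclic groups possessing a unique subgroup of order $p$ for every prime $p$ dividing $|G|$), rather than a more careful argument.
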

\begin{proof}
	The possibilities of $G$ with diam$(\mathscr{P}^*(G))$ is either $\infty$ or 1 follows from  Theorems~\ref{connected} and \ref{complete}, respectively. Now we assume that $G\ncong \mathbb Z_n$, $Q_n$ or $\mathbb Z_2^m,m\geq 1$. Let $G$ has $k$ distinct prime factors.\\
	\noindent \textbf{Case 1.} Let $k=1$. Since $G\ncong \mathbb Z_n$ and $Q_{2^\alpha}$, so by parts (ii), (iii) of Theorem~\ref{common}, $G$ contains at least three subgroups of prime order, let them be $\left\langle x_i \right\rangle ,$ $i=1,2,\ldots,r$, where $r\geq 3$. Now, let $x$ be a non-trivial element in $G$. Then $\left\langle x \right\rangle $ contains exactly one $\left\langle x_i \right\rangle $, for some $i$. It follows that,  every $x_j$ $(j\neq i)$ is not a power of $x$, and vice versa, so $x$ is adjacent to all $x_j$ $(j\neq i)$. Now let $u,~v$ be non-trivial elements in $G$. Then $\left\langle u \right\rangle $ and $\left\langle v \right\rangle $ contains a subgroup of prime order, let them be $\left\langle x_r\right\rangle $ and $\left\langle x_s\right\rangle $, respectively. If $\left\langle x_r\right\rangle \neq \left\langle x_s\right\rangle $, then $u$ and $v$ are not a powers of each other. So $u$ and $v$ are adjacent in $\overline{\mathscr{P}^*(G)}$. If $\left\langle x_r\right\rangle = \left\langle x_s\right\rangle$, then there exist $x_l$ ($ l \neq i$), which  is adjacent to both $u$ and $v$ in $\overline{\mathscr{P}^*(G)}$. So $u-x_l-v$ is a  $u-v$ path in $\overline{\mathscr{P}^*(G)}$.
	
	\noindent \textbf{Case 2.} Let $k\geq 2$. Let  $|G|=p_1^{n_1}p_2^{n_2}\ldots p_k^{n_k}$, where $p_i$'s are distinct primes and $n_i \geq 1$ for all $i$. Let $x$ and $y$ be not adjacent vertices in $\overline{\mathscr{P}^*(G)}$. Then $\left\langle x \right\rangle \subseteq \left\langle y \right\rangle $ or $\left\langle y \right\rangle \subseteq \left\langle x \right\rangle $. Without loss of generality, we assume that $\left\langle y \right\rangle \subseteq \left\langle x \right\rangle $. Since $G$ is non-cyclic, so $p_i^{n_i}\nmid o(x)$ for some $i$. Then by Subcases 1a and 1b in proof of Theorem~\ref{connected}, $x$ is adjacent to the element of order $p_i^{n_i}$ or $p_i$, let that element be $z$. Then $y$ is also adjacent to $z$, so $x-z-y$ is $x-y$ path in $\overline{\mathscr{P}^*(G)}$
\end{proof}
\begin{thm}
	If $G$ is a finite group, then gr$(\overline{\mathscr{P}^*(G)})$ is $\infty $, if $G\cong\mathbb Z_{p^n} $ or $\mathbb Z_{2p}$; 4, if $G\cong \mathbb Z_{pq^m}$, where $q^m\neq 2$; 3, otherwise.
\end{thm}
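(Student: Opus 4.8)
The plan is to treat separately the cyclic groups $\mathbb Z_{p^n}$, the groups $\mathbb Z_{2p}$ with $p$ odd, the remaining $\mathbb Z_{pq^m}$, and all other finite groups, reading off the girth in each case from the structure of $\overline{\mathscr P^*(G)}$ and from Theorem~\ref{c_3}. Note first that $\mathbb Z_{2p}$ ($p$ an odd prime) is exactly $\mathbb Z_{pq^m}$ with $q^m=2$, so the cases in the statement partition all finite groups into: (a) $G\cong\mathbb Z_{p^n}$; (b) $G\cong\mathbb Z_{2p}$, $p$ odd; (c) $G\cong\mathbb Z_{pq^m}$ with $G\not\cong\mathbb Z_{2p}$; (d) $G$ isomorphic to none of $\mathbb Z_{p^n}$, $\mathbb Z_{pq^m}$. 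In case (a), by~(\ref{p^n}) we have $\overline{\mathscr P^*(G)}\cong\overline K_{p^n-1}$, which is edgeless, hence acyclic, so $\mathrm{gr}=\infty$. In case (b) I would describe $\overline{\mathscr P^*(\mathbb Z_{2p})}$ explicitly: its $\varphi(2p)=p-1$ generators are isolated vertices, its $p-1$ elements of order $p$ all generate the unique subgroup of order $p$ and so are pairwise non-adjacent, and the unique involution is adjacent to each of them (its order does not divide $p$ and conversely). Hence $\overline{\mathscr P^*(\mathbb Z_{2p})}\cong K_{1,p-1}\cup\overline K_{p-1}$, a forest, and $\mathrm{gr}=\infty$.

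Case (c) is the heart of the argument. By Theorem~\ref{c_3}, $\overline{\mathscr P^*(G)}$ is bipartite, hence $C_3$-free, so $\mathrm{gr}\ge 4$, and it remains to produce a $4$-cycle. I would work with the bipartition $(X,Y)$ from the proof of Theorem~\ref{c_3}, where $X$ is the set of elements of $q$-power order, and split into two subcases. If $p\ge 3$, then there are $\varphi(p)=p-1\ge 2$ elements $y_1,y_2$ of order $p$, while $|X|=q^m-1\ge 2$ because $q^m=2$ would force $G\cong\mathbb Z_{2p}$; choosing distinct $x_1,x_2\in X$ and using that a $q$-power-order element and a $p$-order element never lie in the cyclic group generated by the other, the four vertices $x_1,y_1,x_2,y_2$ form a $C_4$. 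If $p=2$, then $q$ is odd and $q^m$ is not prime (otherwise $G\cong\mathbb Z_{2q}$), so $m\ge 2$; here I would take $y_1$ to be the involution, $y_2$ an element of order $2q$ (a non-generator, since $2q<2q^m=|G|$), and $x_1,x_2$ two of the $\varphi(q^2)=q^2-q\ge 2$ elements of order $q^2$. Then $y_j\notin\langle x_i\rangle$ because $\langle x_i\rangle$ has odd order, and $x_i\notin\langle y_j\rangle$ because $\langle y_1\rangle$ has order $2$ while $\langle y_2\rangle$, of order $2q$, contains no element of order $q^2$; so $x_1,y_1,x_2,y_2$ again form a $C_4$ and $\mathrm{gr}=4$.

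Finally, in case (d), part~(1)$\Leftrightarrow$(2) of Theorem~\ref{c_3} shows that $\overline{\mathscr P^*(G)}$ is not $C_3$-free, hence contains a triangle, so $\mathrm{gr}=3$; combining the four cases gives the claim. I expect the main obstacle to be the $C_4$ construction in case (c) when $p=2$: one cannot find a $4$-cycle inside the subgroup $\mathbb Z_{2q}$ (whose complement of proper power graph is a star together with isolated vertices), which is precisely why the excluded group is $\mathbb Z_{2p}$ and why elements of order $q^2$ have to be brought in. Verifying the four adjacencies there is the delicate point, and it is done via the criterion ``$u\sim v$ iff $u\notin\langle v\rangle$ and $v\notin\langle u\rangle$'', together with the fact that in a cyclic group every subgroup is uniquely determined by its order.
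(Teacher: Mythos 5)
Your proof is correct and follows essentially the same route as the paper: both arguments reduce the problem to Theorem~\ref{c_3} (girth $3$ exactly when the graph is not $C_3$-free, i.e.\ when $G\not\cong\mathbb Z_{p^n},\mathbb Z_{pq^m}$), then check acyclicity versus the existence of a $C_4$ in the remaining cyclic cases. The only difference is one of detail: you spell out the explicit $4$-cycle (via elements of order $q^2$ and $2q$ when $p=2$, $m\ge 2$) where the paper merely asserts that a $C_4$ exists for $m>1$, and you make explicit the reading of the condition $q^m\neq 2$; both of these are welcome clarifications rather than a change of method.
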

\begin{proof}
	If $G\ncong \mathbb Z_{p^n},$ and $\mathbb Z_{pq^m},$ $n,m\geq 1$, then by Theorem~\ref{c_3}, $G$ contains $C_3$ as a subgraph. If $G\cong \mathbb Z_{p^n}$, then by (\ref{p^n}),  $\overline{\mathscr{P}^*(\mathbb Z_{p^n})}$ is acyclic. Now let $ G \cong \mathbb Z_{pq^m}$. If  $m\geq 1$, then  by Theorem~\ref{c_3}, $\overline{\mathscr{P}^*(\mathbb Z_{pq^m})}$ is bipartite;
	If $m>1$, then $G$ contains $C_4$ as a subgraph of $\overline{\mathscr{P}^*(G)}$; If $m=1$, then the non-trivial elements of $G$ are of orders one of $p,q,pq$. The elements of order $pq$ are generators of $G$, and hence they are isolated vertices in $\overline{\mathscr{P}^*(G)}$. Also the elements of order $p$ and $q$ are not a power of one another. Therefore,
	\begin{align}\label{pq}
	\overline{\mathscr{P}^*(\mathbb Z_{pq})} \cong K_{p-1,q-1}\cup \overline{K}_{(p-1)(q-1)},
	\end{align} which is acyclic, when $p=2$, and it contains $C_4$, when $p >2$. The proof follows from these facts.
\end{proof}

\section{ Embedding of the complement of proper power graphs of groups on topological surfaces}

The main results we prove in this section are the following:

\begin{thm}\label{planar}
	Let $G$ be a finite group and $p$ be prime. Then
	\begin{enumerate}[{\normalfont (1)}]
		\item  $\overline{\mathscr{P}^*(G)}$ is planar if and only if $G$ is one of  $\mathbb Z_{p^{\alpha}}, ~\mathbb Z_{12}, ~\mathbb Z_{2p}, ~\mathbb Z_{3p}$, $\mathbb Z_{2}\times \mathbb Z_{2}$, $Q_8$, $S_3$;
		\item $\overline{\mathscr{P}^*(G)}$ is toroidal if and only if $G$ is  one of  $\mathbb Z_{18}$, $\mathbb Z_{20}$, $\mathbb Z_{28}$, $\mathbb Z_{3}\times \mathbb Z_{3}$, $\mathbb Z_{2}\times \mathbb  Z_{2} \times \mathbb Z_{2}$,  $\mathbb Z_{4}\times \mathbb Z_{2}$, $D_8$;
		\item $\overline{\mathscr{P}^*(G)}$ is projective if and only if $G$ is  one of  $\mathbb Z_{20}$, $\mathbb Z_{4}\times \mathbb Z_{2}$, $D_8$.
	\end{enumerate}
\end{thm}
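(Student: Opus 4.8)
The plan is to handle all three claims together through a single structural case analysis on $G$, leaning on two levers. First, monotonicity of $\gamma$ and $\overline{\gamma}$ under subgraphs (Section~\ref{sec:pre}): exhibiting one "forbidden" subgraph inside $\overline{\mathscr{P}^*(G)}$ removes $G$ from all three lists simultaneously. Second, every forbidden subgraph we need is complete or complete bipartite, so its genus and crosscap are read off from Theorem~\ref{gamma(K_n)} (for instance $K_8$, $K_{4,5}$, $K_{3,7}$ have $\gamma\ge 2$, while $K_7$, $K_{3,5}$, $K_{3,6}$ have $\overline{\gamma}\ge 2$, and $K_5$, $K_{3,3}$ already break planarity). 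For the finitely many survivors we identify $\overline{\mathscr{P}^*(G)}$ explicitly — usually a complete multipartite graph via (\ref{p^n})--(\ref{pq}) — and then either quote a known value or build an embedding.

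The first task is to bound the arithmetic of $|G|$ and the Sylow subgroups. If $|G|$ has $k\ge 3$ prime divisors, Cauchy gives elements of three distinct prime orders (one $\ge 5$); pairing these with elements of composite order $p_ip_j$, or with the clique of involutions in dihedral-type cases, produces a complete or complete bipartite subgraph of genus and crosscap at least $2$, so $k\ge 3$ never occurs. If some Sylow $p$-subgroup is non-cyclic, Theorem~\ref{pre}(i)--(iii) supplies many cyclic subgroups of order $p$, or (for $p=2$) many involutions or cyclic subgroups of order $4$, giving $K(p+1,p-1)\supseteq K_5$ already when $p\ge 5$ and forcing a genus-$\ge 2$ subgraph unless $|G|$ and the relevant exponents are tiny. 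This finishes the $p$-group case directly: $\overline{\mathscr{P}^*(\mathbb Z_{p^\alpha})}\cong\overline K_{p^\alpha-1}$ by (\ref{p^n}) is always planar; for $p=3$ only $\mathbb Z_3\times\mathbb Z_3$ survives, and $\overline{\mathscr{P}^*(\mathbb Z_3\times\mathbb Z_3)}\cong K(4,2)\cong K_{2,2,2,2}$ triangulates the torus ($\gamma=1$) but by the Euler inequality $2E=48<3F=51$ does not embed in the projective plane; for $p=2$ one checks $\mathbb Z_2^2$ ($K_3$) and $Q_8$ ($K_{2,2,2}\cup K_1$), both planar, $\mathbb Z_2^3$ ($K_7$: $\gamma=1$, $\overline{\gamma}=3$ by Theorem~\ref{gamma(K_n)}), $\mathbb Z_4\times\mathbb Z_2$ (an octahedron $K_{2,2,2}$ with one further vertex joined to an antipodal pair: non-planar, toroidal, projective), and $D_8$ ($\cong K_7$ minus a triangle: it contains $K_5$, so non-planar, it lies in $K_7$, so $\gamma=1$, and one exhibits a triangulation of the projective plane, giving $\overline{\gamma}=1$); every larger $2$-group contains $K_8$ or, when $G\cong Q_{2^m}$ with $m\ge 4$, a genus-$\ge 2$ bipartite subgraph between its cyclic maximal subgroup and the outside order-$4$ elements.

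For $k=2$, write $|G|=p^nq^m$. When $G$ is cyclic I would identify $\overline{\mathscr{P}^*(\mathbb Z_n)}$ with the blow-up of the divisibility-incomparability graph on the proper nontrivial divisors of $n$: each divisor $d$ becomes an independent set of size $\varphi(d)$, and two such sets are completely joined exactly when neither divisor divides the other. An elementary case split on the exponents of $n$ shows this blow-up contains a $K_{4,5}$ or a $K_{3,7}$ (hence $\gamma\ge 2$) unless $n\in\{12,\,2p,\,3p,\,18,\,20,\,28\}$, of which $\mathbb Z_{12},\mathbb Z_{2p},\mathbb Z_{3p}$ are planar (here $\overline{\mathscr{P}^*}$ is a short path of complete bipartite pieces, $K_{1,p-1}\cup\overline K_{p-1}$, or $K_{2,p-1}\cup\overline K_{2(p-1)}$) and $\mathbb Z_{18},\mathbb Z_{20},\mathbb Z_{28}$ are toroidal; among the last three, $\overline{\mathscr{P}^*(\mathbb Z_{18})}$ and $\overline{\mathscr{P}^*(\mathbb Z_{28})}$ contain $K_{3,6}$ (forced by $\varphi(9)=\varphi(7)=6$), so $\overline{\gamma}\ge 2$, whereas $\overline{\mathscr{P}^*(\mathbb Z_{20})}$ has no side-$3$ complete bipartite subgraph bigger than $K_{3,4}$ (the unique involution sees only the $\varphi(5)=4$ elements of order $5$), so it can still be projective. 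When $G$ is non-cyclic of order $p^nq^m$, (\ref{zpp}) and (\ref{zqzp}) exhibit $K(p+1,p-1)$ or $K(q,p-1)+\overline K_{q-1}$ inside $\overline{\mathscr{P}^*(G)}$, which is non-toroidal and non-projective except when $|G|=6$, leaving $S_3$ with $\overline{\mathscr{P}^*(S_3)}\cong K_5-e$, planar (Figure~\ref{S3}); $D_8$ has already been dealt with.

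The main obstacle is the verification half, not the elimination half. Every "non-embeddable" conclusion falls out for free by pointing to $K_5$, $K_{3,3}$, $K_8$, $K_{4,5}$, $K_{3,7}$, $K_7$, $K_{3,6}$, or $K_{3,5}$ inside $\overline{\mathscr{P}^*(G)}$ and quoting Theorem~\ref{gamma(K_n)}; but the matching $\gamma=1$ and $\overline{\gamma}=1$ statements for the seven toroidal and three projective groups demand explicit embeddings, and the Euler formula shows several of these are tight — $K_7$ and $K_{2,2,2,2}$ must triangulate the torus, $K_7$ minus a triangle must triangulate the projective plane, and the near-bipartite graphs coming from $\mathbb Z_{18},\mathbb Z_{20},\mathbb Z_{28}$ must be embedded on the torus (with $\overline{\mathscr{P}^*(\mathbb Z_{28})}$ a quadrangulation of the torus and $\overline{\mathscr{P}^*(\mathbb Z_{20})}$ a quadrangulation of the projective plane) — so there is no slack and the rotation systems or face lists have to be pinned down exactly, most delicately for $D_8$, $\mathbb Z_4\times\mathbb Z_2$, and $\mathbb Z_{20}$. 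A lesser nuisance is making the cyclic-group step uniform in $n$, which is routine but requires patience with the divisor blow-up.
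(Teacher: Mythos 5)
Your overall strategy is the paper's strategy: eliminate almost everything by locating a complete or complete bipartite subgraph whose genus and crosscap number exceed $1$ (read off from Theorem~\ref{gamma(K_n)}), then supply explicit embeddings for the handful of survivors. Most of your case analysis matches the paper's, and the verification half you flag as the "main obstacle" is indeed where the paper spends its figures. But there is one genuine gap, and it sinks the claim that \emph{every} non-embeddability conclusion "falls out for free" from a complete or complete bipartite subgraph.

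The problem is the non-cyclic abelian group $\mathbb Z_2\times\mathbb Z_6$ of order $12$. It is in none of the three lists, so you must show $\gamma>1$ for it; yet your stated mechanism cannot do so. Your claim that for non-cyclic $G$ of order $p^nq^m$ the formulas (\ref{zpp}) and (\ref{zqzp}) exhibit a non-toroidal subgraph "except when $|G|=6$" fails here: the Sylow $2$-subgroup is $\mathbb Z_2\times\mathbb Z_2$, so (\ref{zpp}) only yields $K(3,1)=K_3$. In fact $\overline{\mathscr{P}^*(\mathbb Z_2\times\mathbb Z_6)}$ is exactly $K_{3,3,3}$ (on the three sets $\{t_i,a_i,a_i^{-1}\}$, where $t_i$ is the involution inside the $i$-th cyclic subgroup of order $6$) together with the two elements of order $3$, each joined to the triangle $\{t_1,t_2,t_3\}$. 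Its largest complete subgraph is $K_4$ and its largest complete bipartite subgraph is $K_{3,6}$; both are toroidal, and the Euler count ($11-33+22=0$ with $3F=2E$) does not forbid a toroidal triangulation either. So no entry of Theorem~\ref{gamma(K_n)} excludes this group from the toroidal list, and your proposal would wrongly leave it there. The paper handles precisely this case with a separate argument (Lemma~\ref{K333}): the toroidal embedding of $K_{3,3,3}$ is unique and is a triangulation in which no two faces share the same three boundary vertices, so only one of the two order-$3$ elements can be inserted; hence the graph is non-toroidal. You need this lemma (or an equivalent ad hoc argument, e.g.\ a computer check or an Euler-type counting refinement) to close the gap; the $\overline{\gamma}>1$ half is fine since $K_{3,6}$ is already non-projective. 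A smaller, non-fatal slip: $\overline{\mathscr{P}^*(\mathbb Z_4\times\mathbb Z_2)}$ is $K_{2,2,1,1}$ plus a vertex joined to the two singleton classes, not the octahedron plus a vertex joined to an antipodal (i.e.\ non-adjacent) pair — the two order-$2$ elements other than $(2,0)$ are adjacent to each other.
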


As a consequence of this result, we deduce the following:
\begin{cor}\label{cor1}
	Let $G$ be a finite group. Then
	\begin{enumerate} [{\normalfont (1)}]
		\item $\overline{\mathscr{P}^*(G)}$ is neither a path nor a star;
		\item $\overline{\mathscr{P}^*(G)}$ is $C_n$ if and only if $n=3$ and $G\cong \mathbb Z_2\times \mathbb Z_2$.
		\item $\overline{\mathscr{P}^*(G)}$ does not contain $K_{1,4}$ as a subgraph if and only if $G$ is either $\mathbb Z_{p^n}$, $\mathbb Z_6$ or $\mathbb Z_2\times \mathbb Z_2$;
		\item The following are equivalent:
		\begin{enumerate}[{\normalfont (a)}]
			\item $G$ is one of  $\mathbb Z_{p^n}, \mathbb Z_2\times \mathbb Z_2$ or $\mathbb Z_{2p}$;
			\item  $\overline{\mathscr{P}^*(G)}$ is outerplanar;
			\item  $\overline{\mathscr{P}^*(G)}$ does not contain $K_{2,3}$ as a subgraph.
		\end{enumerate}
	\end{enumerate}
\end{cor}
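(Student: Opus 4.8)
The plan is to deduce all four parts from Theorem~\ref{planar}, the structural results of Section~3, and the explicit descriptions of $\overline{\mathscr P^*(G)}$ recorded in \eqref{p^n}, \eqref{zpp}, \eqref{zqzp}, \eqref{pq} and in Figures~\ref{S3} and \ref{Q8}. For parts (1) and (2) I would use that a path, a star, and a cycle — each of which has at least one edge and hence no isolated vertex — are connected planar graphs: if $\overline{\mathscr P^*(G)}$ is any one of them, then Theorem~\ref{planar}(1) restricts $G$ to the seven listed families, while by Corollary~\ref{isolated} the absence of an isolated vertex rules out $G\cong\mathbb Z_n$ and $G\cong Q_{2^m}$. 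The only groups meeting both requirements are $\mathbb Z_2\times\mathbb Z_2$ and $S_3$; by Theorem~\ref{complete}, $\overline{\mathscr P^*(\mathbb Z_2\times\mathbb Z_2)}\cong K_3$, and Figure~\ref{S3} shows that $\overline{\mathscr P^*(S_3)}$ is $K_5$ with one edge removed. Neither of these is a path or a star, and the only one of them that is a cycle is $K_3=C_3$; this settles (1) and (2).

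For part (3), the key observation is that $\overline{\mathscr P^*(G)}$ contains no $K_{1,4}$ as a subgraph precisely when its maximum degree is at most $3$. The three groups in the statement qualify: by \eqref{p^n} one has $\overline{\mathscr P^*(\mathbb Z_{p^n})}\cong\overline{K}_{p^n-1}$, while $\overline{\mathscr P^*(\mathbb Z_6)}\cong K_{1,2}\cup\overline{K}_2$ and $\overline{\mathscr P^*(\mathbb Z_2\times\mathbb Z_2)}\cong K_3$, of maximum degrees $0$, $2$, and $2$ respectively. For the converse I would use that if $v$ generates a cyclic subgroup maximal under inclusion, then $v$ is adjacent in $\overline{\mathscr P^*(G)}$ to every element outside $\langle v\rangle$, so its degree is exactly $|G|-|\langle v\rangle|$. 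When $G$ is non-cyclic, taking such a $\langle v\rangle$ of least order gives degree at least $4$ for every non-cyclic $G$ other than $\mathbb Z_2\times\mathbb Z_2$: this is automatic once $|G|\ge 8$, since then $|\langle v\rangle|\le|G|/2\le|G|-4$, and the non-cyclic groups of orders $4$ and $6$ are checked directly. When $G\cong\mathbb Z_n$ with $n$ not a prime power, let $q$ be the largest prime divisor of $n$ and $m$ the largest divisor of $n$ coprime to $q$; then an element of order $q$ has degree exactly $m-1$, which is at least $4$ unless $m\in\{2,3,4\}$, and in each of those three cases a short computation — using an element whose order is the full $2$-, $3$-, or $4$-part of $n$, and \eqref{pq} when $n=2q$ or $n=3q$ — produces a vertex of degree at least $4$, except when $n=6$. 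Hence the maximum degree is at most $3$ exactly for $\mathbb Z_{p^n}$, $\mathbb Z_6$, and $\mathbb Z_2\times\mathbb Z_2$.

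For part (4), I would note that outer-planarity is inherited by subgraphs and that $K_{2,3}$ is not outer-planar (it is $2$-connected with no Hamiltonian cycle), which gives (b)$\Rightarrow$(c); also outer-planar graphs are planar by definition. The implications (a)$\Rightarrow$(b) and (a)$\Rightarrow$(c) follow from \eqref{p^n} and \eqref{pq}, since $\overline{K}_{p^n-1}$, $K_3$, and $K_{1,p-1}\cup\overline{K}_{p-1}$ are respectively edgeless, a triangle, and a star together with isolated vertices, each outer-planar and each containing no $K_{2,3}$-subgraph. For (c)$\Rightarrow$(a) I would split on Theorem~\ref{planar}(1): if $G$ lies outside the seven families there, then $\overline{\mathscr P^*(G)}$ is non-planar and, from its explicit structure, in fact contains $K_5$ or $K_{3,3}$ — hence $K_{2,3}$ — as a subgraph; if instead $G$ is one of the seven families but is none of $\mathbb Z_{p^n}$, $\mathbb Z_2\times\mathbb Z_2$, $\mathbb Z_{2p}$, that is $G\in\{\mathbb Z_{12},\ \mathbb Z_{3p}\ (p\ge 5),\ Q_8,\ S_3\}$, then \eqref{zpp}, \eqref{zqzp}, \eqref{pq} and Figures~\ref{S3}, \ref{Q8} (with a direct check for $\mathbb Z_{12}$) exhibit a $K_{2,3}$-subgraph, for instance $\overline{\mathscr P^*(\mathbb Z_{3p})}$ contains $K_{2,p-1}$, $\overline{\mathscr P^*(Q_8)}$ contains $K_{2,2,2}$, and $\overline{\mathscr P^*(S_3)}$ is $K_5$ with one edge removed. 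This makes (a), (b), (c) equivalent.

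The main obstacle will be the converse of (3) together with the direction (c)$\Rightarrow$(a) of (4): pinning down the precise isomorphism type of $\overline{\mathscr P^*(G)}$ for the handful of boundary groups, running the small case split on the $q'$-part of $n$ in the cyclic case, and carefully tracking the overlaps among the families $\mathbb Z_{p^n}$, $\mathbb Z_{2p}$, $\mathbb Z_{3p}$ — for instance $\mathbb Z_6=\mathbb Z_{2\cdot 3}$ already occurs as a $\mathbb Z_{2p}$, and $\mathbb Z_4$ as both a $\mathbb Z_{p^n}$ and a $\mathbb Z_{2p}$.
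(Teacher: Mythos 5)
Your proposal is correct, and its skeleton is the same as the paper's: reduce to the list of groups with planar $\overline{\mathscr{P}^*(G)}$ from Theorem~\ref{planar}(1) and then inspect the explicit graphs in (\ref{p^n}), (\ref{pq}), (\ref{zpp}), (\ref{z22}) and Figures~\ref{Z12}, \ref{Q8}, \ref{S3}. The difference worth noting is in how the reduction is justified. The paper's proof rests on the single assertion that each of the six properties forces planarity; for ``no $K_{1,4}$ subgraph'' and ``no $K_{2,3}$ subgraph'' this is not a valid implication for arbitrary graphs (a subdivision of $K_{3,3}$ is non-planar yet contains neither), so the paper's argument is really relying, tacitly, on the fact that every non-planar case in Propositions~\ref{common}--\ref{p^mq^n} was exhibited via a $K_r$ ($r\geq 5$) or a $K_{m,n}$ with $m,n\geq 3$, each of which does contain $K_{1,4}$ and $K_{2,3}$. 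You handle this more carefully: for part (3) you bypass planarity altogether with a direct maximum-degree computation (degree of a generator of a maximal cyclic subgroup equals $|G|-|\langle v\rangle|$, plus the small case analysis on the $q'$-part of $n$ in the cyclic case), and for (4)(c)$\Rightarrow$(a) you explicitly note that the non-planar cases must be checked to contain genuine $K_{2,3}$ subgraphs rather than mere subdivisions. Your use of Corollary~\ref{isolated} to cut parts (1) and (2) down to $\mathbb Z_2\times\mathbb Z_2$ and $S_3$ is also a clean shortcut the paper does not make explicit. What the paper's terser route buys is brevity; what yours buys is a proof of part (3) that stands on its own and a correct logical foundation for the reduction in part (4). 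Both arrive at the same classifications, modulo the shared (and standard) convention that a path or star has at least one edge, without which $\overline{\mathscr{P}^*(\mathbb Z_2)}\cong K_1$ would be a degenerate counterexample to (1) for the paper as well.
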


First, we begin with the following result.

\begin{pro}\label{common}
	If $G$ is a finite group whose order has more than two distinct prime divisors, then $\gamma ({\overline{\mathscr{P}^*(G)}})>1$ and $\overline{\gamma} {(\overline{\mathscr{P}^*(G)})}>1$.
\end{pro}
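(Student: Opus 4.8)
The plan is to exhibit a complete bipartite subgraph of $\overline{\mathscr{P}^*(G)}$ large enough that Theorem~\ref{gamma(K_n)} already forces $\gamma>1$ and $\overline{\gamma}>1$; by the ``consequence'' part of that theorem, together with the monotonicity of (non)orientable genus under subgraphs noted in Section~\ref{sec:pre}, it suffices to find a copy of $K_{4,5}$ or of $K_{3,7}$, each of which has orientable genus $>1$ and nonorientable genus $>1$. The mechanism producing such subgraphs is the following remark: if $\pi$ is a set of primes dividing $|G|$, let $C_\pi$ be the set of elements of $G$ whose order is a prime in $\pi$, and $D_\pi$ the set of nontrivial elements whose order is coprime to every prime in $\pi$. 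Then in $\overline{\mathscr{P}^*(G)}$ every vertex of $C_\pi$ is adjacent to every vertex of $D_\pi$: for $c\in C_\pi$ and $d\in D_\pi$, the group $\langle c\rangle$ has prime order $\ell\in\pi$ while $o(d)>1$ is coprime to $\ell$, so $d\notin\langle c\rangle$ and $c\notin\langle d\rangle$. Hence $K_{|C_\pi|,|D_\pi|}$ is a subgraph, and the problem reduces to bounding $|C_\pi|$ and $|D_\pi|$ from below for a suitable $\pi$.

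First I would fix the three smallest primes $p_1<p_2<p_3$ dividing $|G|$, so $p_3\ge 5$, and take $\pi=\{p_3\}$. By Cauchy's theorem the number of elements of order $\ell$ is $(\ell-1)s_\ell$, where $s_\ell\ge 1$ is the number of subgroups of order $\ell$; in particular $|C_{\{p_3\}}|\ge p_3-1\ge 4$, and $|C_{\{p_3\}}|\ge 2(p_3-1)\ge 8$ once $s_{p_3}\ge 2$, while always $|D_{\{p_3\}}|\ge(p_1-1)+(p_2-1)\ge 3$. Several cases are then immediate: whenever $|C_{\{p_3\}}|\ge 7$ — which holds if $p_3\ge 11$ or if $s_{p_3}\ge 2$ — we obtain $K_{3,7}$; and whenever $p_2\ge 5$, or $|G|$ has a fourth prime divisor $p_4\ge 7$, the set $D_{\{p_3\}}$ already contains at least $5$ elements of small prime order (namely $(p_1-1)+(p_2-1)\ge 5$ in the first situation, and $(p_1-1)+(p_2-1)+(p_4-1)\ge 9$ in the second), so together with $|C_{\{p_3\}}|\ge 4$ this yields $K_{4,5}$.

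This leaves the core case: $|G|$ has exactly three prime divisors, namely $2$, $3$, and $p_3\in\{5,7\}$, with a unique subgroup of order $p_3$, so $C_{\{p_3\}}$ has exactly $p_3-1\in\{4,6\}$ vertices and it suffices to show $|D_{\{p_3\}}|\ge 5$. Here I would split on the Sylow structure at $2$ and $3$. If a Sylow $2$-subgroup has order $\ge 4$ or a Sylow $3$-subgroup has order $\ge 9$, that subgroup alone contributes at least three nontrivial elements to $D_{\{p_3\}}$, and adjoining the elements of the other of the two small prime orders pushes $|D_{\{p_3\}}|$ to at least $5$. Otherwise the Sylow $2$- and Sylow $3$-subgroups are $\mathbb Z_2$ and $\mathbb Z_3$, and then $|D_{\{p_3\}}|=n_2+2n_3+2t$, where $n_2$ (odd, $\ge 1$) and $n_3$ ($\equiv 1\pmod 3$, $\ge 1$) count the Sylow $2$- and Sylow $3$-subgroups and $t\ge 0$ counts the cyclic subgroups of order $6$. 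If $n_2\ge 3$ or $n_3\ge 4$ this is already $\ge 5$; and if $n_2=n_3=1$ both Sylow subgroups are normal, so $G$ contains a normal subgroup isomorphic to $\mathbb Z_2\times\mathbb Z_3\cong\mathbb Z_6$, forcing $t\ge 1$ and hence $|D_{\{p_3\}}|=3+2t\ge 5$. Combining all cases gives the result.

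The hard part is exactly this last case. The naive argument — take the elements of orders $p_1$, $p_2$, $p_3$ and use that the $p_3$-elements are mutually non-adjacent and all joined to the rest — only delivers $K_{3,4}$, which is both toroidal and projective-planar and hence useless here; one genuinely has to squeeze a few extra vertices out of the group, using Sylow-subgroup sizes, the parity and divisibility constraints on the numbers of Sylow subgroups, and the existence of an element of order $6$ when the Sylow $2$- and $3$-subgroups are both normal. Every other case is routine counting once the complete-bipartite-subgraph observation is in place.
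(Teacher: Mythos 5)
Your proof is correct, and it rests on the same underlying mechanism as the paper's: exhibit a complete bipartite subgraph ($K_{4,5}$ or $K_{3,7}$) between sets of elements of coprime orders and invoke Theorem~\ref{gamma(K_n)} together with the monotonicity of genus under subgraphs. What differs is the decomposition. The paper branches first on whether $p_1>2$, then on whether $p_2>3$, then on the exponents of $p_2$ and $p_3$ in $|G|$, and finally on uniqueness of the various Sylow subgroups, ending with the structural conclusion $G\cong P\times \mathbb Z_{3p_3}$ in the last branch; you instead fix $\pi=\{p_3\}$ once and for all, reduce everything to lower bounds on $|C_\pi|$ and $|D_\pi|$, and isolate a single residual case ($|G|=2^a3^bp_3^c$ with $p_3\in\{5,7\}$ and a unique subgroup of order $p_3$), which you settle with the Sylow congruences $n_2\equiv 1\pmod 2$, $n_3\equiv 1\pmod 3$ and the element of order $6$ forced when both small Sylow subgroups are normal. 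The two resolutions of the hard case are parallel in substance (the paper's ``all Sylow subgroups unique'' branch likewise extracts an element of composite order from the resulting direct-product structure), but your bookkeeping is tighter and more uniform, and it sidesteps the small infelicities in the paper's final branch (the condition ``$n_1\geq 1$'' there should read $n_1>1$, and the bipartition sizes quoted in a couple of sub-branches are not the sharpest available). Your closing remark --- that the naive coprime-order construction only yields $K_{3,4}$, which is both toroidal and projective-planar, so that extra vertices must genuinely be extracted from the Sylow structure --- identifies precisely the obstruction that drives the length of the paper's case analysis. I find no gap in your argument.
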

\begin{proof} Let $|G|=p_1^{n_1}p_2^{n_2}\cdots p_k^{n_k}$, where $p_i$'s are distinct primes, $n_i \geq 1$ and $k\geq 3$.
	We divide the proof in to the following cases:\\
	\noindent\textbf{Case 1.} If $k=3$, then without loss of generality, we assume that  $p_1<p_2<p_3$. Let us consider the following subcases. \\
	\noindent\textbf{Subcase 1a.} If $p_1>2$, then $G$ contains at least two elements of order $p_1$, at least four elements of order $p_2$, and at least six elements of order $p_3$. Then the elements of order $p_1^{\alpha_1}$ $(0< {\alpha}_1\leq n_1)$ and $p_2^{\alpha_2}$ $(0< {\alpha}_2\leq n_2)$ are adjacent to the elements of order $p_3^{\alpha_3}$ $(0< {\alpha}_3\leq n_3)$ in $\overline{\mathscr{P}^*(G)}$. Hence $\overline{\mathscr{P}^*(G)}$ contains $K_{5,6}$ as a subgraph, and so $\gamma ({\overline{\mathscr{P}^*(G)}})>1$ and $\overline{\gamma} {(\overline{\mathscr{P}^*(G)})}>1$.\\
	\noindent\textbf{Subcase 1b.} Let $p_1=2$. If $p_2>3$, then $G$ contains at least four elements of order $p_2$, and at least six elements of order $p_3$. Then the elements of order 2 and $p_2$ are adjacent to the elements of order $p_3$. Thus $\overline{\mathscr{P}^*(G)}$ contains $K_{5,4}$ as a subgraph, and so $\gamma ({\overline{\mathscr{P}^*(G)}})>1$ and $\overline{\gamma} {(\overline{\mathscr{P}^*(G)})}>1$.
	
	If $p_2=3$, and either $n_2 \geq 2$ or $n_3 \geq 2$, then $\overline{\mathscr{P}^*(G)}$ contains $K_{3,7}$ as a subgraph, so $\gamma ({\overline{\mathscr{P}^*(G)}})>1$ and $\overline{\gamma} {(\overline{\mathscr{P}^*(G)})}>1$. Now, we assume that $n_2=n_3=1$. Suppose for some $i~(i=1,2,3)$,  the Sylow $p_i$-subgroup is not unique. If $i=1$, then $G$ contains at least three elements of order 2. Then the elements of order 2 and $3$ are adjacent to the elements of order $p_3$. Hence $\overline{\mathscr{P}^*(G)}$ contains $K_{5,4}$ as a subgraph. If $i=2$ or $3$, then $G$ contains at least 8 elements of order $p_i$, so $\overline{\mathscr{P}^*(G)}$ contains $K_{3,7}$ as a subgroup.  Suppose for each $i$, Sylow $p_i$-
	subgroup of $G$ is unique, then $G\cong P\times \mathbb Z_{3}.{p_3}$, where $P$ is the
	Sylow $2$-subgroup of order $2^{n_1}$. If $n_1=1$, then $G\cong \mathbb Z_{6}.{p_3}$. In this case, $\overline{\mathscr{P}^*(G)}$  contains $K_{5,4}$ as a subgraph. If $n_1\geq 1$, then $\overline{\mathscr{P}^*(G)}$ contains $K_{3,7}$ as a subgraph. In both the cases, we have  $\gamma ({\overline{\mathscr{P}^*(G)}})>1$ and $\overline{\gamma} {(\overline{\mathscr{P}^*(G)})}>1$. \\
	\noindent\textbf{Case 2.} Let $k\geq 4$.  Let $p_i,$ $p_j,$ $p_r>2$, for some $i,j, r$. Then the elements of order $p_i$ and $p_j$ are adjacent to the elements of order $p_r$ in $\overline{\mathscr{P}^*(G)}$. Thus $\overline{\mathscr{P}^*(G)}$ contains $K_{3,7}$ as a subgraph, and so $\gamma ({\overline{\mathscr{P}^*(G)}})>1$ and $\overline{\gamma} {(\overline{\mathscr{P}^*(G)})}>1$. 			
	
	Proof follows by combining the above cases together.
\end{proof}

Proposition~\ref{common} reveals that, to prove the main result, it is enough to deal with the groups whose order has at most two distinct prime divisors. For this purpose, first we consider the finite cyclic groups, then we deal with the finite non-cyclic groups.
\begin{pro}\label{cyclic}
	Let $G$ be a finite cyclic group and $p$ be a prime. Then
	\begin{enumerate}[{\normalfont (1)}]
		\item $\overline{\mathscr{P}^*(G)}$ is planar if and only if $G$ is  one of $\mathbb Z_{p^{\alpha}}, ~\mathbb Z_{12}, ~\mathbb Z_{2p}, ~\mathbb Z_{3p}$;
		\item $\overline{\mathscr{P}^*(G)}$ is toroidal if and only if $G$ is one of  $\mathbb Z_{18}$, ~$\mathbb Z_{20}$, ~$\mathbb Z_{28}$;
		\item $\overline{\mathscr{P}^*(G)}$ is projective if and only if $G \cong \mathbb Z_{20}$.
	\end{enumerate}
\end{pro}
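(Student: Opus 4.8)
The plan is to read off $\overline{\mathscr{P}^*(\mathbb{Z}_n)}$ from the divisor lattice of $n$. In a cyclic group, $u\in\langle v\rangle$ iff $o(u)\mid o(v)$, so two nontrivial elements are adjacent in $\overline{\mathscr{P}^*(\mathbb{Z}_n)}$ precisely when their orders are incomparable under divisibility; hence $\overline{\mathscr{P}^*(\mathbb{Z}_n)}$ is obtained from the incomparability graph of the poset of divisors $d>1$ of $n$ by blowing up each divisor $d$ into an independent set of $\varphi(d)$ vertices. I will also use that $H\le \mathbb{Z}_n$ implies $\overline{\mathscr{P}^*(H)}$ is an induced subgraph of $\overline{\mathscr{P}^*(\mathbb{Z}_n)}$, together with the explicit forms $\overline{\mathscr{P}^*(\mathbb{Z}_{p^{\alpha}})}\cong\overline{K}_{p^{\alpha}-1}$ from (\ref{p^n}) and $\overline{\mathscr{P}^*(\mathbb{Z}_{pq})}\cong K_{p-1,q-1}\cup\overline{K}_{(p-1)(q-1)}$ from (\ref{pq}). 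By Proposition~\ref{common}, only $n=p^{\alpha}$ and $n=p^{a}q^{b}$ need to be considered.

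For the groups in the list I argue directly. The graph $\overline{\mathscr{P}^*(\mathbb{Z}_{p^{\alpha}})}$ is edgeless, hence planar. From the divisor lattice, $\overline{\mathscr{P}^*(\mathbb{Z}_{2p})}\cong K_{1,p-1}\cup\overline{K}_{p-1}$ and $\overline{\mathscr{P}^*(\mathbb{Z}_{3p})}\cong K_{2,p-1}\cup\overline{K}_{2(p-1)}$, which are planar since every $K_{1,n}$ and $K_{2,n}$ is; $\overline{\mathscr{P}^*(\mathbb{Z}_{12})}$ consists of four isolated vertices together with a graph built from $K_{1,2}$, $K_{2,2}$, $K_{2,2}$ glued consecutively along shared parts, which has at most four vertices of degree $\ge 3$ and at most two of degree $\ge 4$, hence contains no subdivision of $K_{3,3}$ or $K_5$ and is planar by Kuratowski's theorem. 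For $\mathbb{Z}_{18}$, $\mathbb{Z}_{20}$, $\mathbb{Z}_{28}$ I will write the graphs down explicitly: each is a copy of $K_{3,k}$ (with $k=6,4,6$ respectively) decorated with either two pendant vertices or a further $K_{2,k}$ attached along the part of size $k$, so each contains $K_{3,3}$ and is non-planar; I will then exhibit toroidal embeddings for all three and a projective-planar embedding for $\mathbb{Z}_{20}$. That $\mathbb{Z}_{18}$ and $\mathbb{Z}_{28}$ fail to be projective follows since they contain $K_{3,6}$, which has $\overline{\gamma}(K_{3,6})=2$ by Theorem~\ref{gamma(K_n)}.

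For the converse I eliminate every remaining cyclic group by locating a forbidden complete bipartite subgraph. If $n=pq$ with $p,q\ge 5$, then $\overline{\mathscr{P}^*(\mathbb{Z}_n)}\supseteq K_{4,6}$, so $\gamma>1$ and $\overline{\gamma}>1$ by Theorem~\ref{gamma(K_n)}. If $n=p^{a}q^{b}$ with $a+b\ge 3$ and $n\notin\{12,18,20,28\}$, I choose a divisor $d$ of $n$: the $\varphi(d)$ vertices of order $d$, together with all vertices whose order is incomparable to $d$, induce a complete bipartite subgraph $K_{\varphi(d),\,m}$ with $m=\sum_{e\perp d}\varphi(e)$. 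A short case analysis on $(p,q,a,b)$ shows that $d$ can always be chosen so that $(\varphi(d),m)$ meets one of the thresholds "$\varphi(d)\ge 3$ and $m\ge 7$", "$\varphi(d)\ge 4$ and $m\ge 5$", or "$\varphi(d)\ge 5$ and $m\ge 5$"; in each case $\gamma(K_{\varphi(d),m})>1$ and $\overline{\gamma}(K_{\varphi(d),m})>1$ by Theorem~\ref{gamma(K_n)}. For instance one takes $d=q$ when $p=2$ and $a\ge 3$, the top prime-power of one factor when $b\ge 2$, and $d=r^{c}$ for the sporadic family $n=2r^{c}$ with $r$ odd.

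The main difficulty is the four borderline groups $\mathbb{Z}_{12}$, $\mathbb{Z}_{18}$, $\mathbb{Z}_{20}$, $\mathbb{Z}_{28}$: one must determine their orientable and nonorientable genera exactly, which forces producing concrete embeddings -- on the torus for $\mathbb{Z}_{18},\mathbb{Z}_{20},\mathbb{Z}_{28}$, and on the projective plane for $\mathbb{Z}_{20}$, where the relevant graph is bipartite with $|E|=2|F|$ so the embedding is tight (every face a quadrilateral) -- while at the same time using Kuratowski's theorem and the genus formulas of Theorem~\ref{gamma(K_n)} to certify that these graphs lie exactly at genus $0,1,1,1$ and no higher. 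The rest is bookkeeping over the divisor poset, the only real care being to confirm that no $n=p^{a}q^{b}$ with $a+b\ge 3$ other than those four escapes the threshold argument.
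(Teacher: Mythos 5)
Your proposal is correct and follows essentially the same route as the paper: reduce to at most two prime divisors via Proposition~\ref{common}, observe that $\overline{\mathscr{P}^*(\mathbb Z_{p^{\alpha}})}$ is edgeless, locate complete bipartite subgraphs $K_{\varphi(d),m}$ between elements of incomparable orders to exceed the genus thresholds of Theorem~\ref{gamma(K_n)}, and settle the four borderline groups $\mathbb Z_{12},\mathbb Z_{18},\mathbb Z_{20},\mathbb Z_{28}$ by explicit embeddings (your divisor-poset/blow-up framing just makes the paper's implicit bookkeeping systematic). One correction to your sample choices: for $n=2^{a}\cdot 3$ with $a\geq 3$ the choice $d=q=3$ gives only $\varphi(3)=2$, hence a planar $K_{2,m}$; you must instead take $d=2^{a}$, which yields $K_{2^{a-1},\,2(2^{a}-1)}\supseteq K_{4,5}$ (the paper uses the elements of order $8$ against those of orders $q,2q,4q$ to the same effect).
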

\begin{proof}
	Let $|G|$ has $k$ distinct prime divisors. Now we divide the proof into the following cases. \\
	\noindent\textbf{Case 1.} If $k=1$, then by~(\ref{p^n}), $\overline{\mathscr{P}^*(G)}$ is planar.\\
	\noindent\textbf{Case 2.} Let $k=2$. Let $H$ and $K$ be subgroups of $\mathbb Z_{p^nq^m}$ of order $p^n$ and $q^m$, respectively, where $n,m\geq 1$. The order of each non-trivial element in $H$ is relatively prime to the non-trivial elements in $K$. So no element in $H$ is not a power of any element in $K$ and vice versa. Therefore, $\overline{\mathscr{P}^*(G)}$ contains $K_{p^n-1,q^m-1}$ as a subgraph.
	If $n,m\geq 2$, then $\overline{\mathscr{P}^*(G)}$ contains $K_{3,7}$ as a subgraph, so  $\gamma ({\overline{\mathscr{P}^*(G)}})>1$ and $\overline{\gamma} {(\overline{\mathscr{P}^*(G)})}>1$. Now, we assume that either $n=1$ or $m=1$. Without loss of generality, we assume that $m=1$. Then $|G|=p^nq$, $n\geq 1$. We need to consider the following subcases: \\
	\noindent\textbf{Subcase 2a.} If $n=1$, then $G\cong \mathbb Z_{pq}$. By~(\ref{pq}), $\gamma ({\overline{\mathscr{P}^*(G)}})>1$ and $\overline{\gamma} {(\overline{\mathscr{P}^*(G)})}>1$ if $p,q\geq 5$; otherwise, $\overline{\mathscr{P}^*(G)}$ is planar.	 \\
	\noindent\textbf{Subcase 2b.} Let $n\geq 2$.\\
	\noindent\textbf{Subcase 2b(i).} 	
	Let $p=2$. If $n>2$, then $G$ contains four elements of order $8$, which are not a power of any elements of orders $q$, $2q$, $4q$ and vice versa. Hence $\overline{\mathscr{P}^*(G)}$ contains $K_{4,5}$ as a subgraph.
	Therefore, $\gamma ({\overline{\mathscr{P}^*(G)}})>1$ and $\overline{\gamma} {(\overline{\mathscr{P}^*(G)})}>1$.
	
	\begin{figure}[ht]
		\begin{center}
			\includegraphics[scale=1]{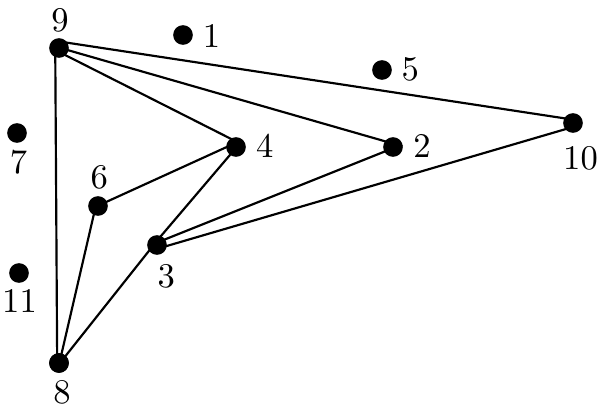}\caption{A plane embedding of $\overline{\mathscr{P}^*(\mathbb Z_{12})}.$} \label{Z12}
		\end{center}
	\end{figure}
	
	\begin{figure}[ht]
		\begin{center}
			\includegraphics[scale=1]{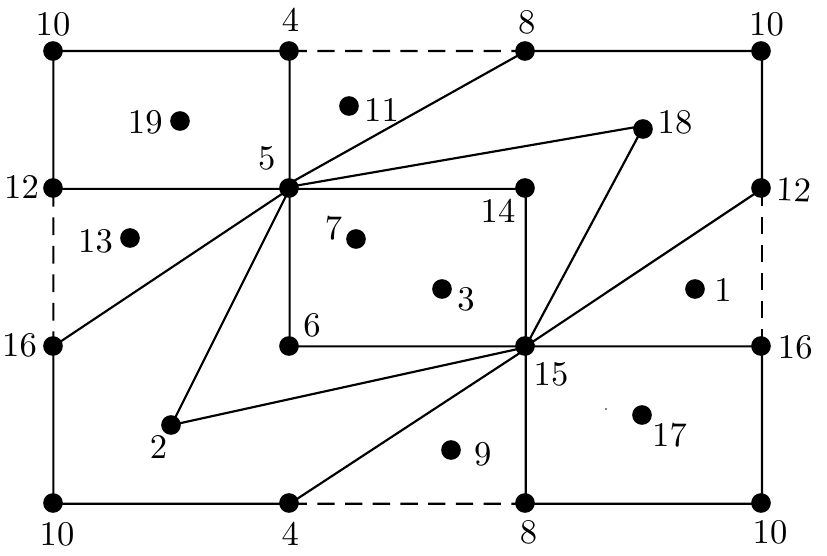}\caption{A toroidal embedding of $\overline{\mathscr{P}^*(\mathbb Z_{20})}.$} \label{Z_20}.
		\end{center}
	\end{figure}
	
	If $n=2$, then in $G$, the elements of order 2, 4 are not a power of elements of order $q$ and vice versa. Also the element of order $2$ is a power of the elements of order $2q$; the elements of order $2q$ and 4 are not a power of each other. It follows that if $q=3$, then $\overline{\mathscr{P}^*(G)}$ is planar, and a plane embedding of $\overline{\mathscr{P}^*(G)}$ is shown in Figure~\ref{Z12}.

	\begin{figure}[ht]
		\begin{center}
			\includegraphics[scale=1]{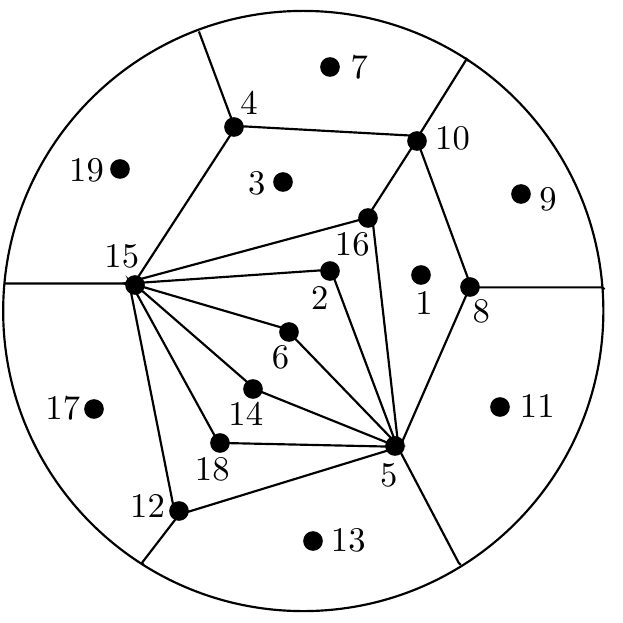}\caption{A projective embedding of $\overline{\mathscr{P}^*(\mathbb Z_{20})}.$} \label{Z_20P}.
		\end{center}
	\end{figure}	
	\begin{figure}[ht]
		\begin{center}
			\includegraphics[scale=1]{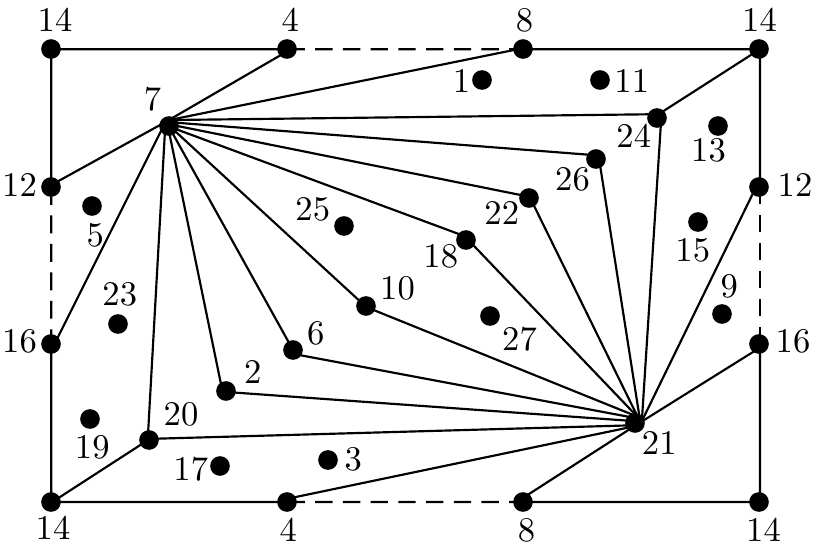}\caption{A toroidal embedding of $\overline{\mathscr{P}^*(\mathbb Z_{28})}.$} \label{Z_28}
		\end{center}
	\end{figure}
	If $q=5$, then $\gamma ({\overline{\mathscr{P}^*(G)}})=1$, $\overline{\gamma} {(\overline{\mathscr{P}^*(G)})}=1$, and a toroidal and a projective embeddings of $\overline{\mathscr{P}^*(G)}$ are shown in Figures~\ref{Z_20} and ~\ref{Z_20P}, respectively.
	If $q=7$, then $\overline{\mathscr{P}^*(G)}$ contains $K_{3,6}$ as a subgraph, so $\overline{\gamma} {(\overline{\mathscr{P}^*(G)})}>1$, $\gamma( {\overline{\mathscr{P}^*(G)})}=1$	
	and a toroidal embedding of $\overline{\mathscr{P}^*(G)}$ is shown in Figure~\ref{Z_28}. If $q>7$, then $\overline{\mathscr{P}^*(G)}$ contains $K_{3,7}$ as a subgraph, so $\overline{\gamma} {(\overline{\mathscr{P}^*(G)})}>1$ and $\gamma {(\overline{\mathscr{P}^*(G)})}>1$.
	
	\noindent\textbf{Subcase 2b(ii).}  Let $p=3$. If $n>2$, then $G$ contains eighteen elements of order $27$, which are not a power of any elements of orders $q,$ $3q,$ $9q$, and vice versa. Hence $\overline{\mathscr{P}^*(G)}$ contains $K_{4,5}$ as a subgraph. Therefore, $\gamma ({\overline{\mathscr{P}^*(G)}})>1$ and $\overline{\gamma} {(\overline{\mathscr{P}^*(G)})}>1$. If $n=2$, then in $G$, the elements of order 3, 9 are not a power of elements of order $q$, and vice versa. Also the element of order $3$ is a power of the elements of order $3q$; the elements of order $3q$ and 9 are not a power of each other. It follows that if $q=2$, then $\overline{\mathscr{P}^*(G)}$ contains $K_{3,6}$ as a subgraph, so $\overline{\gamma} {(\overline{\mathscr{P}^*(G)})}>1$; but $\gamma ({\overline{\mathscr{P}^*(G)}})=1$, and a toroidal embedding of $\overline{\mathscr{P}^*(G)}$ is shown in Figure \ref{Z18}.
	\begin{figure}[ht]
		\begin{center}
			\includegraphics[scale=1]{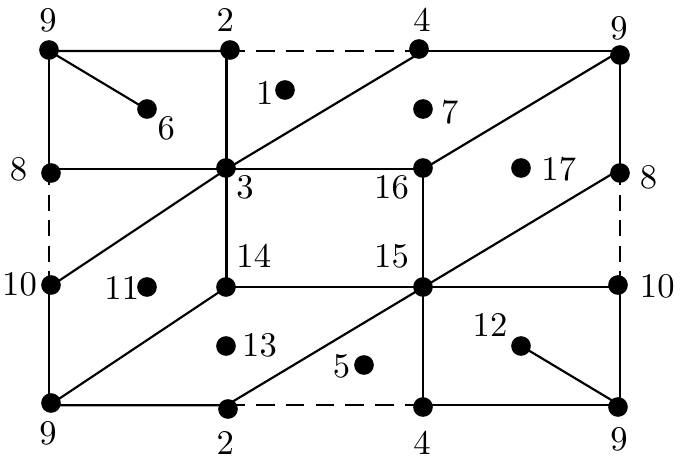}\caption{A toroidal embedding of $\overline{\mathscr{P}^*(\mathbb Z_{18})}.$} \label{Z18}
		\end{center}
	\end{figure}
	If $q\geq 5$, then $\overline{\mathscr{P}^*(G)}$ contains $K_{5,4}$ as a subgraph, so $\gamma ({\overline{\mathscr{P}^*(G)}})>1$ and $\overline{\gamma} {(\overline{\mathscr{P}^*(G)})}>1$. \\
	\noindent\textbf{Subcase 2b(iii)}: If $p\geq 5$, then the elements in $G$ of order $q$ and $pq$ are not a power of the elements of order $p^2$, and vise versa. Note that $G$ contains at least one element of orders $q$, at least four elements of order $pq$, and at least twenty elements of order $p^2$. It follows that $\overline{\mathscr{P}^*(G)}$ contains $K_{3,7}$ as a subgraph, so $\gamma ({\overline{\mathscr{P}^*(G)}})>1$ and $\overline{\gamma} {(\overline{\mathscr{P}^*(G)})}>1$. \\
	\noindent\textbf{Case 3.} Let $k\geq 3$. Then by Proposition~\ref{common}, $\gamma ({\overline{\mathscr{P}^*(G)}})>1$ and $\overline{\gamma} {(\overline{\mathscr{P}^*(G)})}>1$. \\
	Combining all the above cases  together, the proof follows.
\end{proof}
\begin{pro}\label{p^m}
	Let $G$ be a finite non-cyclic group of order ${p}^{\alpha}$, where $p$ is a prime and $\alpha \geq 2$. Then
	\begin{enumerate}[{\normalfont (1)}]
		\item $\overline{\mathscr{P}^*(G)}$ is planar if and only if $G$ is  either $\mathbb Z_{2}\times \mathbb Z_{2}$ or $Q_8$;
		\item $\overline{\mathscr{P}^*(G)}$ is toroidal if and only if $G$ is  one of $\mathbb Z_{3}\times \mathbb Z_{3}$, $\mathbb Z_{2}\times \mathbb  Z_{2} \times \mathbb Z_{2}$,  $\mathbb Z_{8}\times \mathbb Z_{2}$ or $D_8$;
		\item $\overline{\mathscr{P}^*(G)}$ is projective if and only if $G$ is  one of $\mathbb Z_{4}\times \mathbb Z_{2}$ or $D_8$.
	\end{enumerate}
\end{pro}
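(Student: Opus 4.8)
The plan is to split on the prime $p$ dividing $|G|$ and, in each case, cut the problem down to a short list of small groups by exhibiting forbidden subgraphs. Two facts are used repeatedly. First, for $H\le G$ the graph $\overline{\mathscr{P}^*(H)}$ is an induced subgraph of $\overline{\mathscr{P}^*(G)}$, since adjacency depends only on the cyclic subgroups generated by the two vertices. Second, if $C_1,\dots,C_t$ are pairwise distinct cyclic subgroups of $G$ all of the same prime-power order $d$, then a generator of $C_i$ is adjacent to a generator of $C_j$ for all $i\neq j$, so $\overline{\mathscr{P}^*(G)}$ contains $K(t,\varphi(d))$; in particular $t$ subgroups of prime order yield a clique $K_t$. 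A third, ad hoc, device is that an involution $v$ is non-adjacent only to those $u$ with $v\in\langle u\rangle$, so $v$ may be adjoined to a collection of higher-order elements whose cyclic subgroups miss $v$. Whenever $\overline{\mathscr{P}^*(G)}$ is seen to contain one of $K_8,K_9,K_{11},K_{15},K_{4,5},K_{4,6},K_{4,8},K_{6,6}$, Theorem~\ref{gamma(K_n)} yields $\gamma(\overline{\mathscr{P}^*(G)})>1$ and $\overline{\gamma}(\overline{\mathscr{P}^*(G)})>1$, so $G$ belongs to none of the three classes.

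For $p\ge5$, Theorem~\ref{pre}(i) gives $\mathbb Z_p\times\mathbb Z_p\le G$, so by~(\ref{zpp}) $\overline{\mathscr{P}^*(G)}$ contains $K(p+1,p-1)\supseteq K(6,4)\supseteq K_{4,5}$ and $G$ is excluded. For $p=3$: if $G\cong\mathbb Z_3\times\mathbb Z_3$ then $\overline{\mathscr{P}^*(G)}\cong K(4,2)$ by~(\ref{zpp}); this graph has $8$ vertices and $24$ edges, and since a simple graph embeddable in the projective plane on $V$ vertices has at most $3V-3$ edges, it is not projective, while a direct drawing on the torus gives $\gamma=1$. If $|G|=3^\alpha$ with $\alpha\ge3$ then $G$ is, or (by Theorem~\ref{pre}(i)) contains, a non-cyclic group $H$ of order $27$; since $H$ has more than one subgroup of order $3$, Theorem~\ref{pre}(iii) forces exactly $4$ or $13$ of them, and in the first case $H$ has three cyclic subgroups of order $9$, so $\overline{\mathscr{P}^*(H)}$ contains $K_{13}$ or $K(3,6)\supseteq K_{6,6}$, whence $\gamma>1$.

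For $p=2$: if $G\cong\mathbb Z_2^m$ then $\overline{\mathscr{P}^*(G)}\cong K_{2^m-1}$ (Theorem~\ref{complete}), which by Theorem~\ref{gamma(K_n)} is planar for $m=2$, toroidal but not projective for $m=3$, and neither for $m\ge4$. If $G\cong Q_{2^\alpha}$ then $\overline{\mathscr{P}^*(Q_8)}$ is the octahedron $K(3,2)$ together with one isolated vertex (the unique involution), hence planar, while for $\alpha\ge4$ the subgroup $Q_{16}$ already produces $K_{4,8}$ — its four generators of the cyclic subgroup of order $8$ versus the eight elements of order $4$ outside it — so $\gamma>1$. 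If $G$ is none of these it has an element of order $4$, and the two remaining non-cyclic groups of order $8$, namely $\mathbb Z_4\times\mathbb Z_2$ and $D_8$, are handled directly: each $\overline{\mathscr{P}^*(G)}$ is a $7$-vertex graph containing a Kuratowski subgraph (so non-planar) while admitting explicit embeddings on the torus and on the projective plane (so $\gamma=\overline{\gamma}=1$).

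The remaining and hardest case is a non-cyclic $2$-group $G$ with $|G|\ge16$ other than $\mathbb Z_2^m$ and $Q_{2^\alpha}$, where one must show $\gamma(\overline{\mathscr{P}^*(G)})>1$ and $\overline{\gamma}(\overline{\mathscr{P}^*(G)})>1$. The reduction is that such a $G$ contains a non-cyclic subgroup $H$ of order $16$ — it equals one when $|G|=16$, and when $|G|\ge32$ this rests on the group-theoretic fact that a $2$-group all of whose subgroups of order $16$ are cyclic is itself cyclic — so it suffices to check the thirteen non-cyclic groups of order $16$. For each of them one locates inside $\overline{\mathscr{P}^*(G)}$ a complete or complete bipartite subgraph that Theorem~\ref{gamma(K_n)} rules out: $K_{15}$ for $\mathbb Z_2^4$, $K_{11}$ for $\mathbb Z_2\times D_8$ and $K_9$ for $D_{16}$ (all by counting involutions), and for the others a copy of $K_{4,5}$, $K_{4,6}$, $K_{4,8}$ or $K_{6,6}$ formed from the order-$4$ elements (which induce $K(t,2)$, with $t$ the number of cyclic subgroups of order $4$) together with the involutions lying in no cyclic subgroup of order $4$. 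Putting the three prime cases together proves the proposition. The delicate points are this order-$16$ enumeration — which, in contrast to the odd-prime case where $\mathbb Z_p\times\mathbb Z_p$ alone settles matters, genuinely uses the structure of $2$-groups — and the construction of the toroidal and projective embeddings of $\overline{\mathscr{P}^*(\mathbb Z_4\times\mathbb Z_2)}$ and $\overline{\mathscr{P}^*(D_8)}$, which lie right at the threshold of (non-)embeddability.
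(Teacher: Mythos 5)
Your overall strategy coincides with the paper's: reduce odd primes to $\mathbb{Z}_p\times\mathbb{Z}_p$ and to non-cyclic groups of order $27$, dispose of $\mathbb{Z}_2^m$, $Q_{2^\alpha}$ and the order-$8$ groups directly (with explicit toroidal and projective embeddings for $\mathbb{Z}_4\times\mathbb{Z}_2$ and $D_8$, which the paper supplies as figures), and then reduce every remaining $2$-group to a non-cyclic subgroup of order $16$ and enumerate the thirteen non-cyclic groups of that order, killing each with a forbidden $K_n$ or $K_{m,n}$. Your shortcuts (counting subgroups of order $3$ via Theorem~\ref{pre}(iii) instead of listing the four non-cyclic groups of order $p^3$; the edge bound $E\le 3V-3$ for projective-planar graphs in place of locating $K_{4,4}$ inside $K_{2,2,2,2}$; jumping straight to order $16$ rather than descending one index at a time) are all sound and only cosmetically different.

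One step, as you state it, would fail. Your uniform recipe for the order-$16$ groups --- the $K(t,2)$ on the order-$4$ elements joined to the involutions lying in no cyclic subgroup of order $4$ --- does not produce any of $K_{4,5}$, $K_{4,6}$, $K_{4,8}$, $K_{6,6}$ for the two groups of order $16$ that have elements of order $8$ and are not already on your excluded list, namely $\mathbb{Z}_8\times\mathbb{Z}_2$ and the modular group $\mathbb{Z}_8\rtimes_5\mathbb{Z}_2$. In both, there are only two cyclic subgroups of order $4$ and only two involutions outside them, so your construction yields at most an octahedron on six vertices, which is planar. The repair is exactly what the paper does there: use the eight elements of order $8$ (lying in two cyclic subgroups of order $8$) as one side of the bipartition and the four elements of order $2$ or $4$ outside those subgroups as the other, giving $K_{4,8}$ (the paper records $K_{7,4}$ and $K_{6,4}$ respectively). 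With that patch your argument is complete. Incidentally, your conclusion agrees with the paper's proof and with Theorem~\ref{planar}(2) in placing $\mathbb{Z}_4\times\mathbb{Z}_2$, not $\mathbb{Z}_8\times\mathbb{Z}_2$, in the toroidal list; the occurrence of $\mathbb{Z}_8\times\mathbb{Z}_2$ in the statement of the proposition is a typographical slip.
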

\begin{proof}
	We divide the proof into several cases.\\
	\noindent\textbf{Case 1.} Let $\alpha =2$. Then  $ G \cong\mathbb Z_{p}\times \mathbb Z_{p}$.
	If $p\geq 5$, then by~(\ref{zpp}), $\overline{\mathscr{P}^*(G)}$ contains two copies of $K_{3,3}$, and so $\gamma ({\overline{\mathscr{P}^*(G)}})>1$ and $\overline{\gamma} {(\overline{\mathscr{P}^*(G)})}>1$. If $p=3$, then by~(\ref{zpp}), $\overline{\mathscr{P}^*(G)}\cong K_{2,2,2,2}$. Here $\overline{\mathscr{P}^*(G)}$ contains $K_{4,4}$ as a subgraph, so $\overline{\gamma} {(\overline{\mathscr{P}^*(G)})}>1$; but $\gamma ({\overline{\mathscr{P}^*(G)}})=1$. A toroidal embedding of $\overline{\mathscr{P}^*(G)}$ is shown in Figure~\ref{Z3Z3}.
	If $p=2$, then
	\begin{align}\label{z22}
	\overline{\mathscr{P}^*(G)} \cong C_3,
	\end{align} which is planar.
	\begin{figure}[ht]
		\begin{center}
			\includegraphics[scale=1]{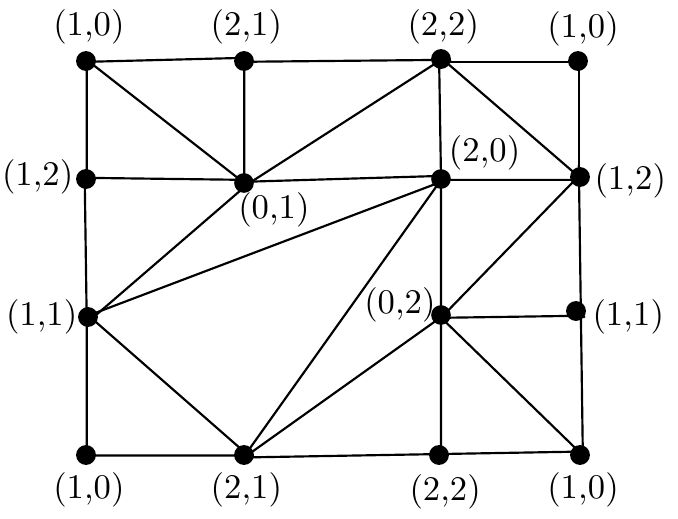}\caption{A toroidal embedding of $\overline{\mathscr{P}^*(\mathbb Z_{3}\times \mathbb Z_3)}.$} \label{Z3Z3}.
		\end{center}
	\end{figure}
	
	\noindent\textbf{Case 2.} Let $\alpha =3$.
	
	\noindent\textbf{Subcase 2a.} Assume that $p\geq 3$. Then up to isomorphism the only non-cyclic groups of order $p^3$ are $\mathbb Z_p \times \mathbb Z_p \times \mathbb Z_p$, $\mathbb Z_{p^2} \times \mathbb Z_p$, $(\mathbb Z_p\times \mathbb Z_p)\rtimes \mathbb Z_p$ and $M_{{p}^3}$.
	
	\noindent\textbf{Subcase 2a(i).} If $G\cong \mathbb Z_p \times \mathbb Z_p \times \mathbb Z_p$, then $G$ contains $p^2+1$ subgroups of order $p$. This implies that $K_{p^2+1}$ is subgraph of $\overline{\mathscr{P}^*(G)}$. Therefore, $\gamma ({\overline{\mathscr{P}^*(G)}})>1$ and $\overline{\gamma} {(\overline{\mathscr{P}^*(G)})}>1$.
	
	\noindent\textbf{Subcase 2a(ii).} If $\overline{\mathscr{P}^*(G)}\cong \mathbb Z_{{p}^2}\times \mathbb Z_{p}$, then $G$ contains $p+1$ subgroups of order $p$, let them be $H_i$, $i=1,2,\ldots,p+1$. Also $G$ contains $p$ cyclic subgroups of order $p^2$, let them be $N_i$, $i=1,2,\ldots,p$. Moreover, all these subgroups contains the unique subgroups of order $p$, without loss of generality, let it be $H_1$. Then $\overline{\mathscr{P}^*(G)}$ contains $K_{p(p-1),p(p^2-1)}$ as a subgraph with the bipartition $X$ and $Y$, where $X$ contains all the elements of order $p$ in $H_i,$ $i=2,3,\ldots,p+1$; $Y$ contains all the non-identity elements in $N_i, i=1,2,\ldots,p$. This implies that $\overline{\mathscr{P}^*(G)}$ contains $K_{4,5}$, and so $\gamma ({\overline{\mathscr{P}^*(G)}})>1$, $\overline{\gamma} {(\overline{\mathscr{P}^*(G)})}>1$.
	
	\noindent\textbf{Subcase 2a(iii).}	If $G\cong (\mathbb Z_{p} \times \mathbb Z_p )\rtimes \mathbb Z_p$, then $G$ contains $p^2$ subgroups of order $p$. Then the subgraph of $\overline{\mathscr{P}^*(G)}$ induced by the set having   one element of order $p$ from each of these subgroups forms $K_{p^2}$, so $\gamma ({\overline{\mathscr{P}^*(G)}})>1$ and $\overline{\gamma} {(\overline{\mathscr{P}^*(G)})}>1$.
	
	\noindent\textbf{Subcase 2a(iv).} If $G\cong M_{p^3}$, then the subgroup lattice of $M_{p^3}$ is isomorphic to the subgroup lattice of $\mathbb Z_{p^2}\times \mathbb Z_p$, so by the above argument, we have $\gamma ({\overline{\mathscr{P}^*(G)}})>1$ and $\overline{\gamma} {(\overline{\mathscr{P}^*(G)})}>1$.
	
	\noindent\textbf{Subcase 2b.} If $p=2$, then upto isomorphism the only non-cyclic subgroup of order 8 are $\mathbb Z_2\times \mathbb Z_2\times \mathbb Z_2$, $\mathbb Z_{4}\times \mathbb Z_2$, $D_8$ and $Q_8$.
	
	\noindent\textbf{Subcase 2b(i).} If $G\cong \mathbb Z_2\times \mathbb Z_2\times \mathbb Z_2$, then the order of each element of $G$ is $2$. It follows that $\overline{\mathscr{P}^*(G)}\cong K_7$, so $\overline{\gamma} {(\overline{\mathscr{P}^*(G)})}>1$ and $\gamma ({\overline{\mathscr{P}^*(G)}})=1$.
	
	\noindent\textbf{Subcase 2b(ii).} If $G\cong \mathbb Z_{4}\times \mathbb Z_2$, then $G$ contains the elements $(1,0),(3,0),(1,1),(3,1)$ of order $4$, and the elements $(2,0),(0,1),(2,1) $ of order $2$. Here $(2,0)$ is a power of each of $(1,0),(3,0),(1,1),(3,1)$; $(3,0)$ is a power of $(1,0)$; $(3,1)$ is a power of $(1,1)$. Also no two remaining elements of $G$ are power of one another. Hence $\overline{\mathscr{P}^*(G)}$ contains $K_{3,3}$ with bipartition $X:=\left\{ (1,0),(3,0),(2,1)\right\}$ and $Y:=\left\{(1,1),(3,1),(0,1)\right\}$, so $\overline{\mathscr{P}^*(G)}$ is non-planar. Also  $\gamma ({\overline{\mathscr{P}^*(G)}})=1$, $\overline{\gamma} {(\overline{\mathscr{P}^*(G)})}=1$; a toroidal and a projective embedding of $\overline{\mathscr{P}^*(G)}$  is shown in Figure~\ref{Z4Z2} and~\ref{Z4.Z2}, respectively.
	\begin{figure}[ht]
		\begin{center}
			\includegraphics[scale=1]{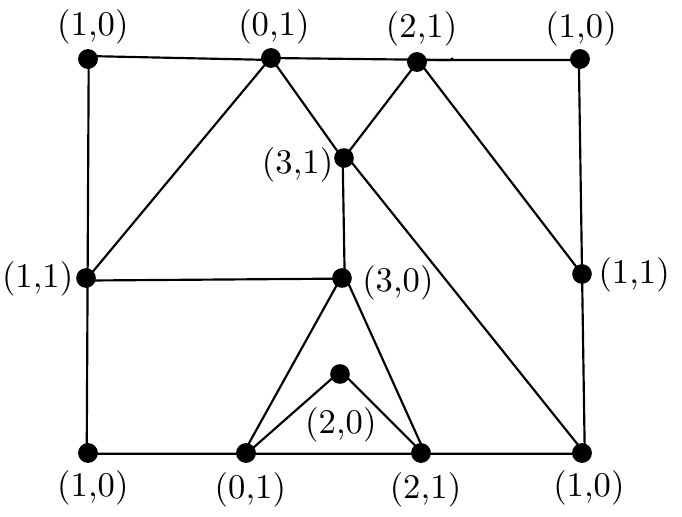}\caption{A toroidal embedding of $\overline{\mathscr{P}^*(\mathbb Z_{4}\times \mathbb Z_2)}.$} \label{Z4Z2}.
		\end{center}
	\end{figure}
	\begin{figure}[ht]
		\begin{center}
			\includegraphics[scale=1]{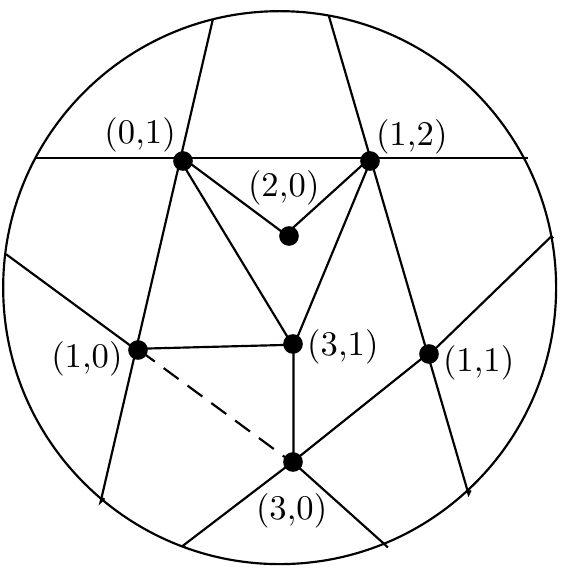}\caption{A projective embedding of $\overline{\mathscr{P}^*(\mathbb Z_{4}\times \mathbb Z_2)}.$} \label{Z4.Z2}
		\end{center}
	\end{figure}

	\noindent\textbf{Subcase 2b(iii).}	 If $G\cong D_8$, then $b$, $ab$, $a^2b$, $a^3b$, $a^2$ are the elements of order 2, and $a$, $a^3$ are the elements of order 4; these are the only elements of $D_8$. Here $\left\langle a\right\rangle =\left\langle a^3\right\rangle $ and it contains $a^2$, so $a$, $a^2$, $a^3$ are not adjacent to each other. Also any two remaining elements of $G$ are not a power of one another. Thus $a^2$, $b$, $ab$, $a^2b$, $a^3b$ forms $K_5$ as a subgraph of $\overline{\mathscr{P}^*(G)}$, so $\overline{\mathscr{P}^*(G)}$ non-planar. Further, $\gamma ({\overline{\mathscr{P}^*(G)}})=1$, $\overline{\gamma} {(\overline{\mathscr{P}^*(G)})}=1$; a toroidal and projective embedding of $\overline{\mathscr{P}^*(G)}$ is shown in Figure~\ref{M8} and \ref{M8P}.
	\begin{figure}[ht]
		\begin{center}
			\includegraphics[scale=1]{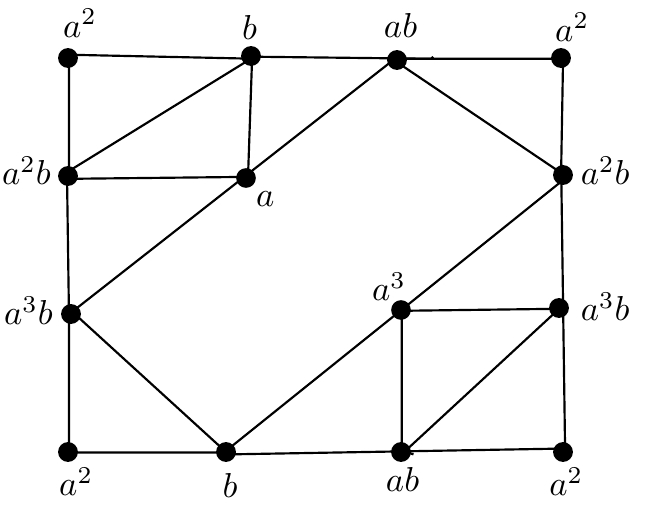}\caption{A toroidal embedding of $\overline{\mathscr{P}^*(D_8)}.$} \label{M8}
		\end{center}
	\end{figure}
	\begin{figure}[ht]
		\begin{center}
			\includegraphics[scale=1]{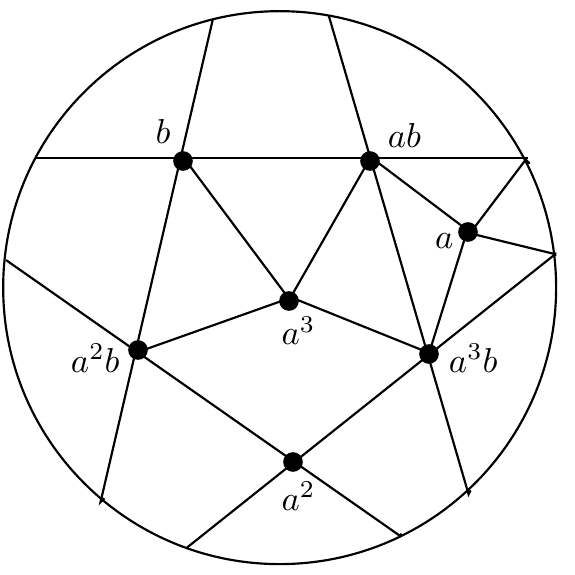}\caption{A projective embedding of $\overline{\mathscr{P}^*(D_8)}.$} \label{M8P}
		\end{center}
	\end{figure}
	
	\noindent\textbf{Subcase 2b(iv).} If $G\cong Q_8$, then by~ Figure~\ref{Q8}, $\overline{\mathscr{P}^*(G)}$  is planar.\\
	\noindent\textbf{Case 3.} Let $\alpha \geq 4$.\\
	\noindent\textbf{Subcase 3a.} Let $p=2$. If $\alpha=4$, then up to isomorphism there are four non-cyclic abelian groups of order $2^4$, and nine non-abelian groups of order $2^4$. In the following, first we deal with these non-cyclic abelian groups:
	
	If $G\cong \mathbb Z_4\times \mathbb Z_4$, then
	$G$ contains six cyclic subgroups, say $H_i$, $i= 1,2, \ldots , 6$ of order 4.  Hence $\overline{\mathscr{P}^*(G)}$ contains $K_{6,6}$ as a subgraph with the bipartition $X$, $Y$, where $X$ contains all the elements of order four in $H_1$, $H_2$ and $H_3$; $Y$ contains all the elements of order four in $H_4$, $H_5$ and $H_6$. Thus  $\overline{\gamma} {(\overline{\mathscr{P}^*(G)})}>1$ and $\gamma ({\overline{\mathscr{P}^*(G)}})>1$.
	
	If $G\cong \mathbb Z_8\times \mathbb Z_2$, then $G$ contains two cyclic subgroups of order 8, let them be $H_1$, $H_2$; two cyclic subgroups of order 4, let them be $N_1,N_2$ ; three elements of order 2, let them be $x_i$, $i=1,2,3$. Here $H_1$, $H_2$ contains a cyclic subgroup of order 4, and an element of order 2 in common, without loss of generality, let them be $N_1$ and $x_1$ respectively. So the elements of order 4 in $N_2$ is not a power of any non-trivial elements in $H_1$, $H_2$. Also $x_2$, $x_3$ are not elements of $H_1$, $H_2$, so they are not a power of any non-trivial elements in $H_1$, $H_2$ and vice versa. Hence $\overline{\mathscr{P}^*(G)}$ contains $K_{7,4}$ as a subgraph, and so $\gamma ({\overline{\mathscr{P}^*(G)}})>1$ and $\overline{\gamma} {(\overline{\mathscr{P}^*(G)})}>1$.
	
	If $G\cong \mathbb Z_4\times \mathbb Z_2 \times \mathbb Z_2$, then $G$ contains a subgroup isomorphic to $\mathbb Z_2\times \mathbb Z_2 \times \mathbb Z_2$. So $G$ contains seven elements of order 2; $(2,0,0)$ is one among these elements, and is a power of each of the elements $(1,0,0),(1,1,0),(1,0,1),(1,1,1)$ of order 4. But the remaining six elements of order 2 are not power of these four elements of order 4. Hence $\overline{\mathscr{P}^*(G)}$ contains $K_{5,6}$ as a subgraph with bipartition $X$ and $Y$, where $X=\left\lbrace (1,0,0),(1,1,0),(1,0,1),(1,1,1),(2,0,0) \right\rbrace $ and $Y$ contains the remaining six elements of order 2. Thus $\gamma ({\overline{\mathscr{P}^*(G)}})>1$ and $\overline{\gamma} {(\overline{\mathscr{P}^*(G)})}>1$.
	
	If $G\cong \mathbb Z_2\times \mathbb Z_2\times \mathbb Z_2 \times \mathbb Z_2$, then all the non-trivial elements of $G$ are of order 2 and so $\overline{\mathscr{P}^*(G)}\cong K_{11}$. Therefore, $\gamma ({\overline{\mathscr{P}^*(G)}})>1$ and $\overline{\gamma} {(\overline{\mathscr{P}^*(G)})}>1$.
	
	Next, we investigate the nine non-abelian groups of order $2^4$.
	
	If $G\cong (\mathbb Z_4\times \mathbb Z_2)\rtimes \mathbb Z_2$, then $G$ contains four cyclic subgroups of order 4, let them be $H_1$, $H_2$, $H_3$, $H_4$. Among these $H_1$, $H_2$ contains a unique element of order 2 in common, and $H_3$, $H_4$ contains a unique elements of order 2 in common. But $G$ contains exactly seven elements of order 2. So the remaining five elements in $G$ of order 2 are not a power of any non-trivial elements in $H_i$, $i=1,2,3,4$. Hence $\overline{\mathscr{P}^*(G)}$ contains $K_{5,8}$ as a subgraph, and so $\gamma ({\overline{\mathscr{P}^*(G)}})>1$ and $\overline{\gamma} {(\overline{\mathscr{P}^*(G)})}>1$.
	
	If $G\cong \mathbb Z_4\rtimes \mathbb Z_4$, then $G$ contains six cyclic subgroups of order 4. Let them be $H_i,$ $i=1,2,\ldots,6$. It follows that $\overline{\mathscr{P}^*(G)}$ contains $K_{6,4}$ with bipartition $X$ and $Y$, where $X$ contains all the elements of order four in $H_1$, $H_2$, $H_3$;  $Y$ contains all the elements of order four in $H_4$, $H_5$. Therefore $\gamma ({\overline{\mathscr{P}^*(G)}})>1$ and $\overline{\gamma} {(\overline{\mathscr{P}^*(G)})}>1$.
	
	If $G\cong \mathbb Z_8\rtimes_5 \mathbb Z_2$, then $G$ contains two cyclic subgroups of order 8, let them be $H_1$, $H_2$, these two cyclic subgroups contains a unique element of order 2 in common. But $G$ contains three elements of order 2,  so the remaining two elements of order 2 are not a power of non-trivial elements of $H_1$, $H_2$. It follows that $\overline{\mathscr{P}^*(G)}$ contains $K_{6,4}$ as a subgraph and so  $\gamma ({\overline{\mathscr{P}^*(G)}})>1$ and $\overline{\gamma} {(\overline{\mathscr{P}^*(G)})}>1$.
	
	If $G\cong D_{16}$, then $G$ contains nine elements of order 2, so $G$ contains $K_9$ as a subgraph. Therefore, $\gamma ({\overline{\mathscr{P}^*(G)}})>1$ and $\overline{\gamma} {(\overline{\mathscr{P}^*(G)})}>1$.
	
	If $G\cong \mathbb Z_8\rtimes_3 \mathbb Z_2$, then $G$ contains three subgroups of order 4, let them be $H_1$, $H_2$, $H_3$. These subgroups contains a unique element of order 2 in common. But $G$ contains five elements of order 2. So the remaining four elements of order 2 are not a power of any non-trivial elements in $H_i$, $i=1,2,3$. It follows that $G$ contains $K_{6,4}$ as a subgraph and so $\gamma ({\overline{\mathscr{P}^*(G)}})>1$ and $\overline{\gamma} {(\overline{\mathscr{P}^*(G)})}>1$.
	
	If $G\cong Q_{16}$, then $G$ contains five cyclic subgroup of order 4, let them be $H_i$, $i=1,\ldots,5$. Then $\overline{\mathscr{P}^*(G)}$ contains $K_{6,4}$ as a subgraph with bipartition $X$ and $Y$, where $X$ contains elements of order 4 in $H_1$, $H_2$, $H_3$, and $Y$ contains elements of order 4 in $H_4$, $H_5$. So $\gamma ({\overline{\mathscr{P}^*(G)}})>1$ and $\overline{\gamma} {(\overline{\mathscr{P}^*(G)})}>1$.
	
	If $G\cong D_8\times \mathbb Z_2$, then $G$ contains eleven elements of order 2. Hence they forms $K_{11}$ as a subgraph of $\overline{\mathscr{P}^*(G)}$ . It follows that $\gamma ({\overline{\mathscr{P}^*(G)}})>1$ and $\overline{\gamma} {(\overline{\mathscr{P}^*(G)})}>1$.
	
	If $G\cong Q_8\times \mathbb Z_2$, then $G$ contains six cyclic subgroups of order 4, let them be $H_i$, $i=1,\ldots,6$. Hence $\overline{\mathscr{P}^*(G)}$ contains $K_{6,6}$ as a subgraph with bipartition $X$ and $Y$, where $X$ contains elements of order 4 in $H_1$, $H_2$ and $H_3$, $Y$ contains elements of order four in $H_4$, $H_5$ and $H_6$. Thus, $\gamma ({\overline{\mathscr{P}^*(G)}})>1$ and $\overline{\gamma} {(\overline{\mathscr{P}^*(G)})}>1$.
	
	If $G\cong Q_8\rtimes \mathbb Z_2$, then $G$ contains seven elements of order 2 and four cyclic subgroups of order 4. Each of these cyclic subgroups contains exactly one element of order 2 in common, let it be $x$. Then $\overline{\mathscr{P}^*(G)}$ contains $K_{3,8}$ as a subgraph with bipartition $X$ and $Y$, where $X$ contains all the elements of order 2 in $G$ except $x$, and $Y$ contains all the elements of order 4 in $G$. So $\gamma ({\overline{\mathscr{P}^*(G)}})>1$ and $\overline{\gamma} {(\overline{\mathscr{P}^*(G)})}>1$.
	
	Assume that $\alpha\geq 5$. Then $G$ must contain a non-cyclic subgroup of order $2^{\alpha-1}$. For suppose all the subgroup of order $2^{\alpha-1}$ are cyclic, let  $H$, $K$ be two subgroups among these. Since $H$ is a subgroup of prime index, so $H$ is normal in $G$. It follows that $HK$  is a subgroup of $G$. If $|H\cap K|< 2^{\alpha-2}$, then $|HK|> |G|$, which is not possible. So $|H\cap K|$ must be $2^{\alpha-2}$. It follows that  $H$ and $K$  contains a common subgroup of order  $2^{\alpha-2}$. Hence $G$ has a unique subgroup of order $2^{\alpha-2}$. Then by Theorem~\ref{pre}(ii), $G$ must be cyclic, which is a contradiction to our hypothesis. Let this subgroup of $G$ of order $2^{\alpha-1}$ be $H$. Then by previous argument, $\gamma ({\overline{\mathscr{P}^*(H)}})>1$ and $\overline{\gamma} {(\overline{\mathscr{P}^*(H)})}>1$, and so $\gamma ({\overline{\mathscr{P}^*(G)}})>1$ and $\overline{\gamma} {(\overline{\mathscr{P}^*(G)})}>1$.\\
	\noindent\textbf{Subcase 3b.}
	Let $p\geq 3$. Then by Theorem~\ref{pre}(i), $G$ contains a non-cyclic subgroup $H$ of order $p^{\alpha-1}$. So by Case 2, $\gamma ({\overline{\mathscr{P}^*(H)}})>1$ and $\overline{\gamma} {(\overline{\mathscr{P}^*(H)})}>1$, and so $\gamma ({\overline{\mathscr{P}^*(G)}})>1$ and $\overline{\gamma} {(\overline{\mathscr{P}^*(G)})}>1$.
	
	Proof follows by combining all the cases together.
\end{proof}

Next, we consider the groups whose order has exactly two distinct prime factors.
First, we prove the following lemma, which is used later in the proof of Proposition \ref{p^mq^n}.
We consider the graph $K^{u,u'}_{3,3,3}$ obtained from the complete tripartite graph $K_{3,3,3}$ by adding two new vertices $u$ and $u'$ and six edges $uv_i$,  $u'v_i$, $i=1,2,3$, where each of the vertices $v_1,v_2,v_3$ comes from a different part of $K_{3,3,3}$, i.e. vertices $v_1,v_2,v_3$ induce a triangle in $K_{3,3,3}$.
Notice that the tripartite graph $K_{3,3,3}$ can also be considered as the complement of three vertex-disjoint triangles, which is denoted by $\overline{3K_3}$ in \cite{GKN2003}. We need to determine whether $K^{u,u'}_{3,3,3}$ is toroidal or not.

\begin{lemma}\label{K333}
	Graph $K^{u,u'}_{3,3,3}$ is non-toroidal.
\end{lemma}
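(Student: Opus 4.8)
The plan is to suppose, for contradiction, that $K^{u,u'}_{3,3,3}$ is toroidal, to observe that the embedding must then be a \emph{triangulation} of the torus, and to derive a contradiction by tracing the rotation around the vertex $v_1$, which will be forced to close up after only four edges.

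First I would record the parameters of $G:=K^{u,u'}_{3,3,3}$: it has $V=11$ vertices, $E=27+6=33$ edges, girth $3$, and it is non-planar, since it contains $K_{3,3}$ (take any two of the three parts of $K_{3,3,3}$). In particular $\gamma(G)\ge 1$. It is well known that a minimum-genus embedding of a connected graph is cellular, so if $\gamma(G)=1$ then $G$ has a $2$-cell embedding on the torus, and Euler's formula gives $V-E+F=0$, hence $F=22$. Since $G$ is $2$-connected and simple, every facial walk of a cellular embedding is a cycle, so it has length at least $3$; therefore $66=2E=\sum_{\text{faces}}(\text{length})\ge 3F=66$. Equality forces every face to be a triangle, i.e. the embedding is a triangulation of the torus.

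Next I would read off the faces incident to the degree-$3$ vertices $u$ and $u'$ (note that $u$ and $u'$ are non-adjacent, so each has degree exactly $3$). In a triangulation, the face occupying the corner at $u$ between two consecutive edges $uv_i,uv_j$ is a triangle having $uv_i$ and $uv_j$ as its two sides at $u$; hence its third side joins $v_i$ and $v_j$, and since $v_i,v_j$ lie in distinct parts of $K_{3,3,3}$ this third side is the edge $v_iv_j$, so the face is the triangle $uv_iv_j$. Thus the three faces at $u$ are exactly $uv_1v_2,\ uv_2v_3,\ uv_3v_1$, and likewise the three faces at $u'$ are $u'v_1v_2,\ u'v_2v_3,\ u'v_3v_1$. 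Since each edge of a triangulation lies on precisely two faces, it follows that the two faces on $v_1v_2$ are $uv_1v_2$ and $u'v_1v_2$; on $v_1v_3$ they are $uv_1v_3$ and $u'v_1v_3$; on $uv_1$ they are $uv_1v_2$ and $uv_1v_3$; and on $u'v_1$ they are $u'v_1v_2$ and $u'v_1v_3$.

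Finally I would walk around $v_1$ following its rotation. Starting from the edge $v_1u$ with $uv_1v_2$ on one side, crossing $v_1v_2$ enters $u'v_1v_2$, then crossing $v_1u'$ enters $u'v_1v_3$, then crossing $v_1v_3$ enters $uv_1v_3$, and crossing $v_1u$ returns to $uv_1v_2$. So the rotation at $v_1$ closes after the four edges $v_1u,\ v_1v_2,\ v_1u',\ v_1v_3$, which forces $\deg(v_1)=4$; but $v_1$ is adjacent in $G$ to all six vertices in the two parts of $K_{3,3,3}$ not containing it, and to $u$ and $u'$, so $\deg(v_1)=8$ — a contradiction. Hence $\gamma(K^{u,u'}_{3,3,3})\neq 1$, i.e. $K^{u,u'}_{3,3,3}$ is non-toroidal. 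The only delicate point is the reduction to a cellular (hence triangulating) embedding; after that it is the short local computation above. (Alternatively one could argue topologically: the six triangles $uv_iv_j$ and $u'v_iv_j$ would bound an embedded $2$-sphere sitting inside the torus, which is impossible; but the rotation-system argument is more self-contained.)
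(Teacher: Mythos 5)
Your proof is correct, and it takes a genuinely different route from the paper's. The paper first establishes that $K_{3,3,3}$ itself has a \emph{unique} embedding on the torus (a triangulation), relying on a computational exhaustive search cited from the literature together with the classification of the toroidal embeddings of $K_{3,3}$, and then observes that no two triangular faces of that embedding share the same three boundary vertices, so $u$ and $u'$ cannot both be inserted. You instead apply Euler's formula directly to the $11$-vertex, $33$-edge graph $K^{u,u'}_{3,3,3}$, force any toroidal $2$-cell embedding of it to be a triangulation, and derive a purely local contradiction: the faces at the degree-$3$ vertices $u,u'$ are pinned down to $uv_iv_j$ and $u'v_iv_j$, and the resulting face rotation at $v_1$ closes after four edges although $\deg(v_1)=8$. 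This is more self-contained (no appeal to computer enumeration or to uniqueness of an embedding) and is a valid argument. One small repair: your justification ``$G$ is $2$-connected, hence every facial walk is a cycle'' is not a correct general principle on the torus (face boundaries of $2$-connected, even $3$-connected, graphs need not be cycles in higher-genus embeddings); what you actually need is only that every facial walk of a simple connected graph with at least two edges has length at least $3$, and that in the equality case a closed facial walk of length exactly $3$ in a simple graph must be a triangle with three distinct vertices --- both of which are immediate since loops and parallel edges are excluded. With that phrasing adjusted, the argument is complete.
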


\begin{proof} We use an approach similar to the proofs of Lemma 4.4 and Proposition 4.6 in \cite{GK2002}.
	First, we show that the graph $K_{3,3,3}=\overline{3K_3}$ has a unique embedding on the torus, which is a triangulation of the torus.
	This is shown computationally by using an exhaustive search method in \cite{GKN2003} (see also some corrected computational results for \cite{GKN2003} at \url{http://www.combinatorialmath.ca/G\&G/TorusMaps.html}).
	Then, we are going to show that it is not possible to extend this unique embedding of $K_{3,3,3}$ to an embedding of $K^{u,u'}_{3,3,3}$ on the torus.
	
	From the Euler's formula for the torus, we have $n-m+f=0$, where $n, m$, and $f$ are, respectively, the numbers of vertices, edges, and faces of a 2-cell embedding on the torus (e.g., see \cite{GKN2003}). Since $K_{3,3,3}$ has $n=9$ vertices and $m=27$ edges, an embedding of $K_{3,3,3}$ on the torus must have $18$ faces, i.e. $f=18$. Since each edge of an embedding appears either exactly once on the boundaries of two separate faces or two times on the boundary of the same face, and each face of an embedding is bounded by at least three edges, we have $3f\le 2m$. The equality $3f=2m$ is possible only when each face is a triangle, which is the case of $K_{3,3,3}$ on the torus. Therefore, an embedding of $K_{3,3,3}$ on the torus must be a triangulation.
	
	On the other hand, there are exactly two different embeddings of $K_{3,3}$ on the torus (e.g., see Figure 8 in \cite{GKN2003}), which are to be extended to an embedding of $K_{3,3,3}$ by adding into the 2-cell faces three new vertices adjacent to all six vertices of the original $K_{3,3}$. One of these embeddings of $K_{3,3}$ contains faces with only four vertices on their boundary: clearly, it is not possible to triangulate such a 4-vertex face by adding a new vertex adjacent to all six vertices of $K_{3,3}$. However, the other embedding of $K_{3,3}$ on the torus has each of its three faces containing all six vertices of $K_{3,3}$ on the face boundary. Adding a new vertex into each 6-vertex face and making it adjacent to all the vertices on the face boundary provides a triangulation of the torus by $K_{3,3,3}$. By this construction and symmetries of the embedding of $K_{3,3}$ with the hexagonal faces, the embedding of $K_{3,3,3}$ on the torus is unique.
	
	\begin{figure}[h!]
		\begin{center}
			\includegraphics[width=6cm]{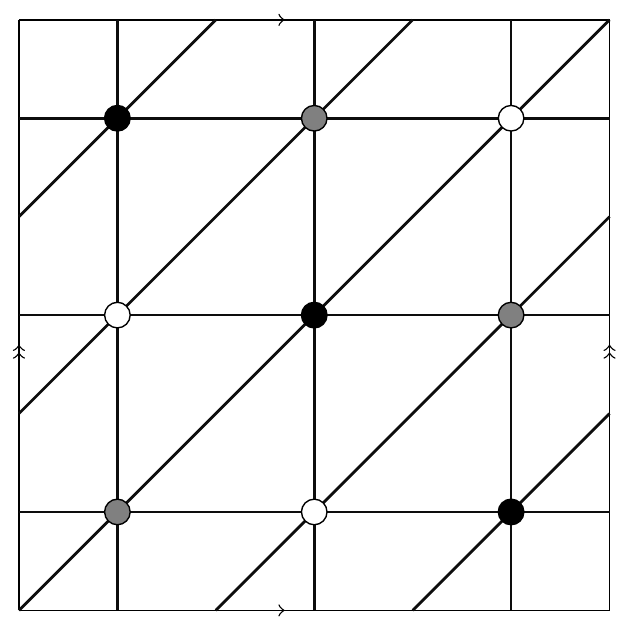}
			\caption{The unique embedding of $K_{3,3,3}$ on the torus.}
			\label{fig:K333}
		\end{center}
	\end{figure}
	
	Now, each face of the toroidal embedding of $K_{3,3,3}$ contains exactly three vertices, one from each part of $K_{3,3,3}$, no two faces have the same three vertices on the face boundary, and there are no vertices repeated on any face boundary (see Figure~\ref{fig:K333}).
	Each of the vertices $u$ or $u'$ of $K^{u,u'}_{3,3,3}$ separately can be added with its three incident edges into any of the $18$ faces of this embedding of $K_{3,3,3}$ without edge crossings.
	However, $u$ and $u'$ must be adjacent to the same three vertices on the boundary of a triangular face. There are no two different faces with the same three vertices on the boundary, and there are no faces with vertices repeated on the boundary.
	Therefore, after adding one of $u$ or $u'$ into a face of the embedding of $K_{3,3,3}$ and connecting it with edges to all three vertices on the face boundary, there is no face containing the same three vertices to add the other vertex without edge crossings.
	Thus, it is not possible to extend this unique embedding of $K_{3,3,3}$ to an embedding of $K^{u,u'}_{3,3,3}$ on the torus, and $K^{u,u'}_{3,3,3}$ is non-toroidal.
\end{proof}

\begin{pro}\label{p^mq^n}
	If $G$ is a non-cyclic group of order $p^nq^m$, where $p,~q$ are distinct primes and $n,m\geq 1$. Then
	\begin{enumerate}[{\normalfont (1)}]
		\item $\overline{\mathscr{P}^*(G)}$ is planar if and only if $G\cong S_3$;
		\item $\gamma {(\overline{\mathscr{P}^*(G)})}>1$ and  $\overline{\gamma} {(\overline{\mathscr{P}^*(G)})}>1$, if $G\ncong S_3$.	
	\end{enumerate}
\end{pro}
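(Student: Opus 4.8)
The plan is to deduce the ``only if'' part of (1) together with (2) from a single statement: for every non-cyclic $G$ of order $p^nq^m$ with $G\ncong S_3$, the graph $\overline{\mathscr{P}^*(G)}$ contains a subgraph $H$ with $\gamma(H)>1$ and $\overline{\gamma}(H)>1$; since genus is monotone under subgraphs this gives the conclusion. The ``if'' part of (1) is immediate: $\overline{\mathscr{P}^*(S_3)}$ (Figure~\ref{S3}) is $K_5$ with one edge deleted, which is planar. Throughout I would use two elementary facts. First, for any subgroup $K\le G$, the graph $\overline{\mathscr{P}^*(K)}$ is precisely the subgraph of $\overline{\mathscr{P}^*(G)}$ induced by $K\setminus\{e\}$, because ``$u\in\langle v\rangle$'' is intrinsic to $K$; hence $\gamma(\overline{\mathscr{P}^*(K)})\le\gamma(\overline{\mathscr{P}^*(G)})$ and likewise for $\overline{\gamma}$. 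Second, an element whose order is coprime to $|K|$ is adjacent in $\overline{\mathscr{P}^*(G)}$ to every non-trivial element of $K$; in particular any element of $p$-power order is adjacent to any element of $q$-power order. By the consequences recorded in Theorem~\ref{gamma(K_n)}, each of $K_8$, $K_{4,5}$, $K_{4,6}$ and $K_{3,7}$ has both $\gamma>1$ and $\overline{\gamma}>1$; in most cases we produce one of these, and in a few others a subgraph which, by Euler's formula, has too many edges to embed on the torus or the projective plane (more than $3n$ and more than $3n-3$ edges, respectively, on $n$ vertices).

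The first step is a Sylow reduction. If some Sylow subgroup $P$ of $G$ is non-cyclic and not isomorphic to one of $\mathbb Z_2^2$, $Q_8$, $\mathbb Z_3^2$, $\mathbb Z_2^3$, $\mathbb Z_4\times\mathbb Z_2$, $D_8$, then by Propositions~\ref{cyclic} and~\ref{p^m} the graph $\overline{\mathscr{P}^*(P)}$ is neither planar, toroidal, nor projective, so $\gamma(\overline{\mathscr{P}^*(P)})>1$ and $\overline{\gamma}(\overline{\mathscr{P}^*(P)})>1$, and we are done by the first fact above. So assume henceforth that every non-cyclic Sylow subgroup of $G$ lies in this list. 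I would then handle, one shape at a time, the groups that actually have a non-cyclic Sylow subgroup $P$ from the list, using that since $|G|>|P|$ there is an element $z$ whose order is the other prime and which is adjacent to all of $P\setminus\{e\}$: if $P\cong\mathbb Z_2^3$ then $\overline{\mathscr{P}^*(P)}=K_7$ and $\{z\}\cup(P\setminus\{e\})$ induces $K_8$; if $P\cong\mathbb Z_3^2$ then the eight elements of order $3$ form $K(4,2)$ and $z$ with two of its parts gives $K_{4,5}$; if $P\cong Q_8$ (its central involution set aside) there are at least $2(q^m-1)\ge 4$ elements of order $q^j$ or $2q^j$, all adjacent to the six elements of order $4$, giving $K_{4,6}$; and if $P\cong\mathbb Z_4\times\mathbb Z_2$ or $P\cong D_8$ one argues similarly, also using elements of mixed order to enlarge a complete bipartite subgraph, with the extremal instances (such as $D_8\times\mathbb Z_3$) handled by exhibiting a subgraph on $8$ vertices obtained from $K_8$ by deleting at most two edges, which has $26>24$ edges.

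It then remains to treat the groups whose Sylow subgroups are all cyclic and the groups with Sylow subgroup $\mathbb Z_2^2$. In the first case $G$ is metacyclic, and I would take $G\cong\mathbb Z_{q^m}\rtimes\mathbb Z_{p^n}$ with $\mathbb Z_{q^m}$ a normal Sylow subgroup of odd order and non-trivial action (Burnside's normal $p$-complement theorem when $|G|$ is even): the $n_p\,\varphi(p^n)$ elements of order exactly $p^n$ are each adjacent to every element of order divisible by $q$ in the cyclic subgroup $\mathbb Z_{q^m}\times K_0$, where $K_0$ is the kernel of the action, and a check over the finitely many admissible shapes of $(p,q,m,n)$ shows these two sets have sizes $\ge 4$ and $\ge 5$, or $\ge 3$ and $\ge 7$, in every instance except $G\cong S_3$ --- covering in particular the dihedral groups $D_{2p}$ ($p\ge 5$) and $D_{2\cdot3^m}$ ($m\ge 2$) and the dicyclic groups. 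In the second case, if $q^m\ge 8$ then the three involutions together with the $q$-elements give $K_{3,7}$, while if $q^m\in\{3,5,7\}$ one inspects the groups of order $12$, $20$, $28$ with Sylow subgroup $\mathbb Z_2^2$ --- namely $A_4$, $D_{12}$, $D_{20}$, $D_{28}$, $\mathbb Z_2\times\mathbb Z_{10}$, $\mathbb Z_2\times\mathbb Z_{14}$ and $\mathbb Z_2\times\mathbb Z_6$ --- and finds a large complete bipartite or multipartite subgraph in all but $\mathbb Z_2\times\mathbb Z_6$. For $G\cong\mathbb Z_2\times\mathbb Z_6$ one has $\overline{\mathscr{P}^*(G)}\cong K^{u,u'}_{3,3,3}$: the three order-$2$ elements and six order-$6$ elements form $K_{3,3,3}$, and the two order-$3$ elements are each adjacent exactly to the three order-$2$ elements, which constitute a triangle of $K_{3,3,3}$. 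Hence $\overline{\mathscr{P}^*(\mathbb Z_2\times\mathbb Z_6)}$ is non-toroidal by Lemma~\ref{K333}, and non-projective since it contains $K_{3,6}$ with $\overline{\gamma}(K_{3,6})=2$ by Theorem~\ref{gamma(K_n)}.

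The main work, and the main obstacle, is the breadth and bookkeeping of the case analysis in the last two paragraphs: one must correctly enumerate the non-cyclic groups of order $p^nq^m$ whose Sylow subgroups are all small, and justify each of the element counts --- that the relevant Sylow subgroups meet pairwise trivially in the Frobenius-type cases, that the unique involution of a normal $Q_8$ or of a normal cyclic $2$-Sylow is central, that mixed-order elements are adjacent to the prime-power-order elements being used, and so on. The single most delicate point is recognizing that $\overline{\mathscr{P}^*(\mathbb Z_2\times\mathbb Z_6)}$ is exactly $K^{u,u'}_{3,3,3}$, for this is the only case where no $K_8$, $K_{4,5}$, $K_{3,7}$ or ``over-dense'' subgraph is available, so that Lemma~\ref{K333} is genuinely needed.
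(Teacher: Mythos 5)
Your proposal is correct in outline and reaches the same conclusion, but it is organized quite differently from the paper's proof. The paper first reduces to $|G|=p^nq$ (via the $K_{3,7}$ obstruction when both exponents are at least $2$), and then enumerates groups by the value of $n$ and by abelian versus non-abelian type, exhibiting an explicit $K_r$ or $K_{r,s}$ obstruction for each isomorphism class (e.g.\ $\mathbb Z_3\rtimes\mathbb Z_4$, $A_4$, $D_{12}$, $Q_8\times\mathbb Z_3$, \dots). Your Sylow-first reduction --- pass to a non-cyclic Sylow subgroup unless it lies in the short list $\{\mathbb Z_2^2, Q_8, \mathbb Z_3^2, \mathbb Z_2^3, \mathbb Z_4\times\mathbb Z_2, D_8\}$ coming from Proposition~\ref{p^m}, then treat the remaining groups by the shape of their small Sylow subgroups, with the all-cyclic-Sylow case dispatched through the metacyclic structure $\mathbb Z_{q^m}\rtimes\mathbb Z_{p^n}$ --- collapses many of the paper's individual cases at once and makes the logical skeleton more visible. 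Both routes converge on the same two essential points: the induced-subgraph monotonicity of $\gamma$ and $\overline{\gamma}$, and the single genuinely delicate case $G\cong\mathbb Z_2\times\mathbb Z_6$, where no $K_8$, $K_{4,5}$ or $K_{3,7}$ is available and one must recognize $\overline{\mathscr{P}^*(G)}\cong K^{u,u'}_{3,3,3}$ and invoke Lemma~\ref{K333} (plus $K_{3,6}$ for non-projectivity); your identification of that graph is exactly the paper's.

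Two local claims in your sketch need repair, though neither threatens the architecture. For a Sylow subgroup $P\cong Q_8$, the count ``$2(q^m-1)$ elements of order $q^j$ or $2q^j$'' presumes the central involution of $P$ centralizes a $q$-element; this is true when the Sylow $q$-subgroup is normal and cyclic (the image of $Q_8$ in the abelian group $\mathrm{Aut}(\mathbb Z_{q^m})$ has order at most $2$, so the kernel contains the center), but when it is not normal you should instead count the at least $q^2-1\geq 8$ elements of order $q$ directly. For $D_8\times\mathbb Z_3$, the advertised ``$K_8$ minus at most two edges'' does not materialize (the natural $8$-vertex sets one writes down lose four edges, landing exactly on the toroidal Euler bound $3n=24$); the clean fix is to observe that the ten elements of order $6$ induce $K(5,2)$, which contains $K_{4,6}$, and $\gamma(K_{4,6})>1$, $\overline{\gamma}(K_{4,6})>1$ by Theorem~\ref{gamma(K_n)}.
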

\begin{proof}
	In the proof of Proposition~\ref{cyclic}, we have noticed that, to prove this result, it is enough to consider the finite non-cyclic group $G$ of order
	$p^nq$, $n\geq 1$. \\
	\noindent\textbf{Case 1.} Let $n=1$. Without loss of generality, we assume that $p<q$, then $G\cong \mathbb Z_q\rtimes \mathbb Z_p$.
	If $(p,q)=(2,3)$, then $G\cong S_3$. By Figure~\ref{S3}, $\overline{\mathscr{P}^*(S_3)}$ is planar.
	If $(p,q)\neq (2,3)$, then by~(\ref{zqzp}), $\overline{\mathscr{P}^*(G)}$ contains $K_{5,4}$ as a subgraph, so $\gamma ({\overline{\mathscr{P}^*(G)}})>1$ and $\overline{\gamma} {(\overline{\mathscr{P}^*(G)})}>1$. \\
	\noindent\textbf{Case 2.} Let $n=2$. \\
	\noindent\textbf{Subcase 2a.} If $G$ is abelian, then $G\cong \mathbb Z_p\times \mathbb Z_{pq}$.
	
	First we assume that $p=2$. If $q=3$, the structure of $\overline{\mathscr{P}^*(G)}$ is shown in Figure~\ref{62}. Since this graph is isomorphic to the graph $K^{u,u'}_{3,3,3}$ described in Lemma~\ref{K333}, it follows that $\overline{\mathscr{P}^*(G)}$ is non-toroidal. Further, in Figure~\ref{62}, we notice that $K_{3,6}$ is a subgraph of  $\overline{\mathscr{P}^*(G)}$   and so  $\overline{\gamma} {(\overline{\mathscr{P}^*(G)})}>1$.
	\begin{figure}[ht]
		\begin{center}
			\includegraphics[scale=1]{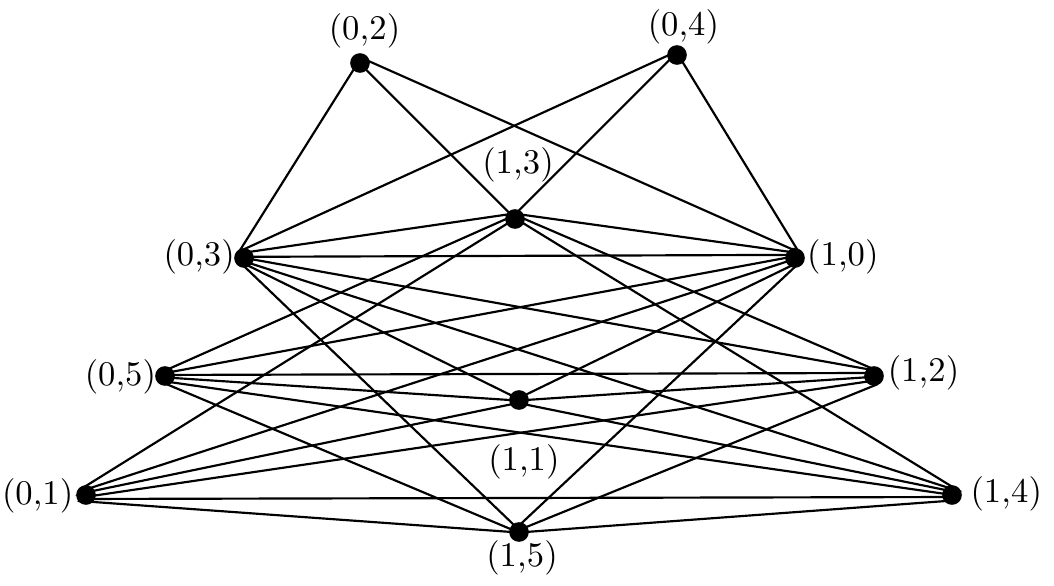}\caption{The structure of $\overline{\mathscr{P}^*(\mathbb Z_2\times \mathbb Z_6)}.$} \label{62}
		\end{center}
	\end{figure}	
	If $q=5$, then $G$ contains three cyclic subgroups of order $10$. This implies that $\overline{\mathscr{P}^*(G)}$ contains $K_{4,8}$, and so $\gamma ({\overline{\mathscr{P}^*(G)}})>1$ and $\overline{\gamma} {(\overline{\mathscr{P}^*(G)})}>1$. If $q\geq 7$, then $G$ contains at least two cyclic subgroups of order $q$. It follows that $\overline{\mathscr{P}^*(G)}$ contains $K_{6,6}$ as a subgraph, and so $\gamma ({\overline{\mathscr{P}^*(G)}})>1$ and $\overline{\gamma} {(\overline{\mathscr{P}^*(G)})}>1$.
	
	Next, we assume that $p=3$. Then $G$ contains $\mathbb Z_3\times \mathbb Z_3$ as a subgraph. So $G$ contains four cyclic subgroups of order $3$, let them be $H_1$, $H_2$, $H_3$, $H_4$. Let for each $i=1,2,3,4$,  $h_i$, $h_i'$ be the elements of $H_i$ of order $3$. Also $G$ contains an element of order $q$, say $x$. Then $\overline{\mathscr{P}^*(G)}$ contains $K_{5,4}$ as a subgraph with bipartition $X:=\left\{h_1,h_2,h_1',h_2',x \right\}$ and $Y:=\left\{h_3,h_4,h_3',h_4'\right\}$.
	
	If $p\geq 5$, then $G$ contains $\mathbb Z_p\times \mathbb Z_p$ as a subgraph, so by~(\ref{zpp}), $\gamma ({\overline{\mathscr{P}^*(G)}})>1$ and $\overline{\gamma} {(\overline{\mathscr{P}^*(G)})}>1$.\\
	\noindent\textbf{Subcase 2b.} Let $G$ be non-abelian.
	
	\noindent\textbf{Subcase 2b(i).} Let $p=2$.
	
	If $q=3$, then $G\cong \mathbb Z_3\rtimes \mathbb Z_4,~D_{12}$ or $A_4$. If $G\cong \mathbb Z_3\rtimes \mathbb Z_4$, then $G$ contains three cyclic subgroups, say $H_1$, $H_2$, $H_3$ of order 4;  unique cyclic subgroup $K$ of order $6$, and  unique element of order 2. It follows that $\overline{\mathscr{P}^*(G)}$ contains $K_{5,4}$ as a subgraph, so $\gamma ({\overline{\mathscr{P}^*(G)}})>1$ and $\overline{\gamma} {(\overline{\mathscr{P}^*(G)})}>1$. If $G\cong D_{12}$, then $G$ contains seven elements of order 2. These elements together with the  element of order 3 forms $K_8$ as a subgraph of $\overline{\mathscr{P}^*(G)}$, so $\gamma ({\overline{\mathscr{P}^*(G)}})>1$ and $\overline{\gamma} {(\overline{\mathscr{P}^*(G)})}>1$. If $G\cong A_4$, then $G$ contains eight elements of order 3, and three elements of order 2. It follows that $\overline{\mathscr{P}^*(G)}$ contains $K_{3,8}$ as a subgraph, so $\gamma ({\overline{\mathscr{P}^*(G)}})>1$ and $\overline{\gamma} {(\overline{\mathscr{P}^*(G)})}>1$.
	
	If $q=5$, then $G\cong \mathbb Z_5\rtimes \mathbb Z_4,~\mathbb Z_5\rtimes (\mathbb Z_2\times \mathbb Z_2)$ or $D_{20}$. If $G\cong \mathbb Z_5\rtimes \mathbb Z_4$, then $G$ contains five elements of order $2$, and four elements of order 5. This implies that $\overline{\mathscr{P}^*(G)}$ contains $K_{5,4}$ as a subgraph, so $\gamma ({\overline{\mathscr{P}^*(G)}})>1$ and $\overline{\gamma} {(\overline{\mathscr{P}^*(G)})}>1$. If $G\cong \mathbb Z_5\rtimes (\mathbb Z_2\times \mathbb Z_2)$, then $G$ contains four cyclic subgroups of order $4$, and a unique subgroup of order 5. Therefore,  $\overline{\mathscr{P}^*(G)}$ contains $K_{4,8}$, so $\gamma ({\overline{\mathscr{P}^*(G)}})>1$ and $\overline{\gamma} {(\overline{\mathscr{P}^*(G)})}>1$. If $G\cong D_{20}$, then $G$ contains eleven elements of order 2, and so $\overline{\mathscr{P}^*(G)}$ contains $K_{11}$ as a subgraph. Hence $\gamma ({\overline{\mathscr{P}^*(G)}})>1$ and $\overline{\gamma} {(\overline{\mathscr{P}^*(G)})}>1$.
	
	If $q=7$, then $G\cong \mathbb Z_7\rtimes \mathbb Z_4$ or $D_{28}$. If $G\cong \mathbb Z_7\rtimes \mathbb Z_4$, then $G$ contains seven cyclic subgroups of order 4, and six elements of order 7. It follows that  $\overline{\mathscr{P}^*(G)}$ contains $K_{5,4}$ as a subgraph. Thus $\gamma ({\overline{\mathscr{P}^*(G)}})>1$ and $\overline{\gamma} {(\overline{\mathscr{P}^*(G)})}>1$. If $G\cong D_{28}$, then $G$ contains fourteen elements of order 2 and so $\overline{\mathscr{P}^*(G)}$ contains $K_{5,4}$ as a subgraph. Therefore, $\gamma ({\overline{\mathscr{P}^*(G)}})>1$ and $\overline{\gamma} {(\overline{\mathscr{P}^*(G)})}>1$.
	
	If $q>7$, then $G$ contains at least two subgroups of order 4 and $q$, let them be $H_1$, $H_2$ respectively. Also each non-trivial element of $H_1$ is adjacent to all the non-trivial elements of $H_2$. It follows that $\overline{\mathscr{P}^*(G)}$ contains $K_{3,8}$ as a subgraph, so $\gamma ({\overline{\mathscr{P}^*(G)}})>1$ and $\overline{\gamma} {(\overline{\mathscr{P}^*(G)})}>1$.
	
	\noindent\textbf{Subcase 2b(ii).} Let $p=3$.
	
	(i) If $q=2$, then $G\cong S_3\times \mathbb Z_3$, $D_{18}$ or $\mathbb Z_3\times \mathbb Z_3\rtimes \mathbb Z_2$.
	
	If $G\cong S_3\times \mathbb Z_3$, then $G$ contains a subgroup of order 9, let it be $H$, and three elements of order 2. These three elements of order 2 are adjacent to all the non-trivial elements of $H$. This implies that $\overline{\mathscr{P}^*(G)}$ contains $K_{3,8}$ as a subgraph, and so $\gamma ({\overline{\mathscr{P}^*(G)}})>1$ and $\overline{\gamma} {(\overline{\mathscr{P}^*(G)})}>1$.
	
	If $G\cong D_{18}$ or $\mathbb Z_3\times \mathbb Z_3\rtimes \mathbb Z_2$, then in either case $\overline{\mathscr{P}^*(G)}$ contains nine elements of order 2. This implies that $\overline{\mathscr{P}^*(G)}$ contains $K_{9}$ as a subgraph, and so $\gamma ({\overline{\mathscr{P}^*(G)}})>1$ and $\overline{\gamma} {(\overline{\mathscr{P}^*(G)})}>1$.
	
(ii)	If $q\neq 2$, then $G$ contains a subgroup of each of order 9 and $q$, let them be $H_1$, $H_2$, respectively. Also every non-trivial element of $H_1$ are adjacent to all the non-trivial elements of $H_2$. It follows that $\overline{\mathscr{P}^*(G)}$ contains $K_{8,4}$ as a subgraph and so $\gamma ({\overline{\mathscr{P}^*(G)}})>1$ and $\overline{\gamma} {(\overline{\mathscr{P}^*(G)})}>1$.
	
	\noindent\textbf{Subcase 2b(iii).} Let $p\geq 5$.
	
	Then $G$ contains a subgroup of order $p^2$, let it be $H$. If $H$ is non-cyclic, then by~(\ref{zpp}), $\gamma ({\overline{\mathscr{P}^*(H)}})>1$ and $\overline{\gamma} {(\overline{\mathscr{P}^*(H))}}>1$, so $\gamma ({\overline{\mathscr{P}^*(G)}})>1$ and $\overline{\gamma} {(\overline{\mathscr{P}^*(G)})}>1$. Suppose $H$ is cyclic, then every element of $H$ of order $p^2$ are not a power of any element  which not in $H$ and vice versa. Thus $\overline{\mathscr{P}^*(G)}$ contains $K_{3,7}$ as a subgraph, and so $\gamma ({\overline{\mathscr{P}^*(G)}})>1$ and $\overline{\gamma} {(\overline{\mathscr{P}^*(G)})}>1$.\\
	\noindent\textbf{Case 3.} Let $n\geq 3$.\\
	\noindent\textbf{Subcase 3a.} If $G$ is abelian, then $G$ contains a subgroup isomorphic to $\mathbb Z_{pq}\times \mathbb Z_p\times \mathbb Z_p$ or $\mathbb Z_{pq}\times \mathbb Z_{p^2}$.
	
	If $G$ contains a subgroup isomorphic to $\mathbb Z_{pq}\times \mathbb Z_p\times \mathbb Z_p$, then $G$ contains $p^2+1$ cyclic subgroups of order $p$, let them be $H_i,$ $i=1,2,\ldots, p^2+1$. Also $G$ contains a cyclic subgroup of order $pq$, let it be $K$. Clearly $K$ contains a unique subgroup of order $p$, so without loss of generality, let it be $H_1$. Then $H_i$ $(i\neq 1)$ are not subgroups of $K$. So each non-trivial element of $K$ is adjacent to all the non-trivial elements in $H_i$ $(i\neq 1)$. It follows that $\overline{\mathscr{P}^*(G)}$ contains $K_{3,7}$ as a subgraph, so $\gamma ({\overline{\mathscr{P}^*(G)}})>1$ and $\overline{\gamma} {(\overline{\mathscr{P}^*(G)})}>1$.
	
	If $G$ contains a subgroup isomorphic to $\mathbb Z_{pq}\times \mathbb Z_{p^2}$, then $G$ contains $p+1$ cyclic subgroups  of order $p$, let them be $H_i$, $i=1,2,\ldots,p+1$, and two cyclic subgroups of order $p^2$, let them be $N_1$, $N_2$. These two subgroups contains a unique subgroup of order $p$ in common, so without loss of generality, let it be $H_1$. Then each non-trivial element of $N_1$, $N_2$ are adjacent to all the non-trivial elements in $H_i$ $(i\neq 1)$. Also $G$ contains a subgroup of order $pq$. It follows that $\overline{\mathscr{P}^*(G)}$ contains $K_{3,7}$ as a subgraph with bipartition $X$ and $Y$, where $X$ contains elements of order $p^2$ in $N_1$ and $N_2$; $Y$ contains elements of order $p$ in $H_i$ $(i\neq 1)$, the elements of order $q$, and $pq$ in $G$. So $\gamma ({\overline{\mathscr{P}^*(G)}})>1$ and $\overline{\gamma} {(\overline{\mathscr{P}^*(G)})}>1$.\\
	\noindent\textbf{Subcase 3b.} Let $G$ be non-abelian.\\
	\noindent\textbf{Subcase 3b(i).} Let $p=2$.
	
	(i) If $q=3$ and $n=3$, then  $G$ contains a subgroup of order 8, let it be $H$. But the only groups of  order 8 are $\mathbb Z_2\times \mathbb Z_2\times \mathbb Z_2$, $\mathbb Z_{4}\times \mathbb Z_2$, $D_8$, $Q_8$ and $\mathbb Z_8$.
	
	If $H\cong \mathbb Z_2\times \mathbb Z_2\times \mathbb Z_2$, then $H$ contains seven elements of order 2. These elements together with the element of order 3 forms $K_8$ as a subgraph of $\overline{\mathscr{P}^*(G)}$, so $\gamma ({\overline{\mathscr{P}^*(G)}})>1$ and $\overline{\gamma} {(\overline{\mathscr{P}^*(G)})}>1$.
	
	If $H\cong \mathbb Z_{4}\times \mathbb Z_2$, then $H$ contains two cyclic subgroups of order 4, let them be $H_1$, $H_2$, and three elements of order 2, let them be $x_1$, $x_2$, $x_3$. Here $H_1$, $H_2$ contains an element of order 2 in common, so without loss of generality, let  it be $x_1$. Then $x_2$, $x_3$ are adjacent to all the non-trivial elements of $H_1$ and $H_2$. Hence $\overline{\mathscr{P}^*(G)}$ contains $K_{5,4}$ as a subgraph with the bipartition $X$, $Y$, where $X$ contains all the non-trivial elements in $H_1$ and $H_2$; $Y$ contains $x_2$, $x_3$, and the elements of order 3 in $G$. Therefore, $\gamma ({\overline{\mathscr{P}^*(G)}})>1$ and $\overline{\gamma} {(\overline{\mathscr{P}^*(G)})}>1$.
	
	If $H\cong D_8$, then $H$ contains five elements of order 2, let them be $x_i$, $i=1,2,\ldots, 5$. Also $H$ contains a cyclic subgroup of order 4, let it be $H_1$, which contains only one element of order 2, let it be $x_1$. So $x_i$, $i\neq 1$ is adjacent to all the non-trivial elements in $H_1$. Hence $\overline{\mathscr{P}^*(G)}$ contains $K_{5,4}$ as a subgraph with the bipartition $X$, $Y$, where $X$ contains all the non-trivial elements in $H_1$, and the elements of order 3 in $G$; $Y$ contains only $x_i,$ $i\neq 1$. Therefore, $\gamma ({\overline{\mathscr{P}^*(G)}})>1$ and $\overline{\gamma} {(\overline{\mathscr{P}^*(G)})}>1$.
	
	If $H\cong Q_8$, then $G\cong Q_8\times \mathbb Z_3$ or $\mathbb Z_3 \rtimes Q_8$. If $G\cong Q_8\times \mathbb Z_3$, then $G$ contains three cyclic subgroups of order 12, let them be $H_1$, $H_2$, $H_3$.  $H$ contains three cyclic subgroup of order 4, let them be $H_1$, $H_2$ and $H_3$. Then $\overline{\mathscr{P}^*(G)}$ contains $K_{5,4}$ as a subgraph with the bipartition $X$, $Y$, where $X$ contains all the elements of order twelve in $H_1$, $H_2$; $Y$ contains all the elements of order twelve in $H_3$. therefore, $\gamma ({\overline{\mathscr{P}^*(G)}})>1$ and $\overline{\gamma} {(\overline{\mathscr{P}^*(G)})}>1$. If $G\cong \mathbb Z_3 \rtimes Q_8$, then $G$ contains seven elements of order 4, let them be $H_i$, $i=1,2,\ldots ,7$. So $\overline{\mathscr{P}^*(G)}$ contains $K_{5,4}$ as a subgraph with the bipartition $X$, $Y$, where $X$ contains all the elements of order four in $H_1$, $H_2$, $H_3$; $Y$ contains all the elements of order four in $H_4$, $H_5$. So, $\gamma ({\overline{\mathscr{P}^*(G)}})>1$ and $\overline{\gamma} {(\overline{\mathscr{P}^*(G)})}>1$. Moreover, these subgroups contains an element of order 2 in common, let it be $x$. Then $\overline{\mathscr{P}^*(G)}$ contains $K_{5,4}$ as a subgraph with the bipartition $X$, $Y$, where $X$ contains all non-trivial elements in $H_1$, $H_2$; $Y$ contains all the elements of order 3 in $G$, and elements of order 4 in $H_3$. So $\gamma ({\overline{\mathscr{P}^*(G)}})>1$ and $\overline{\gamma} {(\overline{\mathscr{P}^*(G)})}>1$.
	
	If $H\cong \mathbb Z_8$, then the elements not in $H$ are adjacent to all the non-trivial elements in $H$ of order $8$. Hence $\overline{\mathscr{P}^*(G)}$ contains $K_{7,3}$ as a subgraph. Therefore, $\gamma ({\overline{\mathscr{P}^*(G)}})>1$ and $\overline{\gamma} {(\overline{\mathscr{P}^*(G)})}>1$.
	
	If $n>3$, then $G$ contains a subgroup $H$ of order $2^n$. If $H$ is cyclic, then the element not in $H$ are adjacent to the elements in $H$ of order $2^{n}$, $n>3$. It follows that $\overline{\mathscr{P}^*(G)}$ contains $K_{7,3}$ as a subgraph. Therefore, $\gamma ({\overline{\mathscr{P}^*(G)}})>1$ and $\overline{\gamma} {(\overline{\mathscr{P}^*(G)})}>1$. If $H$ is non-cyclic, then by Proposition~\ref{p^m}, $\gamma ({\overline{\mathscr{P}^*(H)}})>1$ and $\overline{\gamma} {(\overline{\mathscr{P}^*(H)})}>1$, so $\gamma ({\overline{\mathscr{P}^*(G)}})>1$ and $\overline{\gamma} {(\overline{\mathscr{P}^*(G)})}>1$.
	
	(ii) If $q\geq 5 $, then $\overline{\mathscr{P}^*(G)}$ contains subgroups of order $2^{n-1}$ and $q$, let them be $H_1$ and $H_2$ respectively. Then each element in $H_1$ is not a power of any element in $H_2$, and vice versa. This implies that $\overline{\mathscr{P}^*(G)}$ contains $K_{7,3}$ as a subgraph and so $\gamma ({\overline{\mathscr{P}^*(G)}})>1$ and $\overline{\gamma} {(\overline{\mathscr{P}^*(G)})}>1$.
	
	
	\noindent\textbf{Subcase 3b(ii).} If $p\geq 3$, then $G$ contains a subgroup of order $p^n$, let it be $H$. If $H$ is non-cyclic, then by Theorem~\ref{p^m}, $\gamma ({\overline{\mathscr{P}^*(H)}})>1$ and $\overline{\gamma} {(\overline{\mathscr{P}^*(H)})}>1$, so $\gamma ({\overline{\mathscr{P}^*(G)}})>1$ and $\overline{\gamma} {(\overline{\mathscr{P}^*(G)})}>1$. If $H$ is cyclic, then the elements not in $H$ are adjacent to all the elements in $H$ of order $p^n$. It follows that $\overline{\mathscr{P}^*(G)}$ contains $K_{7,3}$ as a subgraph and so $\gamma ({\overline{\mathscr{P}^*(G)}})>1$ and $\overline{\gamma} {(\overline{\mathscr{P}^*(G)})}>1$.
\end{proof}

Proof of Theorem~\ref{planar} follows by combining all the propositions proved so far in this section.
\begin{proof}[Proof of Corollary~\ref{cor1}]
	Note that, if $\overline{\mathscr{P}^*(G)}$ is one of star, path, $C_n$, outerplanar, and not containing $K_{1,4}$ or $K_{2,3}$, then $\overline{\mathscr{P}^*(G)}$ must be planar. So to classify the finite groups whose complement of proper power graphs is one of these, it is enough to consider the finite group whose complement of proper power graph is planar. It is easy to check each of such possibilities among the list of groups given in Theorem~\ref{planar}(1), and their corresponding complement of proper power graph structure given in (\ref{p^n}), (\ref{pq}), (\ref{zpp}), (\ref{z22})  and Figures \ref{Z12}, \ref{Q8}, \ref{S3}. This completes the proof.	
\end{proof}

\section*{Acknowledgment}
This research work of the first author is supported  by Ministry of Social Justice $\&$ Empowerment and Ministry of Tribal Affairs, India in the form of  Rajiv Gandhi National Fellowship.

\end{document}